\documentclass{article}
\usepackage{parskip}
\usepackage{blkarray}
\usepackage{multirow}
\usepackage[pdftex]{graphicx}
\usepackage{epstopdf}
\usepackage{amssymb}
\usepackage{amsmath}
\usepackage{amsthm}
\usepackage{amsbsy}
\usepackage{psfrag}
\usepackage{pstricks}
\usepackage{float}
\usepackage{amsmath,hyperref}
\usepackage{tikz}
\usepackage{microtype}
\usepackage{parskip}
\usepackage{fontenc}
\usepackage{amsmath,amsthm,wasysym,amssymb,mathrsfs}
\usepackage{mathtools,appendix,epsfig,epsf,color,subfigure}
\usepackage{hyperref,pgfplots}
\usepackage{pgfplotstable}
\usepackage{algpseudocode}
\usepackage{setspace,relsize,needspace,etoolbox}
\usepackage[section]{placeins}
\usepackage{booktabs}
\makeatletter
\preto{\@verbatim}{\topsep=-1.5pt \partopsep=-1pt }
\makeatother
\usepackage[morefloats=7]{morefloats}
\theoremstyle{plain}
\newtheorem{theorem}{Theorem}[section]
\newtheorem{lemma}[theorem]{Lemma}
\newtheorem{corollary}[theorem]{Corollary}
\newtheorem{example}{Example}
\theoremstyle{remark}
\newtheorem{remark}[theorem]{Remark}
\newtheorem{algorithm}[theorem]{Algorithm}
\theoremstyle{definition}
\newtheorem{definition}{Definition}[section]

\title{ A novel parallel approach for solving some free boundary value problems}
\author{Peeyush Singh$^1$ and Amboru Yalamanda$^2$\\
  \small \noindent $^1$,Department of Mathematics and Statistics,VIT-AP University,AP, India.\\
  \small \noindent $^1$E-mails: peeyush.singh@vitap.ac.in,peeyushs8@gmail.com\\
\small \noindent $^2$ Department of Mathematics and Statistics,VIT-AP University,AP, India.\\
\small \noindent $^2$E-mails: amboru.24phd7073@vitap.ac.in,susindra.amboru@gmail.com 
}
\date{}
\begin{document}

\maketitle

\begin{abstract}
This paper introduces a novel, efficient class of parallel direct and indirect iterative schemes to solve general obstacle and free boundary value problems.
The uniqueness of the solution for the direct parallel method is established under the assumption that the model problem yields an $M$-matrix. The convergence analysis of the indirect approach is predicated on minimizing the discrete energy functional at each iterative stage of the linear approximation process.
Under the aforementioned setting, we establish theoretical convergence results for both smooth and nonsmooth energy functionals defined over a convex set, in the sense of Ferris and Mangasarian \cite{ferris1994parallel}.
For the numerical computation of the one-dimensional obstacle problem, we adopt a direct generalized parallel approach based on the SPIKE algorithm \cite{polizzi2006parallel}. To accelerate convergence and reduce computational complexity, a fast recursive version of the scheme is generalized to the nonsmooth case. For two- and three-dimensional obstacle problems, we employ a directional splitting method that treats each directional subproblem as a one-dimensional obstacle minimization problem. By framing the energy functional minimization as a parabolic time-dependent problem and utilizing an Armijo time-stepping rule during the solution update process, we successfully obtain results for higher-dimensional obstacles. Additionally, this study explores the possibility of extending the algorithm into a constrained quadratic programming optimization solver. We also evaluate the framework on image deblurring phenomena under the aforementioned setting, successfully recovering the original images. Finally, numerical illustrations are provided to validate the theoretical results.
\end{abstract}
\noindent \textbf{MSC2020 Subject Classification:} 65K15, 90C33, 65Y05, 49J40 \\
\noindent \textbf{ACM CCS Concepts:} Computing methodologies~Parallel algorithms; Mathematics of computing Mathematical software

\section{Introduction}
In a free boundary problem (FBP), both the partial differential equation and its unknown domain boundary must be solved simultaneously. Such problems frequently arise in physics and engineering, particularly in modeling phase transition phenomena like the melting of ice into water or in tribology problem (see for example \cite{peeyush2020}) like generation of caviation phenomenon in two moving contacts. The “free” boundary represents the moving interface between distinct phases or regions, such as the liquid-solid interface (or liquid-gas interface). Another application of free boundary problems arises in financial mathematics specifically in American-style option pricing, where the option value is modeled using an evolutionary variational inequality as well as in game theory as a Nash equilibrium. The brief review and current development in the field is as follows.
A rigorous theoretical framework of the obstacle problem originated with Stampacchia and coworkers \cite{stampacchia1964, lions1967variational}. Later, Lions \cite{lions1969quelques} derived approximation results for the model problem using monotone operators and penalty methods. The regularity result for the model was first proved by Caffarelli \cite{caffarelli1977regularity}, who provided a deep analytical understanding of the model.
Subsequent numerical developments included finite element discretizations and error 
estimates \cite{brezzi1977error}, augmented Lagrangian and penalty approaches 
\cite{glowinski1984numerical}, and variational discretization techniques 
\cite{nitsche1977variational}. Furthermore, Br{\'e}zis and Sibony 
\cite{brezis1968equivalence} proposed an iterative scheme for monotone operators.
Subsequently, Scholz \cite{scholz1984numerical, scholz1986optimal} implemented this 
scheme within a penalty finite element approximation framework to derive sharp 
a priori error bounds.
Conforming finite element methods (FEM) are standard for obstacle problems. While 
linear ($\mathcal{P}_1$) elements offer reliability, accuracy near free boundaries 
improves with quadratic ($\mathcal{P}_2$) variants in 2D \cite{bartels2015} and 3D. 
A posteriori error estimation and adaptive refinement advanced via localized 
estimators \cite{veeser2001}, pointwise error control and barrier sets 
\cite{nochetto2003}, and optimal convergence proofs for adaptive FEM \cite{braess2007}. 
Parallel developments in Discontinuous Galerkin (DG-FEM) methods yielded optimal 
a priori and a posteriori bounds \cite{gudi2014,weiss2010,ayuso2021}, alongside 
bubble-enriched $\mathcal{P}_2$ implementations in 3D \cite{gaddam2017}.
Parallel to these developments, several independent frameworks leveraging optimization 
theory have been proposed for solving obstacle problems. For instance, Friedlander 
and Tseng \cite{friedlander2007exact} introduced the exact regularization of convex 
programs, while Goldstein and Osher \cite{goldstein2009split} developed the split 
Bregman method for $L^{1}$-regularized problems. In the context of compressed sensing 
and $L^{1}$ optimization, Cai et al. \cite{cai2009linearized} applied linearized 
Bregman iterations, building upon the foundational advancements made by Cand{\`e}s 
and Donoho \cite{candes2006robust}. Additionally, Chambolle and Pock 
\cite{chambolle2011first} introduced highly efficient primal-dual algorithms. More 
recently, the dynamical functional particle method (DFPM) was developed to reformulate 
obstacle problems as first-order dynamical systems \cite{edvardsson2020dynamical}, 
alongside various least-squares finite element methods (LSFEM) \cite{bochev2009least}.
Iterative solvers became essential for practical computation. Hinterm{\"u}ller, Ito, and Kunisch \cite{hintermuller2003primal} introduced the primal-dual active set method, demonstrating its equivalence to a specific semismooth Newton-type algorithm.
Wang \cite{wang2002finite} provided finite element error bounds for quadratic 
elements under strong regularity assumptions, whereas Wang et al. 
\cite{wang2018two} proposed a two-level finite element algorithm that refines 
solutions near the free boundaries to achieve nearly optimal error bounds. 
Various domain decomposition methods were explored in the literature for solving the obstacle problem on parallel computers \cite{singh2020total}. Basic principal of these decomposition based on partitioning of global spatial domain into subdomains assigned to separate processors via MPI. Projected successive relaxation(PSOR) is well known iterative method for solving the obsatcle problem. Extending these idea on parallel computer  many people used PSOR via Red-Black ordering. These ordering breaks data dependencies enabling concurrent cell updates within decoupled subsets.
Badaya used Schwarz domain decomposition methods and solve localized obstacle problems, while primal-dual active set (PDAS) strategies reduce inactive regions to standard linear systems for faster solving. Additionally, he proved a convergence of a domain decomposition algorithm and provide convergence rate which based on minimizing quadratic functionals in Hilbert spaces. Various one and two level domain decompostion methods have been studied last couple of decades((for more review in this work see 
\cite{smith2004domain, toselli2005domain, quarteroni1999domain}). In 2003 Badaya and coworkers \cite{badea2003convergence} gave first explicit convergence rate estimates for one and two level Schwarz method for variational inequalities. Apart from these developments, some authors employed multi-grid methods for solving the model in the form of the full approximate scheme (FAS) \cite{brandt1977multi} 
or projected algebraic multigrid (PAMG) \cite{reisinger2007projected} to achieve optimal $\mathcal{O}(N)$ algorithmic complexity. In this process spatial domain decomposition scales operations across grid levels, but encounters communication bottlenecks at very coarse hierarchies.
Few authors \cite{he2009parallel, tao2012inexact} introduced splitting augmented Lagrangian methods for structured monotone variational inequalities with operators composed of two or three separable parts. The primary benefit of this method is that followers can execute their individual subproblems concurrently.
%
Recently, several authors \cite{glowinski2016alternating} have utilized operator splitting 
via the alternating direction method of multipliers (ADMM) to reformulate the 
elliptic state as a pseudo-transient parabolic problem.
They applied the splitting frameworks (like ADMM) decouple complex spatial dimensions into independent, explicit directional sweeps, eliminating global linear system updates and matching GPU multithreading models perfectly.
A few authors have also endeavored to obtain a discretized solution for the continuous variational inequality that minimizes energy over an admissible set , as well as for its discretized algebraic complementarity system.
This is realized by distributing parts of the stiffness matrix across concurrent parallel processing units. 
The current work is higly motivated from the work of A. Sameh and coworkers \cite{Spring2020SPIKE,polizzi2006parallel} on the develpment of efficient banded solvers SPIKE algorithm. The present study is adopted here due to its exceptional ability to solve large-scale banded linear systems in parallel while balancing communication overhead and computational complexicity cost. The SPIKE algorithm serves as a powerful high-performance computing (HPC) alternative to traditional sequential factorization such as LU decomposition which struggle with scalability issue on modern multiprocessor systems due to their sequential dependencies. These major communication bottlenecks can be easily handle through divide and conquer domain decomposition strategy. The solver offers superior high-performance computing scalability, minimal communication costs, massive speedup, and exceptional algorithmic flexibility. However, its application to constraint optimization has not yet been explored.
The general obstacle problem is a classical problem in the mathematical theory of partial differential equations (PDEs) and variational inequalities. It models physical situations, such as an elastic membrane clamped at the boundary and pushed from below by a rigid obstacle. However, the application of SPIKE dense banded solvers to constraint optimization such as these obstacle type problems has not yet been explored. These solvers have not been extensively investigated for constraint minimization problems, which opens an active research direction to reexamine the method and analyze its development as a commercial optimization software tool.
\subsection{The classical obstacle model }
Let $\Omega$ be a bounded domain in $\mathbb{R}^n$ ($n \in \{1, 2, 3\}$) with a smooth boundary $\partial\Omega$. Given the pde model parameters consisting of an obstacle $\psi_{ob} \in L^2(\Omega)$, a boundary profile $g \in H^{1/2}(\partial\Omega)$, and a forcing term $f \in L^2(\Omega)$, we define the set of admissible functions in the form of
\begin{equation}
    \mathcal{K} = \{ v \in H^1(\Omega) : v|_{\partial\Omega} = g \text{ and } v \geq \psi_{ob} \text{ a.e. in } \Omega \}.
    \label{eq:1}
\end{equation}
To solve the obstacle problem, we minimize an energy functional over the admissible set.
We seek a solution $u \in K$ that satisfies
\begin{equation}
     \min_{v \in \mathcal{K}}\mathcal{J}(v):=\int_{\Omega} \left( \frac{1}{2} |\nabla v|^2 - f v \right) dx
     \label{eq:2}
\end{equation}
Alternatively, this is equivalent to the \textbf{variational inequality} formulation: find a solution $u \in \mathcal{K}$ such that for all $v \in \mathcal{K}$:
\begin{equation}
    \int_{\Omega} \nabla u \cdot \nabla(v - u) dx \geq \int_{\Omega} f (v - u) dx
    \label{eq:3}
\end{equation}
Once we enforce regularizition conditions across both domain then it allows the obstacle problem to be stated in its pointwise complementarity form (KKT conditions).
A solution $u$ naturally partitions the domain $\Omega$ into a contact zone (or coincidence set) $\Omega_C = \{ x \in \Omega : u(x) = \psi_{ob}(x) \}$ and a non-contact zone $\Omega_N = \{ x \in \Omega : u(x) > \psi_{ob}(x) \}$.\\
The complementarity form states that solution $u$ must satisfy the following conditions almost everywhere in $\Omega$
\begin{align}
    u &\geq \psi_{ob} \label{eq:4} \\
    -\Delta u - f &\geq 0 \label{eq:5} \\
    (u - \psi_{ob}).(-\Delta u - f) &= 0 \label{eq:6} \\
    u &= g \quad \text{ on } \partial\Omega \label{eq:7}
\end{align}
To solve the obstacle problem numerically, we discretize the domain $\Omega$ using a mesh or grid (e.g., using Finite Difference or Finite Element methods). Let $N$ be the number of interior grid points. The discrete equivalent to minimizing the energy functional turns out a constrained \textbf{Quadratic Programming (QP)} problem
\begin{equation}
    \min_{\mathbf{u} \in \mathbb{R}^N} \left( \frac{1}{2} \mathbf{u}^T A \mathbf{u} - \mathbf{f}^T \mathbf{u} \right) \quad \text{subject to constrained } \mathbf{u} \geq \boldsymbol{\psi_{ob}},
    \label{eq:8}
\end{equation}
where $\mathbf{u}, \boldsymbol{\psi} \in \mathbb{R}^N$ are unknown solution and known obstacle respectively.
Using the discrete Karush-Kuhn-Tucker (KKT) optimization conditions, the matrix problem reformulated to the continuous complementary form. We seek a solution $\mathbf{u} \in \mathbb{R}^N$ such that
\begin{align}
    \mathbf{u} - \boldsymbol{\psi_{ob}} &\geq \mathbf{0} \label{eq:9} \\
    A\mathbf{u} - \mathbf{f} &\geq \mathbf{0} \label{eq:10}\\
    (\mathbf{u} - \boldsymbol{\psi_{ob}}).(A\mathbf{u} - \mathbf{f}) &= 0 \label{eq:11}
\end{align}
In this curreent work, we consider a parallel algorithm approach for a class of obstacle/free boundary value problems. 
The reminder paper is orgnized as follow: Section 2 present a mathematical background and an overview of the parallel algorithm setup. Section 3 discuss some equivalent formulation, its mathematical justification using $M$-matrix assumptions and novel formulation of projected parallel algorithms(direct and indirect approach) of the model. Section 4 contains generalized discussion on convergence analysis of parallel algorithms.
Section 5 elaborate numerical findings and applications of the algorithm. The final section (Section 6) wraps up the study, restates the main findings, and may suggest limitations and directions for future work.
\section{Mathematical distributive description}
Let $\mathcal{V}$ be a Banach space that is reflexive and let $\mathcal{J}$ be  a convex functional defined as $\mathcal{J}:\mathcal{V} \rightarrow \mathbb{R}$. We examine the 
optimization problem of the form
\begin{align}
   \min_{v \in \mathcal{K}}\mathcal{J}(v):=\int_{\Omega} \left( \frac{1}{2} |\nabla v|^2 - f v \right) dx, \quad \mathcal{K} \subset \mathcal{V},
   \label{eq:12}
\end{align}
where $\mathcal{K}$ denotes a closed convex subset of $\mathcal{V}$.
In this section, we focuses on the case in which the space $\mathcal{V}$ can be decomposed into a direct sum of subspaces $\mathcal{V}_{i}$, in other words,
\begin{align}
 \mathcal{V}=\mathcal{V}_{1}\oplus \mathcal{V}_{2}\oplus--\oplus \mathcal{V}_{r}=\sum_{i=1}^{r} \mathcal{V}_{i}.
 \label{eq:13}
\end{align}
This means that for any $v \in \mathcal{V}$, there exists a unique 
$v_{i} \in \mathcal{V}_{i}$ such that
\begin{align}
 v= \sum_{i=1}^{r}v_{i}
 \label{eq:14}
\end{align}
We solve Eqn~\eqref{eq:5} by decomposing $\mathcal{K}$ into sum of $\mathcal{K}_{i} \subset \mathcal{V}_{i}, i=1,2,..,r$, i.e.,
\begin{align}
 \mathcal{K}=\mathcal{K}_{1}\oplus \mathcal{K}_{2}\oplus--\oplus \mathcal{K}_{r}=\sum_{i=1}^{r} \mathcal{K}_{i},
 \label{eq:15}
\end{align}
where each $\mathcal{K}_{i} $ can be written as 
\begin{align}
 \mathcal{K}_{i} = \mathcal{K}_{i,in} \oplus \mathcal{K}_{i,mid} \oplus \mathcal{K}_{i,out}.
 \label{eq:16}
\end{align}
Then the solution of Eqn~\eqref{eq:5} can be evaluated by solving subparts of the minimization problem as the part of these component sets of each 
$\mathcal{K}_{i}$, where $i=1,2,..,r.$ in sequentially or parallel.

In other words, we can reformulate the minimization problem equivalent two minimization problems namely
\begin{align}
 P_{1}:\textit{ Find } u \in \sum_{i=1}^{r} (\mathcal{K}_{i,in} \oplus \mathcal{K}_{i,out})  \textit{ such that } \mathcal{J}_{1}(u)= \min_{v \in \sum_{i=1}^{r} (\mathcal{K}_{i,in} \oplus \mathcal{K}_{i,out})} \mathcal{J}(v)
 \label{eq:17}
\end{align}
and
\begin{align}
 P_{2}: \textit{ Find } u \in \sum_{i=1}^{r} \mathcal{K}_{i,mid}  \textit{ such that } \mathcal{J}_{2}(u)= \min_{v \in \sum_{i=1}^{r} \mathcal{K}_{i,mid} } \mathcal{J}(v).
 \label{eq:18}
\end{align}
\section{The projected parallel algorithm}

%
%
%
%
%
This work focuses on solving equations ~\eqref{eq:9}--~\eqref{eq:11} on parallel processing systems. Before we solve the parallel problem, we set up the linear complemeatrity problem (LCP) as $r$ blocks of matrices stored across $r$ processors. We then subdivide the complemeatrity system along the main diagonal into $r$ linear sub complementarity problems. Each block has a size of $n$ such that $N=nr$, where $r$ is the total number of available processors.
A block substructure $A_{i}, (i=1,2,…,r)$ is constructed here for the original linear complemenatrity system ~\eqref{eq:9}--~\eqref{eq:11} to create $r$ linear complemeatrity subsystem. The linear complementarity problem Eqn~~\eqref{eq:1} is partitioned into
	\begin{equation}
		\mathbf{C}_{i} \mathbf{x}_{i-1} + \mathbf{A}_{i} \mathbf{x}_{i} + \mathbf{B}_{i} \mathbf{x}_{i+1} \geq \mathbf{f}_{i}, \quad i = 1, 2, \dots, r
		\label{eq:19}
	\end{equation}
	\begin{equation}
		\mathbf{x}_{i} \geq \mathbf{0}
		\label{eq:20}
	\end{equation}
	\begin{equation}
		\mathbf{x}_{i}. \left( \mathbf{C}_{i} \mathbf{x}_{i-1} + \mathbf{A}_{i} \mathbf{x}_{i} + \mathbf{B}_{i} \mathbf{x}_{i+1} - \mathbf{f}_{i} \right) = 0	
		\label{eq:21}
	\end{equation}
,where $\mathbf{A}_{i}$ is the $n \times n$ block diagonal coefficient matrix of each partition. $\mathbf{x}_{i}$ and $\mathbf{f}_{i}$ are  $n \times 1$ vectors and 
$\mathbf{C}_{i}$,$\mathbf{B}_{i}$ are accompanied left and right offdiagonal blocks of the size $n \times n$. Note that 
	\begin{align}
		\mathbf{C}_{1} &= \mathcal{O}_{n \times n}, \quad \mathbf{B}_{r} = \mathcal{O}_{n \times n} ; \label{eq:22}\\
		\mathbf{x}_{0}= \mathcal{O}_{n \times 1}, \quad \mathbf{x}_{r+1} = \mathcal{O}_{n \times 1};
		\label{eq:23}
	\end{align}
	\vspace{1em}
	\[
	\mathbf{C}_{i} = 
	\begin{bmatrix}
		0  & \hat{C}_{i} \\
		0 & 0
	\end{bmatrix}_{n \times n}, \quad
	\mathbf{B}_{i} = 
	\begin{bmatrix}
		0 & 0 \\
		\hat{B}_{i} & 0
	\end{bmatrix}_{n \times n}
	\]
	\vspace{1em}
	\[
	\mathbf{x}_{i} = \begin{bmatrix} x_{i,1}, \dots, x_{i,n} \end{bmatrix}^T, \quad
	\mathbf{f}_{i} = \begin{bmatrix} f_{i,1}, \dots, f_{i,n} \end{bmatrix}^T
	\]
	\vspace{1em}
, where $B_{i}$ is an upper triangular matrix and $C_{i}$ is a lower triangular matrix.
For each partition $r$, Eqn.~\eqref{eq:2} can be reformulated as:
	\begin{equation}
		\mathbf{A}_{i} \mathbf{x}_{i} \geq \mathbf{f}_{i} - 
		\begin{bmatrix}
			\mathbf{C}_{i} \mathbf{x}_{i-1} \\
			0 \\
			\vdots \\
			0 \\
			\mathbf{B}_{i} \mathbf{x}_{i+1}
		\end{bmatrix}_{n \times 1}
		:= \mathbf{f}_{*i}, \quad i = 1, \dots, r 
		\label{eq:24}
		\end{equation}
\begin{equation}
		\mathbf{x}_{i} \geq \mathbf{0} 
		\label{eq:25}
\end{equation}
\begin{equation}
    \mathbf{x}_{i}. \left( \mathbf{A}_{i} \mathbf{x}_{i} - \mathbf{f}_{*i} \right) = 0 
    \label{eq:26}
	\end{equation}
Now, if each matrix $\mathbf{A}_{i}$ is invertible, then the partitioned 
linear complementarity systems (LCS)~\eqref{eq:24}--~\eqref{eq:26} completely decouple, except for the top and bottom $m$ blocks. This decoupling makes the system 
well-suited for solving the LCS on concurrent processors independently, 
inspiring a novel decomposition method for the original matrix~$\mathbf{A}$.
Our main idea in this article is to provide a novel, equivalent formulation 
of minimizing functionals based on a monotonic linear transformation of the 
original problem. In other words, we will provide two equivalent 
functionals, $J_{1}(u)$ and $J_{2}(u)$, which share the same set of 
minimizers as $J(u)$ (i.e., the same function $u$ minimizes both 
functionals).
To support the convergence analysis of our main algorithm, we introduce 
essential preliminary results.
\begin{theorem}
 Let $A$ be a $n \times n$ monotone matrix whose off diagonal entries are non positive. If for a given vector $x_{1}$ with the property $Ax_{1} \ge b$, then $x_{1} \ge A^{-1}b $.
\end{theorem}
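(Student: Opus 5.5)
The plan is to use the definition of a monotone matrix directly: $A$ is monotone if $Ay \ge 0$ implies $y \ge 0$ componentwise. Equivalently, $A$ is nonsingular and $A^{-1} \ge 0$ entrywise. The off-diagonal sign condition makes $A$ a Z-matrix, hence an $M$-matrix, so the inverse is well defined and nonnegative. I would first record this equivalence briefly, since only one direction is needed.

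\emph{Invertibility.} If $Ay = 0$, then both $Ay \ge 0$ and $A(-y) \ge 0$. Monotonicity gives $y \ge 0$ and $-y \ge 0$, so $y = 0$. Hence $A^{-1}$ exists.

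\emph{Nonnegativity of $A^{-1}$.} For each unit vector $e_j$, set $y = A^{-1}e_j$. Then $Ay = e_j \ge 0$, so $y \ge 0$. Thus every column of $A^{-1}$ is nonnegative.

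With this in hand, the main step is short. Set $x^{*} = A^{-1}b$ and $y = x_{1} - x^{*}$. Then
\begin{equation*}
Ay = Ax_{1} - b \ge 0 .
\end{equation*}
Either apply monotonicity to $y$ directly, or write $y = A^{-1}(Ax_{1} - b)$, which is a product of a nonnegative matrix and a nonnegative vector and so is nonnegative. In both cases $y \ge 0$, i.e. $x_{1} \ge A^{-1}b$.

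There is no real obstacle here. The only point needing care is the definition of ``monotone''. If it is taken as $A^{-1} \ge 0$, the argument is the one-line computation above. If it is taken in Collatz's sense ($Ay \ge 0 \Rightarrow y \ge 0$), invertibility follows as shown. The sign condition on the off-diagonal entries is not needed for the inequality itself. I would remark that it only serves to identify $A$ as an $M$-matrix, in line with the paper's later uniqueness arguments for the partitioned systems~\eqref{eq:24}--\eqref{eq:26}. In that setting, each diagonal block $\mathbf{A}_{i}$ inherits the monotone $M$-matrix property, because principal submatrices of an $M$-matrix are $M$-matrices.
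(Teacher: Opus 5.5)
Your proof is correct. The paper gives no argument of its own: it only declares the result a special case of a theorem in Varga. Your write-up is therefore a self-contained replacement for that citation rather than a different strategy. Deriving nonsingularity and $A^{-1}\ge 0$ from Collatz's definition, and then applying monotonicity to $x_{1}-A^{-1}b$ via $A(x_{1}-A^{-1}b)=Ax_{1}-b\ge 0$, is the standard comparison argument on which the cited $M$-matrix result rests. Your remark that the nonpositive off-diagonal hypothesis is superfluous is also accurate under either standard meaning of ``monotone'' (Collatz's, or $A^{-1}\ge 0$). That hypothesis only makes $A$ a nonsingular $M$-matrix in Varga's sense, which is how the paper uses it later. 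It would become essential only under a different reading of ``monotone'', such as positivity of $x^{T}Ax$. In that case you would need the additional fact that a positive stable $Z$-matrix is an $M$-matrix, but that reading is not the one intended here.
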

\begin{proof}
 The proof of this result is a special case of the theorem shown in Varga \cite{varga2009matrix}.
\end{proof}
\begin{theorem}
 Let $A$ be a $n \times n$ nonsingular monotone matrix whose off diagonal entries are non positive. Then solving equation ~\eqref{eq:22} is equivalent solving the complemeatrity system of the form
 	\begin{equation}
	   	\begin{cases}
   		SX \geq G \textit { such that } DG=F \\
   		X \geq 0 \\
   		(SX-G)X=0
   	\end{cases}
   	\label{eq:27}
   \end{equation} 
,where matrix $D$ is a collection of  $r$-partioned $A_{i}$ diagonal blocks (see Figure 1), in other words $$D=diag(A_{1},.., A_{r})$$
and 
matrix $S$ is a collection of $r$-partioned $I_{i}$ identity matrices along the diagonal block
$$\tilde{I}=diag(I_{1},.., I_{r})$$ 
corresponding accompanied left and right blocks namely $V_i$ and $W_i$ are computed approximately from system of equation given below (see Figure 2)
\begin{equation}
  A_i[V_i,W_i]=
   \begin{bmatrix}
   	0 & C_i \\
   	. & 0   \\
   	. & .  \\
   	. & .  \\
   	0 & .  \\
   	B_i & 0  \\
   \end{bmatrix}. 
   \label{eq:28}
\end{equation}
\end{theorem}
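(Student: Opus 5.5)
The plan is to read the statement as the SPIKE factorization $A = DS$ applied to the complementarity system \eqref{eq:9}--\eqref{eq:11}, written in the shifted variable $X = \mathbf{u}-\boldsymbol{\psi_{ob}}$ with right-hand side $F=\mathbf{f}-A\boldsymbol{\psi_{ob}}$. Here $D=\mathrm{diag}(A_1,\dots,A_r)$. The spike matrix $S$ has identity diagonal blocks, and its off-diagonal blocks are $V_i, W_i$, which we define exactly (not approximately) by \eqref{eq:28}, that is, $V_i = A_i^{-1}[\,\cdot\,]$ built from the coupling blocks $B_i$ and $C_i$.

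\textbf{Step 1: the factorization $A = DS$.} Each diagonal block $A_i$ is a principal submatrix of $A$. It is therefore again a $Z$-matrix, and a principal submatrix of a nonsingular monotone $Z$-matrix (that is, an $M$-matrix) is again an $M$-matrix. Hence every $A_i$ is invertible with $A_i^{-1}\ge 0$, and the spikes are well defined. A direct block multiplication gives $A = DS$. Consequently $S = D^{-1}A$ is nonsingular, and $G := D^{-1}F$ is the unique solution of $DG=F$.

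\textbf{Step 2: equivalence of the inequalities and complementarity.} The original conditions read $X\ge0$, $AX-F\ge0$ and $X\cdot(AX-F)=0$. We have $AX-F = D(SX-G)$. Since $D^{-1}\ge 0$ and $D\ge$ in the appropriate sense, the key point is whether $D(SX-G)\ge 0 \iff SX-G\ge0$. The direction ``$\Leftarrow$'' fails in general for $D$ with nonpositive off-diagonal entries, so we cannot simply multiply through by $D$. Instead, we argue through uniqueness of solutions.

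\begin{enumerate}
\item The original LCP with an $M$-matrix $A$ has a unique solution $X^\ast$. One can check that $X^\ast$ satisfies \eqref{eq:27} componentwise on each block: on the inactive set $SX^\ast-G$ vanishes, and on the active set $X^\ast=0$.
\item The cleaner route is to exploit the blockwise structure \eqref{eq:24}--\eqref{eq:26}. For fixed neighbours $x_{i\pm1}$, the $i$-th block is itself an LCP with the $M$-matrix $A_i$. Multiplying by $A_i^{-1}$, it becomes $x_i + V_i x_{i+1}' + W_i x_{i-1}' \gtrless g_i$, where the primes denote the relevant top and bottom entries. This is precisely the $i$-th block row of \eqref{eq:27}.
\item Equivalence then reduces to showing that a blockwise LCP $A_i y \ge h$, $y\ge0$, $y\cdot(A_iy-h)=0$ has the same solution as the block row of \eqref{eq:27}. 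We prove this via the fixed-point characterization $y=\max(0, y-\omega(A_iy-h))$ together with the first theorem above (the Varga comparison result), which gives minimality: the solution is the least element of the feasible set $\{y\ge0,\ A_iy\ge h\}$.
\end{enumerate}

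\textbf{Main obstacle.} The hardest step is exactly the failure of order-preservation under $D$ noted in Step 2. Positivity of $D^{-1}$ gives only one direction, so the complementarity conditions are not obviously invariant under premultiplication by $D^{-1}$. My first attempt will be to use the least-element characterization for both systems:
\begin{itemize}
\item $A$ is an $M$-matrix, so the LCP solution is the least element of the feasible set $\{X\ge0,\ AX\ge F\}$.
\item Because $S$ is block-diagonal-dominant, with identity blocks and spikes coupling only the top and bottom $m$ rows, one would show that $S$ is also a $Z$-matrix, or at least a hidden $Z$-matrix, with a feasible set that has the same least element.
\end{itemize}
If $S$ fails to be a $Z$-matrix, the fallback is to impose the extra hypothesis that the spikes have the sign pattern making $S$ an $M$-matrix. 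This holds when $B_i, C_i\le0$, since then $V_i,W_i = A_i^{-1}(\text{nonpositive}) \le 0$. Both LCPs are then uniquely solvable, and the solution $X^\ast$ from the first item above satisfies both, which establishes the equivalence.
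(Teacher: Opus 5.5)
\textbf{There is a genuine gap, and the statement as written is false.}

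Your Step~1 is sound. $A=DS$ holds exactly, and each $A_i$ is an $M$-matrix. $S$ is even automatically an $M$-matrix: its off-diagonal blocks are $A_i^{-1}$ times the nonpositive coupling blocks, and $Sx=D^{-1}Ax>0$ whenever $x>0$ and $Ax>0$. So your fallback hypothesis costs nothing.

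The gap is item~1 of Step~2, and everything else, including the fallback, rests on it. On the inactive set $\{j: X^\ast_j>0\}$ you only know $(AX^\ast-F)_j=0$. But $(SX^\ast-G)_j=\bigl(D^{-1}(AX^\ast-F)\bigr)_j$, and $D^{-1}=\mathrm{diag}(A_1^{-1},\dots,A_r^{-1})$ has nonnegative, typically dense, diagonal blocks. Any strictly positive residual at an active index in the same block therefore leaks into component $j$; if $A_i$ is irreducible, then $A_i^{-1}>0$ entrywise and it always does. Items~2--3 repeat the same error blockwise: premultiplying a block complementarity problem by $A_i^{-1}$ transports feasibility in one direction but not complementarity. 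Finally, unique solvability of the original problem and of \eqref{eq:27} separately says nothing about their two solutions coinciding.

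Carried out correctly, your least-element idea refutes the claim. Since $D^{-1}\ge0$, $AX\ge F$ implies $SX\ge G$, so $\{X\ge0:AX\ge F\}\subseteq\{X\ge0:SX\ge G\}$. Both are $Z$-matrix problems whose solutions are the least elements of these sets. Hence the solution of \eqref{eq:27} is componentwise $\le$ the solution of the original problem, and in general strictly smaller.

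A concrete instance: take $r=2$, $A=\mathrm{diag}(A_1,A_1)$ with $A_1=\begin{pmatrix}2&-1\\-1&2\end{pmatrix}$, and $F=(1,-2,1,-2)^T$. The spikes vanish, so $S=I$ and $G=D^{-1}F=(0,-1,0,-1)^T$. Then \eqref{eq:27} is solved by $X=\max(G,0)=0$, although $AX-F=(-1,2,-1,2)^T\not\ge0$. The true solution is $X^\ast=(\tfrac12,0,\tfrac12,0)^T$. It gives $SX^\ast-G=(\tfrac12,1,\tfrac12,1)^T$ and $X^{\ast T}(SX^\ast-G)=\tfrac12\ne0$. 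So no argument can close this gap.

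The paper's own proof simply invokes Theorem~3.1. That yields only the one-sided feasibility implication $AX\ge F\Rightarrow SX\ge G$, so it has exactly the defect you flagged. What remains true is only the original system rewritten via $AX-F=D(SX-G)$, or equivalently the block problems \eqref{eq:24}--\eqref{eq:26} with the matrices $A_i$. The identity-block system \eqref{eq:27} is not equivalent.
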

\begin{figure}[H]
    \centering
    \begin{minipage}{0.5\textwidth}
        \centering
        \includegraphics[width=\textwidth,, keepaspectratio]{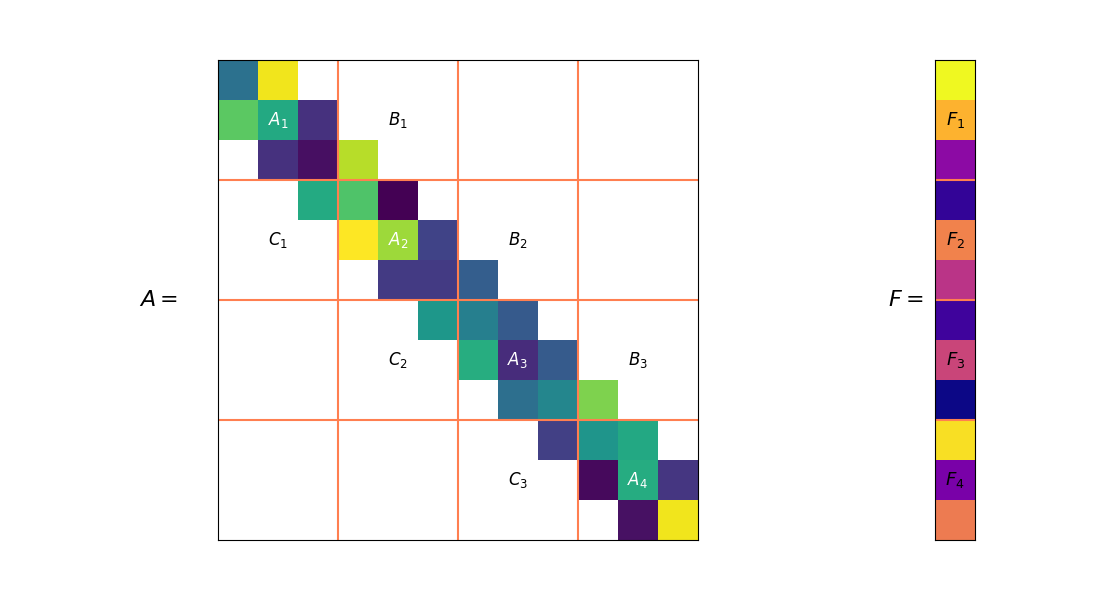}
    \end{minipage}%
    \hfill
    \begin{minipage}{0.5\textwidth}
        \centering
        \includegraphics[width=\textwidth,keepaspectratio]{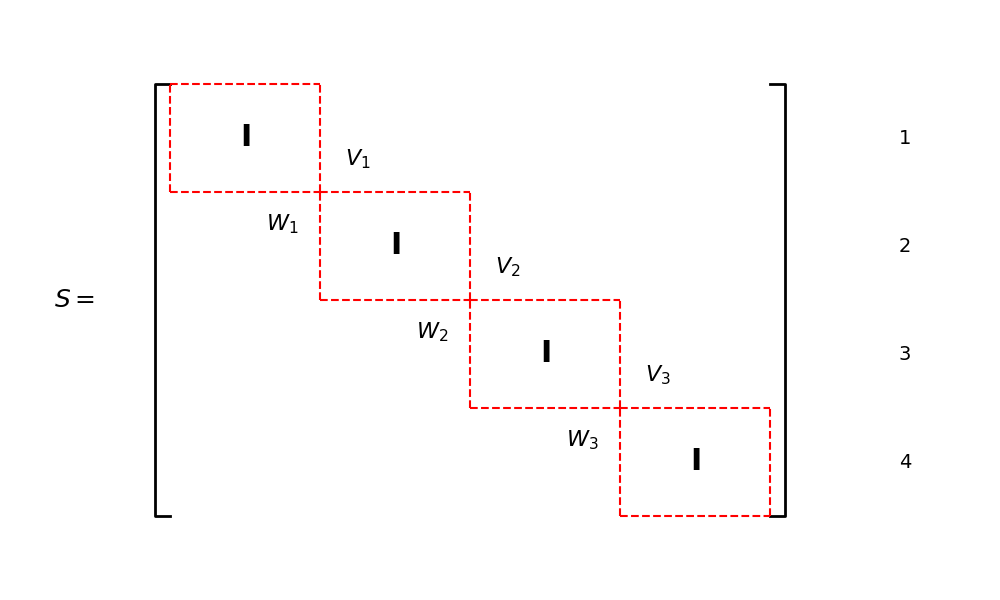}
    \end{minipage}
    \caption{(a)Partitioned matrix $\mathbf{A}$ and right-hand side $\mathbf{F}$ ($r = 4$).
 (b)$S$ matrix structure with $r=4$ partition}
    \label{fig:comparison}
\end{figure}
\begin{proof}
The result can be proven easily once it is established that a monotone matrix leaves inequalities invariant under the decomposition described in the theorem. Importantly, such a construction can be implemented because the monotone matrix preserves the inequality by virtue of Theorem 3.1. Hence, without loss of generality, the original problem can be transformed into the form of ~\eqref{eq:32}.
\end{proof}
Importantly, the solution of the LCP ~\eqref{eq:31} can be decoupled on a parallel computer and equivalently solved by constructing two sets of minimization problems, namely
\begin{itemize}
 \item $P_1$: The first one and its equivalent form are defined as 
\[\min_{x\in \sum_{i=1}^{r}\mathcal{K}_{i,in} \oplus \mathcal{K}_{i,out} \subset \mathcal{K}}  \mathcal{F}_{1}= \dfrac{1}{2}x^{T}\tilde{S}x-\tilde{G}^{T}x \]
$$\Updownarrow$$
   	\begin{equation}
	   	\begin{cases}
   		\tilde{S}\tilde{X} \geq \tilde{G} \\
   		\tilde{X} \geq \mathbf{0} \\
   		(\tilde{S}\tilde{X}-\tilde{G}). \tilde{X}=0
   	\end{cases}
   	\label{eq:29}
   \end{equation}
where $\tilde{S},\tilde{G}$ are constructed as from Eqn~\eqref{eq:32} by considering top $\beta$ rows and bottom $\beta$ rows from each partioned block of $S,G$ matrices with total equation size $n_{rd}=2*r*\beta$.
 \item $P_2$: The second one along with equivalent formulation defined as
\[\min_{x\in \sum_{i=1}^{r}\mathcal{K}_{i,mid} \subset \mathcal{K}} \mathcal{F}_{2}= \dfrac{1}{2}x^{T}\tilde{I}x-\tilde{G^{*}}^{T}x \]
$$\Updownarrow$$
   	\begin{equation}
	   	\begin{cases}
   		\tilde{I_{*}}X^{*} \geq G^{*} \\
   		X^{*} \geq \mathbf{0} \\
   		(\tilde{I_{*}}X^{*}-G^{*}). X^{*}=0
   	\end{cases}
   	\label{eq:30}
   \end{equation} 
\end{itemize}
The solution to the decoupled minimization problems in Equations \eqref{eq:29} and \eqref{eq:30} is addressed using both direct and iterative approaches. This section first delineates the algorithm, followed by a formal mathematical justification of its convergence to the exact solution.
\subsubsection{Direct projection type iterative parallel approach}
We consider the decoupled linear complementarity systems given in \eqref{eq:29} and \eqref{eq:30}, which are analogous to the preprocessing and postprocessing stages of the SPIKE algorithm introduced by Sameh and Polizzi \cite{Spring2020SPIKE,polizzi2006parallel}.
The reduced linear complementarity system (RLCS) is derived by considering the $m$ top and bottom rows from each $r$-partitioned  block of the SPIKE-type matrix shown in Fig. 2. The solution of these decoupled problems can be divided into two stages.
To solve this reduced $\text{LCS}$, we apply an $\text{LU}$ or $\text{UL}$ decomposition combined with a projection step in the final stage of the factorization. 
Once the reduced system is resolved, the remaining solution components associated with Equation \eqref{eq:30} can be decoupled straightforwardly. By applying a similar projection in the final stage, we completely recover the full solution vector.
To establish the unique solvability of the original $\text{LCS}$ problem, we prove the existence of the solution under the assumption that the original matrix $A$ is an $M$-matrix. 
Consequently, each partitioned diagonal block is likewise an $M$-matrix, meaning that the matrix $D$ in the aforementioned decomposition is an $M$-matrix as well. 
Therefore, to ensure a unique solution to the original problem, it is sufficient to establish the following theorem, which asserts the desired uniqueness result.
\begin{lemma}
 Let $A$ be a $M$-matrix and let $A=LU$ and $A=UL$ two decompositions of $A$. Then $L^{-1}\ge 0,U^{-1}\ge 0$.
\end{lemma}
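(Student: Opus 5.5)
We take the standard reading of the lemma: $L$ is unit lower triangular and $U$ upper triangular in $A=LU$, with the analogous roles in $A=UL$. The plan is to show that both triangular factors are themselves $M$-matrices, i.e.\ $Z$-matrices (nonpositive off-diagonal entries) with positive diagonals. For a triangular $Z$-matrix $T$ with positive diagonal, write $T=\Delta(I-N)$, where $\Delta=\mathrm{diag}(T)>0$ and $N=I-\Delta^{-1}T$ is strictly triangular with $N\ge 0$. Then $N$ is nilpotent, so
\[
T^{-1}=\Big(\sum_{k=0}^{n-1}N^{k}\Big)\Delta^{-1}\ge 0 .
\]
Hence everything reduces to a sign pattern for the entries of the factors.

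For $A=LU$ we proceed by induction on $n$ via Gaussian elimination, i.e.\ Schur complements. Write
\[
A=\begin{bmatrix} a_{11} & b^{T}\\ c & A_{22}\end{bmatrix},
\]
with $a_{11}>0$ and $b,c\le 0$, as $A$ is a nonsingular $M$-matrix. The first column of $L$ is $(1,\,c/a_{11})$, whose off-diagonal part is $\le 0$. The first row of $U$ is $(a_{11},\,b^{T})$, again with off-diagonal part $\le 0$. The remaining factors come from the Schur complement $A_{22}-cb^{T}/a_{11}$. Its off-diagonal entries are $\le 0$ because $cb^{T}\ge 0$ is subtracted from nonpositive entries.

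The key step, and the main obstacle, is that this Schur complement is again a nonsingular $M$-matrix. We plan to use the characterization $A^{-1}\ge 0$ (monotonicity, which is consistent with the use of monotone matrices in Theorem 3.1). The inverse of the Schur complement is exactly the trailing $(n-1)\times(n-1)$ block of $A^{-1}$, which is $\ge 0$. A $Z$-matrix with nonnegative inverse is a nonsingular $M$-matrix, so the induction hypothesis applies. This also yields positive pivots, so the diagonals of $U$ (and of $L$, which equals $1$) are positive.

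Therefore $L$ and $U$ are triangular $Z$-matrices with positive diagonals, and the Neumann argument above gives $L^{-1}\ge 0$ and $U^{-1}\ge 0$.

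For $A=UL$ we use the permutation $P$ that reverses the index order. Then $PAP$ is still an $M$-matrix, since it is a symmetric permutation. Moreover $PAP=(PUP)(PLP)$, where $PUP$ is lower triangular and $PLP$ is upper triangular. This is an $LU$ decomposition of $PAP$, so the first case gives $(PUP)^{-1}=PU^{-1}P\ge 0$ and $(PLP)^{-1}=PL^{-1}P\ge 0$. Since permutations preserve entrywise signs, $U^{-1}\ge 0$ and $L^{-1}\ge 0$.

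If the factorization is not normalized to unit diagonal, the factors differ from the normalized ones by a positive diagonal scaling $\Delta$, with $L\to L\Delta$ and $U\to\Delta^{-1}U$. Inverses then change by positive diagonal factors, so the conclusion still holds. We will state this normalization explicitly.
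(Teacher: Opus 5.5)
The paper states this lemma with no proof; it only precedes the uniqueness theorem and is used there implicitly. There is therefore no argument of the paper's to compare with. Your proposal supplies a self-contained proof, and it is correct.

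It follows the classical route (Fiedler--Pt\'ak; Berman--Plemmons). The elimination step forces the first column of $L$ to be $(1,c/a_{11})$, the first row of $U$ to be $(a_{11},b^{T})$, and $L_{22}U_{22}=A_{22}-cb^{T}/a_{11}$. So the induction applies to the given factorization, not just to one you construct. The Schur complement is a $Z$-matrix whose inverse is the trailing block of $A^{-1}\ge 0$, hence again a nonsingular $M$-matrix. The nilpotent Neumann series then gives nonnegative inverses of the triangular factors. The reversal permutation correctly reduces $UL$ to $LU$. Note that you actually prove the stronger statement that $L$ and $U$ are themselves nonsingular $M$-matrices.

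The one point to tighten is your last paragraph. An arbitrary factorization $A=LU$ is related to the unit-lower one by $L=L_{0}D$ and $U=D^{-1}U_{0}$, where $D$ is only a nonsingular diagonal matrix. $D$ is positive automatically only when one factor is normalized to have unit (or positive) diagonal. Under the Doolittle convention $D=I$; under the Crout convention $D$ is the diagonal of the positive pivots. Without such a convention the lemma is false: $A=I=(-I)(-I)$ gives $L^{-1}=-I$.

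So the sign normalization must be stated as a hypothesis, not derived. Likewise, make explicit that $A$ is a nonsingular $M$-matrix. For the singular $M$-matrix $\begin{bmatrix}1&-1\\-1&1\end{bmatrix}$, the factor $U$ is singular and $U^{-1}$ does not exist.
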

\begin{theorem}
Let $A$ a be $M$-matrix then partioned LCS equation \eqref{eq:29} and \eqref{eq:30} has a unique solution. 
\end{theorem}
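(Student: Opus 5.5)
Uniqueness should follow from the standard fact that a linear complementarity problem $\mathrm{LCP}(M,q)$ has a unique solution for every $q$ whenever $M$ is a P-matrix, i.e.\ all its principal minors are positive. Nonsingular $M$-matrices are P-matrices, and so are identity-type matrices. The plan is to show that the coefficient matrices in \eqref{eq:29} and \eqref{eq:30}, namely $\tilde{S}$ and $\tilde{I}_{*}$, are P-matrices, or at least that the transformed problem inherits the P-property from $A$.

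\textbf{Step 1: the blocks and $D$.} Since $A$ is an $M$-matrix, every principal submatrix of $A$ is again an $M$-matrix. In particular each diagonal block $A_i$ is a nonsingular $M$-matrix, so $A_i^{-1}\ge 0$, and $D=\mathrm{diag}(A_1,\dots,A_r)$ is a nonsingular $M$-matrix. By the preceding Lemma, the LU and UL factors of $A_i$ have nonnegative inverses. Hence the spikes $V_i,W_i$ in \eqref{eq:28} are well defined. Because the off-diagonal blocks $B_i,C_i$ are nonpositive (they come from $A$), the spikes satisfy $V_i,W_i\le 0$.

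\textbf{Step 2: the reduced problem \eqref{eq:29}.} We have $S=D^{-1}A$ with $D^{-1}\ge 0$, and $\tilde S$ is obtained by keeping the top and bottom $\beta$ rows of each block. I would identify $\tilde S$ with a Schur-complement-type reduction of $S$: because the $V_i,W_i$ columns only couple boundary rows, eliminating the middle unknowns leaves exactly $\tilde S$. A Schur complement of an $M$-matrix is an $M$-matrix. To see that $S$ itself is an $M$-matrix, note that its diagonal is $I$ and its off-diagonal part $D^{-1}(A-D)$ is nonpositive. Moreover, taking $v>0$ with $Av>0$ (which exists since $A$ is an $M$-matrix) gives $Sv=D^{-1}Av>0$, because $D^{-1}\ge0$ is nonsingular and therefore has no zero row. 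So $S$, and hence $\tilde S$, is an $M$-matrix and in particular a P-matrix. The LCP \eqref{eq:29} therefore has a unique solution $\tilde X$.

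\textbf{Step 3: the decoupled problem \eqref{eq:30}.} With $\tilde X$ fixed, $G^{*}$ is determined. The coefficient matrix $\tilde I_{*}$ is an identity (positive diagonal), so the LCP decouples componentwise. Its unique solution is the projection $X^{*}=\max(G^{*},0)$. Combining Steps 2 and 3 gives uniqueness of the partitioned solution. Consistency with the original problem then follows from Theorem 3.2, since the transformation by $D^{-1}$ preserves the inequalities (Theorem 3.1).

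\textbf{Main obstacle.} The delicate point is Step 2, namely justifying that the reduced matrix $\tilde S$ really is a principal Schur complement of $S$. With the paper's loose definition it could be a mere row extraction, which need not inherit the P-property. I would first try to write $S$ in permuted block form (boundary rows versus middle rows) and check that the middle-to-middle block is the identity. In that case the Schur complement is exactly $\tilde S$ minus terms that vanish by the sparsity of the spikes. If this fails, the fallback is to argue uniqueness directly: suppose two solutions of \eqref{eq:29} exist, take their difference $z$, and use the sign-reversal characterization of P-matrices, i.e.\ $z_i(\tilde S z)_i\le 0$ for all $i$ implies $z=0$.
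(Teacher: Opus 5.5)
Your proposal is correct. Its skeleton matches the paper's: $A$ an $M$-matrix $\Rightarrow$ $A_i$ and $D$ are $M$-matrices $\Rightarrow$ $S$ is an $M$-matrix $\Rightarrow$ $\tilde S$ is an $M$-matrix $\Rightarrow$ the reduced LCP is uniquely solvable, after which \eqref{eq:30} is a trivial identity problem. You justify the links differently, and more soundly.

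\textbf{Comparison with the paper.}
\begin{itemize}
\item The paper gets the $M$-property of $S$ from the claim that its off-diagonal entries are nonpositive with modulus below one. That is not a valid criterion: the $3\times 3$ matrix with unit diagonal and all off-diagonal entries $-0.9$ has eigenvalue $-0.8$. You instead write $S=I+D^{-1}(A-D)$ as a $Z$-matrix and certify it through $Sv=D^{-1}Av>0$.
\item The paper passes to $\tilde S$ by an unexplained ``extraction of rows''.
\item The paper closes by pointing to LU factorization with projection, which describes how to compute a solution, not why it is unique. You use the P-matrix characterization of unique solvability of $\mathrm{LCP}(M,q)$ and the explicit formula $X^{*}=\max(G^{*},0)$.
\end{itemize}
The result is that each link is actually proved in your version.

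\textbf{Your main obstacle.} It dissolves once you look at columns rather than rows. $\mathbf{B}_i$ is nonzero only in the columns of the first $\beta$ unknowns of partition $i+1$. $\mathbf{C}_i$ is nonzero only in the columns of the last $\beta$ unknowns of partition $i-1$. So the spikes $A_i^{-1}\mathbf{B}_i$ and $A_i^{-1}\mathbf{C}_i$ occupy only boundary columns, and every middle column of $S$ is a unit vector. Your phrase that the spikes ``only couple boundary rows'' should therefore read ``occupy only boundary columns''; the spikes are dense down their whole partition.

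Ordering boundary unknowns first gives $S=\begin{pmatrix} S_{bb} & 0\\ S_{mb} & I\end{pmatrix}$. Consequences:
\begin{itemize}
\item The boundary rows involve only boundary unknowns.
\item The row extraction in \eqref{eq:29} is exactly the principal submatrix $\tilde S=S_{bb}$.
\item $\tilde S$ inherits the P-property immediately, because its principal minors are principal minors of $S$.
\end{itemize}
Neither the Schur complement (which equals $S_{bb}$ here anyway) nor the sign-reversal fallback is needed. The same block-triangular form also shows that the two-stage solution is precisely the unique solution of the LCP with matrix $S$.

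\textbf{Drop the closing consistency remark.} It is not part of this statement, and the equivalence in Theorem 3.2 that it relies on fails in general. Premultiplying by $D^{-1}\ge 0$ preserves $A\mathbf{x}\ge\mathbf{f}$, but not its converse and not componentwise complementarity. For example, take $A$ the tridiagonal $(-1,2,-1)$ matrix of order $4$, $r=2$, and $\mathbf{f}=(1,-10,-10,1)^{T}$. The original LCP has solution $(\tfrac12,0,0,\tfrac12)^{T}$. Here $G=D^{-1}\mathbf{f}<0$, so the transformed LCP has solution $\mathbf{0}$.
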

\begin{proof}
 Since the original matrix $A$ is an $M$-matrix, each partitioned diagonal block $A_i$ for all $i=1,2,\dots,r$ is likewise an $M$-matrix. 
Consequently, the decoupled block diagonal matrix $D$ remains an $M$-matrix as it inherits the structural properties of $A$. 
Notably, the off-diagonal entries of the SPIKE matrix $S$ are strictly non-positive with moduli strictly bounded by one, which guarantees that $S$ is also an $M$-matrix. 
Because $S$ is an $M$-matrix, extracting its top $m$ and bottom $m$ rows preserves this structure, implying that the reduced matrix $\tilde{S}$ is an $M$-matrix as well. 
The existence of the solution to the linear complementarity systems (LCS) \eqref{eq:29} and \eqref{eq:30} hinges entirely on finding a unique solution to the reduced linear complementarity problem $\text{LCP}(q,\tilde{S})$. 
Once the reduced solution is computed, the remaining component in \eqref{eq:30} yields a unique solution via the trivial identity problem $\text{LCP}(q,\tilde{I}_{*})$, which inherently satisfies all $M$-matrix properties. 
Therefore, to establish the main result, it is sufficient to prove that the reduced LCS equation \eqref{eq:29} possesses a unique solution. 
By employing a standard $\text{LU}$ decomposition combined with a projection onto the convex set, we successfully obtain the unique solution to $\text{LCP}(q,\tilde{S})$.
\end{proof}
\begin{remark}
 In essence, the reduced complementarity system preserves a banded structure identical to that of the original problem. Due to its intrinsic recursive nature, this complemeatrity system can be efficiently resolved by executing a recursive routine within the computational algorithm. The procedure of implementation is defined below.  
\end{remark}
\begin{algorithm}[{\bf Recursive variants of the algorithm}]\leavevmode\par
\begin{algorithmic}
\Procedure{Recursive Projected SPIKE}{$i_{\text{in}}, i_{\text{out}}, A, x$}
    \State $n_{\text{loc}} \gets i_{\text{out}} - i_{\text{in}} + 1$
    \If{$n_{\text{loc}} \le \text{tol}$}
     \State Solve LCS $$(R \cdot y - f_{r} ) \ge  0, \quad y \ge 0, \quad (R \cdot y- f_{r} )\perp  y=0 $$
        \State \textbf{Return}
    \EndIf
    \State $m_1 \gets \lfloor n_{\text{loc}} / 2 \rfloor$; \quad $i_{\text{mid}} \gets i_{\text{in}} + m_1 - 1$
    \State \Call{Recursive Projected SPIKE}{$i_{\text{in}}, i_{\text{mid}}, A, x$} 
    
    \State \Call{Recursive Projected SPIKE}{$i_{\text{mid}}+1, i_{\text{out}}, A, x$} 
    
    \State Compute Spike Arrays $V = A_{\text{left}}^{-1}B_1$ and $W = A_{\text{right}}^{-1}B_2$
    
    \State Solve recollected RLCS $$(R \cdot y - f_r ) \ge  0, \quad y \ge 0, \quad (R \cdot y - f_r )\perp  y=0 $$ 
                      Compute interface coupling coefficients vector $y$
    \State $x(i_{\text{in}}:i_{\text{mid}}) \gets \max (x(i_{\text{in}}:i_{\text{mid}}) - V \cdot y_2,0)$
    
    \State $x(i_{\text{mid}}+1:i_{\text{out}}) \gets \max (x(i_{\text{mid}}+1:i_{\text{out}}) - W \cdot y_1,0)$
\EndProcedure
\end{algorithmic}
\end{algorithm}
\subsubsection{Iterative Parallel Appraoch}
\begin{algorithm}[{\bf Type I}]\leavevmode\par
Choose $x^{0} \in V$ and $\gamma_{1},\gamma_{2}>0$\\
For $k=0,1,...$ $\quad  \quad $ (!Until $\nabla \mathcal{F}_{1} \textit{ and } \nabla \mathcal{F}_{2} = 0$).\\
Decompose the domain as 
\[\mathcal{K}=\sum_{i=1}^{r}\mathcal{K}_{i}\]
Consider a space of decomposition into two parts as \\
$$\mathcal{K}= \mathcal{K}_{1}^{*}  \oplus  \mathcal{K}_{2}^{*},$$
where $\mathcal{K}_{1}^{*}=\sum_{i=1}^{r}\mathcal{K}_{i,in}\oplus \mathcal{K}_{i,out}$ and
$\mathcal{K}_{2}^{*}=\sum_{i=1}^{r}\mathcal{K}_{i,mid}$.\\
Compute $y_{1}^{k} \in \mathcal{K}_{1}$ and $y_{2}^{k} \in \mathcal{K}_{2}$ such that 
$$\mathcal{F}_{1}(x^{k}+ \mathcal{P}_{in \oplus out} y_{1}^{k})=\min_{y_{1} \in \mathcal{K}_{1} }\mathcal{F}_{1}(x^{k}+ \mathcal{P}_{i,in \oplus out} y_{1})$$
$$\mathcal{F}_{2}(x^{k}+ \mathcal{P}_{in \oplus out} y_{2}^{k})=\min_{y_{2} \in \mathcal{K}_{2} }\mathcal{F}_{2}(x^{k}+ \mathcal{P}_{mid} y_{2})$$
$$x^{1,k}=x^{k}+\mathcal{P}_{in \oplus out}y_{1}^{k}$$
$$x^{2,k}=x^{k}+ \mathcal{P}_{mid} y_{2}^{k}$$
Determine $\delta_{1}^{k},\delta_{2}^{k}$ from the below update
\begin{align}
 x^{k+1}=x^{k}+\delta_{1}^{k}\mathcal{P}_{in \oplus out} y_{1}^{k} +\delta_{1}^{k}\mathcal{P}_{mid} y_{2}^{k}
 \label{eq:31}
\end{align}
such that 
\begin{align}
 \mathcal{F}(x^{k+1}) \le \gamma_{1}\mathcal{F}_{1}(x^{1,k})+ \gamma_{2}\mathcal{F}_{2}(x^{2,k})
 \label{eq:32}
\end{align}
End
\end{algorithm}
The above last inequality \eqref{eq:36} can be ensure by considering suitable parameters $\delta_{1}^{k},\delta_{2}^{k}$ in the either form
\begin{itemize}
 \item Determine $t$ such that 
 \[\mathcal{F}(x^{t,k})=\min (\mathcal{F}_{1}(x^{k}),\mathcal{F}_{2}(x^{k})) \]
 \item Using the convexity property of $\mathcal{F}$ 
$$x^{k+1}=x^{k}+\delta_{1}^{k}\mathcal{P}_{in \oplus out} y_{1}^{k} +\delta_{1}^{k}\mathcal{P}_{mid} y_{2}^{k}
=x^{k}+\gamma_{1}^{k}\mathcal{P}_{in \oplus out} y_{1}^{k} +\gamma_{2}^{k}\mathcal{P}_{mid} y_{2}^{k} $$
with $\delta_{1}=\gamma_{1}$ and $\delta_{2}=\gamma_{2}$.
\end{itemize}
\begin{algorithm}[{\bf Type II}]\leavevmode\par
Choose $x^{0} \in \mathcal{K}$ \\
For \quad $k=0,1,..\quad \quad \quad \quad \quad $ \ \ \  (!Until $\nabla \mathcal{F}_{1} , \nabla \mathcal{F}_{2} =0$) \\
Choose a space decomposition $$\mathcal{K}=\sum_{i=1}^{r} \mathcal{K}_{i}$$
Consider a space of decomposition into two parts as
$$\mathcal{K}= \mathcal{K}_{1}^{*}  \oplus  \mathcal{K}_{2}^{*},$$
where $\mathcal{K}_{1}^{*}=\sum_{i=1}^{r}\mathcal{K}_{i,in}\oplus \mathcal{K}_{i,out}$ and
$\mathcal{K}_{2}^{*}=\sum_{i=1}^{r}\mathcal{K}_{i,mid}$.
Compute $y^{k}_{1}\in \mathcal{K}_{1}^{*}$ and $y^{k}_{2} \in \mathcal{K}_{2}^{*}$ such that
$$\mathcal{F}_{1}(x^{k}+ \mathcal{P}_{in \oplus out} y_{1}^{k})=\min_{y_{1} \in \mathcal{K}_{1} }\mathcal{F}_{1}(x^{k}+ \mathcal{P}_{i,in \oplus out} y_{1})$$
$$\mathcal{F}_{2}(x^{k}+ \omega \mathcal{P}_{in \oplus out} y_{1}^{k}+\mathcal{P}_{mid} y_{2}^{k})=\min_{y_{2} \in \mathcal{K}_{2} }\mathcal{F}_{2}(x^{k}+ \omega \mathcal{P}_{in \oplus out} y_{1}^{k} + \mathcal{P}_{mid} y_{2})$$
$$x^{k+1}=x^{k}+\omega (\mathcal{P}_{in \oplus out} y_{1}^{k} + \mathcal{P}_{mid} y_{2}^{k})$$
End
\end{algorithm}
\section{General convergence theory for distributive implementation}
In this section, we first state and derive the unconstraint optimization formulation defined in Section 1. We will try to establish some convergence for linear type problem.
\subsection{Parallel distributive analysis for general elliptic problem : smooth optimization}
Additionally, we assume the functionals $\mathcal{F}_{1}$ and $\mathcal{F}_{2}$ are $K_1$-smooth and $K_2$-smooth, respectively. By the standard Descent Lemma, this property is equivalent to satisfying the following quadratic upper bounds for all $\mathbf{x}, \mathbf{y} \in \mathcal{K}$:
\[
\mathcal{F}_1(\mathbf{y})-  \mathcal{F}_1(\mathbf{x})\le  \langle \nabla \mathcal{F}_1(\mathbf{x}), \mathbf{y}-\mathbf{x} \rangle + \frac{K_1}{2} \|\mathbf{y}-\mathbf{x}\|^2
\]
\[
\mathcal{F}_2(\mathbf{y})-\mathcal{F}_2(\mathbf{x}) \le   \langle \nabla \mathcal{F}_2(\mathbf{x}), \mathbf{y}-\mathbf{x} \rangle + \frac{K_2}{2} \|\mathbf{y}-\mathbf{x}\|^2
\]
In this section, we assume that the functionals $\mathcal{F}_{1}$ and $\mathcal{F}_{2}$ are $K_1$ and $K_2$-smooth. By utilizing this standard Lipschitz-continuous gradient property, we invoke the classic descent lemma to establish the convergence of Algorithm 1.
We now state and prove the convergence theorem for Algorithm 1.
\begin{theorem}
Let $\mathcal{F}_{1}$ and $\mathcal{F}_{2}$ be functionals that are bounded from below and possess Lipschitz-continuous gradients. Suppose the space $\mathcal{K}$ is decomposed as $\mathcal{K}=\sum_{i=1}^{r}\mathcal{K}_{i}$, where each subspace is partitioned into $\mathcal{K}_{i}=\mathcal{K}_{i,\text{in}}\oplus \mathcal{K}_{i,\text{mid}} \oplus \mathcal{K}_{i,\text{out}}$. Alternatively, let the space be represented as $\mathcal{K}=\mathcal{K}_{1}^{*}\oplus \mathcal{K}_{2}^{*}$, where $\mathcal{K}_{1}^{*}=\sum_{i=1}^{r}\mathcal{K}_{i,\text{in}}\oplus \mathcal{K}_{i,\text{out}}$ and $\mathcal{K}_{2}^{*}=\sum_{i=1}^{r}\mathcal{K}_{i,\text{mid}}$. Furthermore, let $\mathcal{R}_{\text{in} \oplus \text{out}}:\mathcal{K} \to \mathcal{K}_{1}^{*}$ and $\mathcal{R}_{\text{mid}}:\mathcal{K} \to \mathcal{K}_{2}^{*}$ be surjective restriction operators, and let $\mathcal{P}_{\text{in} \oplus \text{out}}: \mathcal{K}_{1}^{*} \to \mathcal{K}$ and $\mathcal{P}_{\text{mid}}: \mathcal{K}_{2}^{*} \to \mathcal{K}$ be linear, injective embedding operators satisfying $\mathcal{P}_{\text{in} \oplus \text{out}}^{T}=\mathcal{R}_{\text{in} \oplus \text{out}}$ and $\mathcal{P}_{\text{mid}}^{T}=\mathcal{R}_{\text{mid}}$. Assume that the space decomposition satisfies the following stability condition for some constant $c>0$:
\[
\|\mathcal{R}_{\text{in} \oplus \text{out}} \mathbf{x}\|^{2}+\|\mathcal{R}_{\text{mid}} \mathbf{x} \|^{2} \ge c\|\mathbf{x}\|^{2} \quad \forall \mathbf{x} \in \mathcal{K}
\]
Then, every limit point of the sequence 
\[
\{\mathbf{x}^k\} := \begin{Bmatrix}
\mathbf{x}^{k}_{1} \\
\mathbf{x}^{k}_{2}
\end{Bmatrix}
\]
generated by Algorithm 1 is a solution to the minimization problem, satisfying:
\[
\lim_{k \to \infty}\nabla \mathcal{F}(\mathbf{x}^{k}) = \begin{Bmatrix}
\lim_{k \to \infty}\nabla \mathcal{F}_{1}(\mathbf{x}^{k}_{1}) \\
\lim_{k \to \infty}\nabla \mathcal{F}_{2}(\mathbf{x}^{k}_{2})
\end{Bmatrix} = \begin{Bmatrix}
\mathbf{0} \\
\mathbf{0}
\end{Bmatrix}
\]
\end{theorem}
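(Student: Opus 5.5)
We follow the Ferris--Mangasarian parallel variable distribution argument: show that Algorithm~1 produces a sufficient decrease of $\mathcal{F}$ proportional to the squared norms of the restricted gradients, sum these decreases, and then use the stability condition to recover the full gradient. Throughout, $\mathcal{F}$ is the global functional, and $\mathcal{F}_1,\mathcal{F}_2$ are its restrictions to the two blocks $\mathcal{K}_1^*$ and $\mathcal{K}_2^*$. Accordingly, $\nabla\mathcal{F}_1(\mathbf{x}^k)=\mathcal{R}_{\text{in}\oplus\text{out}}\nabla\mathcal{F}(\mathbf{x}^k)$ and $\nabla\mathcal{F}_2(\mathbf{x}^k)=\mathcal{R}_{\text{mid}}\nabla\mathcal{F}(\mathbf{x}^k)$, by the chain rule and $\mathcal{P}^T=\mathcal{R}$.

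\textbf{Step 1: per-block decrease.} Since $y_1^k$ minimizes $y_1\mapsto\mathcal{F}_1(\mathbf{x}^k+\mathcal{P}_{\text{in}\oplus\text{out}}y_1)$, its value is no larger than the value at the trial point $y_1=-\frac{1}{K_1}\mathcal{R}_{\text{in}\oplus\text{out}}\nabla\mathcal{F}(\mathbf{x}^k)$. Applying the descent lemma quoted above, with $\|\mathcal{P}_{\text{in}\oplus\text{out}}\|\le 1$ because it is an injective embedding, gives
\[
\mathcal{F}_1(\mathbf{x}^{1,k})\le \mathcal{F}(\mathbf{x}^k)-\frac{1}{2K_1}\|\mathcal{R}_{\text{in}\oplus\text{out}}\nabla\mathcal{F}(\mathbf{x}^k)\|^2 .
\]
The analogous bound holds for $\mathcal{F}_2(\mathbf{x}^{2,k})$ with $K_2$ and $\mathcal{R}_{\text{mid}}$.

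\textbf{Step 2: decrease of $\mathcal{F}$.} Combine the two bounds through the synchronization inequality \eqref{eq:32} with weights $\gamma_1+\gamma_2=1$, $\gamma_i>0$; this is exactly the convex-combination choice $\delta_i=\gamma_i$ in the remark after Algorithm~1, justified by convexity of $\mathcal{F}$. Setting $\underline{\gamma}=\min\gamma_i$ and $K=\max K_i$, we obtain
\[
\mathcal{F}(\mathbf{x}^{k+1})\le\mathcal{F}(\mathbf{x}^k)-\frac{\underline{\gamma}}{2K}\Big(\|\mathcal{R}_{\text{in}\oplus\text{out}}\nabla\mathcal{F}(\mathbf{x}^k)\|^2+\|\mathcal{R}_{\text{mid}}\nabla\mathcal{F}(\mathbf{x}^k)\|^2\Big).
\]
The stability hypothesis bounds the bracket below by $c\|\nabla\mathcal{F}(\mathbf{x}^k)\|^2$.

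\textbf{Step 3: telescoping and limit points.} The sequence $\mathcal{F}(\mathbf{x}^k)$ is monotonically nonincreasing and bounded below, so summing the inequality of Step 2 gives
\[
\frac{c\,\underline{\gamma}}{2K}\sum_k\|\nabla\mathcal{F}(\mathbf{x}^k)\|^2<\infty .
\]
Hence $\nabla\mathcal{F}(\mathbf{x}^k)\to 0$, and so both restricted components $\nabla\mathcal{F}_1,\nabla\mathcal{F}_2$ tend to $0$. This is the displayed block identity. By continuity of the gradient, every limit point $\bar{\mathbf{x}}$ satisfies $\nabla\mathcal{F}(\bar{\mathbf{x}})=0$. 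Convexity of $\mathcal{F}$ (the quadratic energy of \eqref{eq:12}) then makes $\bar{\mathbf{x}}$ a minimizer.

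\textbf{Main obstacle.} The hard step is Step~2, namely passing from the two block-wise decreases to a decrease of the global $\mathcal{F}$ at the combined update \eqref{eq:31}. This requires interpreting \eqref{eq:32} as holding with $\gamma_1+\gamma_2=1$, so that $\gamma_1\mathcal{F}_1(\mathbf{x}^{1,k})+\gamma_2\mathcal{F}_2(\mathbf{x}^{2,k})$ is compared against $\mathcal{F}(\mathbf{x}^k)$ itself. The first thing I would try is to note that $\mathbf{x}^{k+1}=\gamma_1\mathbf{x}^{1,k}+\gamma_2\mathbf{x}^{2,k}$ and apply Jensen's inequality for convex $\mathcal{F}$. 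In the constrained case, a secondary difficulty is that the trial point in Step~1 must be feasible. There I would replace the gradient step by the projected step onto $\mathcal{K}_1^*$ and use the projected-gradient mapping in place of $\nabla\mathcal{F}$, which yields the same telescoping argument.
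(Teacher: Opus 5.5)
Your proposal is correct and follows essentially the paper's own Ferris--Mangasarian argument. The trial step $-\frac{1}{K}\mathcal{R}_{\text{in}\oplus\text{out}}\nabla\mathcal{F}(\mathbf{x}^k)$ (resp.\ with $\mathcal{R}_{\text{mid}}$), combined with the descent lemma and the minimization property, gives the per-block decrease; the weighted synchronization step and the stability constant $c$ then give a decrease of order $\frac{c\,\underline{\gamma}}{2K}\|\nabla\mathcal{F}(\mathbf{x}^k)\|^2$, and your explicit requirement $\gamma_1+\gamma_2=1$ and the telescoping sum make precise what the paper leaves implicit. One minor correction: injectivity alone does not give $\|\mathcal{P}_{\text{in}\oplus\text{out}}\|\le 1$ (it holds here because $\mathcal{P}^T=\mathcal{R}$ with $\mathcal{R}$ a coordinate restriction makes $\mathcal{P}$ an isometry), but you do not need it, since any uniform Lipschitz constant for the block functional, such as $K\|\mathcal{P}_{\text{in}\oplus\text{out}}\|^2$, suffices.
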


\begin{proof}
 We observe that 
 \[\nabla \mathcal{F}_{1}(y_{1})=\nabla \mathcal{F}(x^{k}_{1}+\mathcal{P}_{in \oplus out}y_{1})\mathcal{P}_{in \oplus out}\]
 \[\nabla \mathcal{F}_{2}(y_{2})=\nabla \mathcal{F}(x^{k}_{2}+\mathcal{P}_{mid}y_{2})\mathcal{P}_{mid}\]
 As we assume that $\mathcal{F}_{1}$ $\mathcal{F}_{2}$ are Lipschitz continuous gradients with constants $K_1$ and $K_2$ we get that
 \[ ||\nabla \mathcal{F}(y_{1})-\nabla \mathcal{F}(y_{1})|| \le ||\nabla \mathcal{F}_{1}(y_{1})-\nabla \mathcal{F}_{1}(y_{1})||+ ||\nabla \mathcal{F}_{2}(y_{1})-\nabla \mathcal{F}_{2}(y_{1})|| \]
 \[= ||\nabla \mathcal{F}(x^{k}_{1}+\mathcal{P}_{in \oplus out}y_{1})\mathcal{P}_{in \oplus out}-\nabla \mathcal{F}(x^{k}_{1}+\mathcal{P}_{in \oplus out}\bar{y}_{1})\mathcal{P}_{in \oplus out}||+||\nabla \mathcal{F}(x^{k}_{2}+\mathcal{P}_{mid}y_{1})\mathcal{P}_{mid}-\nabla \mathcal{F}(x^{k}_{2}+\mathcal{P}_{mid}\bar{y}_{1})\mathcal{P}_{mid}||\]
 \[\le ||\mathcal{P}_{in \oplus out}||.K_{1}||\mathcal{P}_{in \oplus out}(y_{1}-\bar{y}_{1})||+ ||\mathcal{P}_{mid}||.K_{2}||\mathcal{P}_{mid}(y_{1}-\bar{y}_{1})||\]
 \[||\mathcal{P}_{in \oplus out}||^{2}. K_{1}||(y_{1}-\bar{y}_{1})||+||\mathcal{P}_{mid}||^{2}. K_{2}||(y_{1}-\bar{y}_{1})||\]
 \[\le K.(||\mathcal{P}_{in \oplus out}||^{2}+||\mathcal{P}_{mid}||^{2})||(y_{1}-\bar{y}_{1})||,\]
 here $K=\max(K_{1},K_{2})$.\\
which shows that $\nabla \mathcal{F}$ is lipschitz continuous.
Let $z_{1}^{k}=-(1/K).\nabla \mathcal{F}_{1}(0)^{T}$ and $z_{2}^{k}=-(1/K).\nabla \mathcal{F}_{2}(0)^{T}$. Then by using quardratic Bound lemma we able to get
\[\nabla \mathcal{F}_{1}(0)-\nabla \mathcal{F}_{1}(z_{1}^{k}) \ge \frac{1}{2K}||\nabla \mathcal{F}_{1}(0)^{T}||\]
\[\nabla \mathcal{F}_{2}(0)-\nabla \mathcal{F}_{2}(z_{2}^{k}) \ge \frac{1}{2K}||\nabla \mathcal{F}_{2}(0)^{T}||\]
As we know from the minimization property,
$\mathcal{F}_{1}(z_{1}^{k}) \ge \mathcal{F}_{1}(y_{1}^{k})$ and 
$\mathcal{F}_{2}(z_{2}^{k}) \ge \mathcal{F}_{2}(y_{2}^{k})$ implies 
\[\mathcal{F}(x^{k})-\mathcal{F}(x^{1,k}) \ge  \mathcal{F}_{1}(0)-\mathcal{F}_{1}(y_{1}^{k}) \ge \frac{1}{2K}||\nabla \mathcal{F}_{1}(0)^{T}||\]
and
\[\mathcal{F}(x^{k})-\mathcal{F}(x^{2,k}) \ge  \mathcal{F}_{2}(0)-\mathcal{F}_{2}(y_{2}^{k}) \ge \frac{1}{2K}||\nabla \mathcal{F}_{2}(0)^{T}||\]
which implies that 
\[\mathcal{F}(x^{k})-\mathcal{F}(x^{1,k}) \ge \frac{1}{2K}||(\nabla \mathcal{F}(x^{k})\mathcal{P}_{in \oplus out})^{T}||^2  = \frac{1}{2K}||\mathcal{R}_{in \oplus out}\nabla \mathcal{F}(x^{k})^{T}||^2\]
and
\[\mathcal{F}(x^{k})-\mathcal{F}(x^{2,k}) \ge \frac{1}{2K}||(\nabla \mathcal{F}(x^{k})\mathcal{P}_{mid})^{T}||^2 = \frac{1}{2K}||\mathcal{R}_{mid}\nabla \mathcal{F}(x^{k})^{T}||^2\] 
Now we multiply both side by weight factor $\beta_{i} \in (0,1)$ and using above argument able to get the following
\[\mathcal{F}(x^{k})-(\beta_{1}\mathcal{F}(x^{1,k})+\beta_{2}\mathcal{F}(x^{2,k}))\ge \frac{c\beta}{2K}||\nabla \mathcal{F}(x^{k})^{T}||^2,\]
where $\beta=\min(\beta_{1},\beta_{2})$.\\
Now using the linear convergence theory every limit point of $\{x^{k}\}$ is convergent, that is
$$\lim_{n \to \infty}\nabla \mathcal{F}(x^{k}):=\begin{Bmatrix}
\lim_{n \to \infty}\nabla \mathcal{F}_{1}(x^{k}_{1})\\
\lim_{n \to \infty}\nabla \mathcal{F}_{2}(x^{k}_{2})
\end{Bmatrix}
=\begin{Bmatrix}
0\\
0
\end{Bmatrix}.$$
\end{proof}
\begin{lemma}
 Let $\mathcal{F}_{1}: \mathcal{K}_{1}^{*} \to \mathbb{R}$ and $\mathcal{F}_{2}: \mathcal{K}_{2}^{*} \to \mathbb{R}$ are continuously differential functions and let 
 $\{x^{k}_{1}\} \subseteq \mathcal{K}_{1}^{*}$ and $\{x^{k}_{2}\} \subseteq \mathcal{K}_{2}^{*} $. If $\mathcal{F}_{1}$ and $\mathcal{F}_{2}$ are bound from below and 
 \[\mathcal{F}_{1}(x^k)-\mathcal{F}_{1}(x^{k+1}) \ge \alpha_{1}  ||\nabla \mathcal{F}_{1}(x^k)||^2 \]
 and
 \[\mathcal{F}_{2}(x^k)-\mathcal{F}_{2}(x^{k+1}) \ge \alpha_{2}  ||\nabla \mathcal{F}_{2}(x^k)||^2 \]
\end{lemma}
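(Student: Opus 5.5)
The lemma as printed stops before its conclusion. From the way Theorem 4.1's proof ends ("using the linear convergence theory every limit point \ldots is convergent"), I take the intended conclusion to be the standard sufficient-decrease statement:
\[
\lim_{k\to\infty}\nabla\mathcal{F}_{1}(x^{k}_{1})=0,\qquad \lim_{k\to\infty}\nabla\mathcal{F}_{2}(x^{k}_{2})=0 .
\]
Consequently, every limit point $\bar x_j$ of $\{x^k_j\}$, $j=1,2$, is a stationary point of $\mathcal{F}_j$. The plan is the classical telescoping argument, applied to each component separately, followed by a continuity argument at limit points.

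\textbf{Step 1: monotonicity and telescoping.} Fix $j\in\{1,2\}$. The hypothesis $\mathcal{F}_{j}(x^k_j)-\mathcal{F}_{j}(x^{k+1}_j)\ge \alpha_j\|\nabla\mathcal{F}_j(x^k_j)\|^2\ge 0$ (with $\alpha_j>0$) shows that $\{\mathcal{F}_j(x^k_j)\}$ is nonincreasing. Since $\mathcal{F}_j$ is bounded below by some $m_j$, the sequence converges. Summing the inequality for $k=0,\dots,N$ telescopes to
\[
\alpha_j\sum_{k=0}^{N}\|\nabla\mathcal{F}_j(x^k_j)\|^2\le \mathcal{F}_j(x^0_j)-\mathcal{F}_j(x^{N+1}_j)\le \mathcal{F}_j(x^0_j)-m_j .
\]
Letting $N\to\infty$ shows the series $\sum_k\|\nabla\mathcal{F}_j(x^k_j)\|^2$ converges, so its terms tend to zero. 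This gives $\nabla\mathcal{F}_j(x^k_j)\to 0$ for $j=1,2$.

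\textbf{Step 2: limit points.} Let $x^{k_\ell}_j\to\bar x_j$ along a subsequence. Because $\mathcal{F}_j$ is continuously differentiable, $\nabla\mathcal{F}_j(x^{k_\ell}_j)\to\nabla\mathcal{F}_j(\bar x_j)$, and by Step 1 this limit must be zero. Stacking the two components as in Theorem 4.1 then yields $\lim_k\nabla\mathcal{F}(x^k)=(0,0)^T$.

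\textbf{Expected difficulties.} The argument itself is routine; the real work is interpretive.

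First, the constants $\alpha_1,\alpha_2$ must be strictly positive. I would make this explicit, since it is what Theorem 4.1 delivers in the form $\alpha=c\beta/(2K)$.

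Second, the sets $\mathcal{K}_1^*$ and $\mathcal{K}_2^*$ are closed convex sets rather than subspaces. If the intended conclusion is constrained stationarity, the plain gradient should be replaced by the projected gradient, $x-\Pi_{\mathcal{K}_j^*}(x-\nabla\mathcal{F}_j(x))$. The same telescoping argument works provided the hypothesis is stated with this quantity, and its continuity again follows from the continuity of $\nabla\mathcal{F}_j$ and of the projection $\Pi_{\mathcal{K}_j^*}$.

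Third, if the conclusion is meant to cover existence of limit points, compactness of the sublevel sets is needed. Without it, the statement must be read as "every limit point (if any)".
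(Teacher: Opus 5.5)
The paper states this lemma without any conclusion and gives no proof of it. The only trace of an argument is the appeal to ``linear convergence theory'' at the end of the proof of Theorem 4.1, which is exactly the Ferris--Mangasarian sufficient-decrease argument you reconstruct.

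Your completion of the statement is the natural one, and your proof is correct. With $\alpha_j>0$ and $\mathcal{F}_j$ bounded below, telescoping gives $\sum_k\|\nabla\mathcal{F}_j(x^k_j)\|^2<\infty$. Hence $\nabla\mathcal{F}_j(x^k_j)\to 0$ along the whole sequence, with no continuity or compactness needed. Continuity of $\nabla\mathcal{F}_j$ is used only to pass to a limit point, which lies in the domain when $\mathcal{K}_j^*$ is closed. Your projected-gradient remark is also sound, and it matches the paper's constrained setting in Section 4.2. There the same telescoping applies to the forcing function $\vartheta(\psi(x^k))$, and the lower semicontinuity built into the paper's definition of an optimality function replaces your continuity step.

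The one step that does not follow from the lemma's hypotheses is the final ``stacking'' into $\lim_k\nabla\mathcal{F}(x^k)=(0,0)^T$. That identification requires $\nabla\mathcal{F}(x)=(\nabla\mathcal{F}_1(x_1),\nabla\mathcal{F}_2(x_2))$, i.e.\ a separable $\mathcal{F}$. In the proof of Theorem 4.1 the paper's $\mathcal{F}_1,\mathcal{F}_2$ are instead restrictions of a coupled $\mathcal{F}$ to subspaces through the current iterate: $y_1\mapsto\mathcal{F}(x^k+\mathcal{P}_{\mathrm{in}\oplus\mathrm{out}}y_1)$ and $y_2\mapsto\mathcal{F}(x^k+\mathcal{P}_{\mathrm{mid}}y_2)$. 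So they change with $k$ and control only $\mathcal{R}_{\mathrm{in}\oplus\mathrm{out}}\nabla\mathcal{F}(x^k)$ and $\mathcal{R}_{\mathrm{mid}}\nabla\mathcal{F}(x^k)$.

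In that setting the clean route is to apply your Step 1 once, to $\mathcal{F}$ itself. You would use the single decrease $\mathcal{F}(x^k)-\mathcal{F}(x^{k+1})\ge\frac{c\beta}{2K}\|\nabla\mathcal{F}(x^k)\|^2$, which the paper's estimate combined with the synchronization condition is meant to supply. That is where the stability constant $c$ enters. This is a caveat about how the lemma is used, not a flaw in your proof of it.
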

\begin{theorem}
 Let the functionals $\mathcal{F}_{1}$ and $\mathcal{F}_{2}$ have Lipschitz-continuous gradients, be bounded from below as stated in Lemma 4.2, and be strongly convex with constants $C_{1}$ and $C_{2}$, respectively. Then, the sequence of iterates $\{\mathbf{x}^{k}\}$ converges to the unique minimizer $\mathbf{x}^*$ of $\mathcal{F}$ at a linear rate, satisfying 
 \[||\mathbf{x^{k}}-\mathbf{x^*}||\le \Big(\frac{2}{C_{1}}(\mathcal{F}_{1}(x^{0}_{1})-\mathcal{F}_{1}(x_{1}^{k})) \Big)^{1/2}\Big( 1-\frac{c_{1}\beta_{1}C_{1}^2}{K^2}\Big)^{k/2} 
 +\Big(\frac{2}{C_{2}}(\mathcal{F}_{2}(x^{0}_{2})-\mathcal{F}_{2}(x_{2}^{k})) \Big)^{1/2}\Big( 1-\frac{c_{2}\beta_{2}C_{2}^2}{K^2}\Big)^{k/2}.\]
\end{theorem}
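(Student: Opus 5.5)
The plan is to work block by block and then recombine. By the stability condition and the sum structure $\mathcal{K}=\mathcal{K}_{1}^{*}\oplus\mathcal{K}_{2}^{*}$, I will write $\mathbf{x}^k-\mathbf{x}^*=(\mathbf{x}^k_1-\mathbf{x}^*_1)+(\mathbf{x}^k_2-\mathbf{x}^*_2)$ and apply the triangle inequality, so that $\|\mathbf{x}^k-\mathbf{x}^*\|\le\|\mathbf{x}^k_1-\mathbf{x}^*_1\|+\|\mathbf{x}^k_2-\mathbf{x}^*_2\|$. It therefore suffices to prove, for $j=1,2$,
\[
\|\mathbf{x}^k_j-\mathbf{x}^*_j\|\le\Big(\tfrac{2}{C_j}\big(\mathcal{F}_j(x^0_j)-\mathcal{F}_j(x^k_j)\big)\Big)^{1/2}\rho_j^{k/2},\qquad \rho_j:=1-\tfrac{c_j\beta_jC_j^2}{K^2}.
\]

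\textbf{Step 1: sufficient decrease.} From the proof of Theorem 4.1 I take the sufficient decrease estimate $\mathcal{F}_j(x^k_j)-\mathcal{F}_j(x^{k+1}_j)\ge\frac{c_j\beta_j}{2K}\|\nabla\mathcal{F}_j(x^k_j)\|^2$. This is exactly the hypothesis of Lemma 4.2 with $\alpha_j=c_j\beta_j/(2K)$.

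\textbf{Step 2: linear contraction of the gap.} Strong convexity with constant $C_j$ gives two facts at the minimizer: $\|\nabla\mathcal{F}_j(x)\|\ge C_j\|x-x^*_j\|$, and $e_j(x):=\mathcal{F}_j(x)-\mathcal{F}_j(x^*_j)\ge\frac{C_j}{2}\|x-x^*_j\|^2$. The Lipschitz gradient (descent lemma) gives $e_j(x)\le\frac{K}{2}\|x-x^*_j\|^2$. Chaining these,
\[
\|\nabla\mathcal{F}_j\|^2\ge C_j^2\|x-x_j^*\|^2\ge\frac{2C_j^2}{K}e_j .
\]
Inserting this in Step 1 yields $e_j(x^{k+1}_j)\le(1-c_j\beta_jC_j^2/K^2)\,e_j(x^k_j)=\rho_j\,e_j(x^k_j)$. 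This is precisely how the $C_j^2/K^2$ in the stated rate arises, through the gradient-to-distance-to-gap chain rather than the sharper PL route. Iterating gives $e_j(x^k_j)\le\rho_j^k e_j(x^0_j)$.

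\textbf{Step 3: pass to distances.} Strong convexity then gives $\|\mathbf{x}^k_j-\mathbf{x}^*_j\|^2\le\frac{2}{C_j}e_j(x^k_j)$.

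\textbf{Step 4: replace $e_j(x^0_j)$ by $\mathcal{F}_j(x^0_j)-\mathcal{F}_j(x^k_j)$.} This is the main obstacle, since the naive bound has $e_j(x^0_j)=\mathcal{F}_j(x^0_j)-\mathcal{F}_j(x^*_j)$, which is larger. I would try not to bound $\|x^k_j-x^*_j\|^2$ by $e_j(x^k_j)$ directly. Instead I would use the gradient route $\|x^k_j-x^*_j\|^2\le C_j^{-2}\|\nabla\mathcal{F}_j(x^k_j)\|^2\le\frac{2K}{c_j\beta_jC_j^2}\big(\mathcal{F}_j(x^k_j)-\mathcal{F}_j(x^{k+1}_j)\big)$, which measures progress by a \emph{difference} of functional values rather than a gap to the optimum. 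I would then telescope these one-step decreases, using that they contract geometrically: $\mathcal{F}_j(x^i_j)-\mathcal{F}_j(x^{i+1}_j)\le e_j(x^i_j)$ is bounded by $\rho_j^i$ times the initial one-step decrease. Since $\mathcal{F}_j(x^0_j)-\mathcal{F}_j(x^k_j)$ is the sum of the first $k$ decreases, it dominates each of them.

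The goal is to absorb the constant $\frac{2K}{c_j\beta_jC_j^2}$ together with one factor $(1-\rho_j)^{-1}$ coming from the geometric sum. I expect this bookkeeping to close using the slack between the true PL contraction $1-c_j\beta_jC_j/K$ and the stated $\rho_j$ (since $C_j\le K$). Whether it closes exactly with constant $2/C_j$ for all $k\ge1$ is the delicate point. If it does not close uniformly, I would first check the case $k=1$ by hand and then induct on $k$ using $e_j(x^{k+1}_j)\le\rho_je_j(x^k_j)$.

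\textbf{Step 5: combine.} Summing the two block estimates via the triangle inequality from the first paragraph gives the stated bound.
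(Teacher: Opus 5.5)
Your Steps 1--3 and 5 are the paper's argument. The ingredients are the blockwise sufficient decrease with $\alpha_j=c_j\beta_j/(2K)$ (which the paper simply takes as the hypothesis of Lemma 4.2), the bound $\|\nabla\mathcal{F}_j(x)\|\ge C_j\|x-x_j^*\|$ from strong convexity, and the quadratic upper bound $e_j(x)\le\frac{K}{2}\|x-x_j^*\|^2$. You then iterate the contraction $e_j(x_j^{k+1})\le\rho_j e_j(x_j^k)$, pass to distances via $\|x_j^k-x_j^*\|^2\le\frac{2}{C_j}e_j(x_j^k)$, and finish with the triangle inequality. What this proves is
\[
\|\mathbf{x}^k-\mathbf{x}^*\|\le\sum_{j=1}^{2}\Big(\frac{2}{C_j}\big(\mathcal{F}_j(x_j^0)-\mathcal{F}_j(x_j^*)\big)\Big)^{1/2}\rho_j^{k/2}.
\]
The gap is Step 4, and it cannot be closed. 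The stated bound is false outright at $k=0$: its right-hand side vanishes, which would force $\mathbf{x}^0=\mathbf{x}^*$. For $k\ge1$, the inequality you aim for does not follow from the hypotheses. Take $\mathcal{F}_1(x)=\frac{1}{2}\|x\|^2$, so $C_1=K=1$ and there is no slack between the PL contraction and $\rho_1$. Keep block $2$ at its minimizer, so the second term of the bound vanishes. Let $x_1^k=\rho_1^{k/2}x_1^0$ with $x_1^0\neq0$ and $0<\rho_1<1$. The Step 1 decrease then holds with equality, yet for every $k$
\[
\|x_1^k-x_1^*\|^2=\rho_1^k\|x_1^0\|^2>\rho_1^k(1-\rho_1^k)\|x_1^0\|^2=\frac{2}{C_1}\big(\mathcal{F}_1(x_1^0)-\mathcal{F}_1(x_1^k)\big)\rho_1^k .
\]
Step 4 uses nothing beyond the Step 1 decrease, strong convexity and the Lipschitz bound, so no bookkeeping can rescue it. What you can actually justify is $e_j(x_j^i)\le\rho_j^i e_j(x_j^0)\le\rho_j^i\big(\mathcal{F}_j(x_j^0)-\mathcal{F}_j(x_j^1)\big)/(1-\rho_j)$. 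This holds with equality throughout in the example, so the factor $(1-\rho_j)^{-1}$ you hoped to absorb is really there.

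The trouble comes from a slip in the statement that also appears in the last line of the paper's proof. The paper carries $\mathcal{F}_1(\bar{x}_1)$ through the whole derivation and only writes $\mathcal{F}_1(x_1^k)$ in the final display. The intended quantity is the initial gap $\mathcal{F}_j(x_j^0)-\mathcal{F}_j(x_j^*)$. With that reading, delete Step 4; your proof is then complete and coincides with the paper's. If you want a bound in terms of the observed decrease instead, the correct form for $k\ge1$ is
\[
\|x_j^k-x_j^*\|^2\le\frac{2}{C_j}\,\frac{\rho_j^k}{1-\rho_j^k}\big(\mathcal{F}_j(x_j^0)-\mathcal{F}_j(x_j^k)\big).
\]
It follows from $\mathcal{F}_j(x_j^0)-\mathcal{F}_j(x_j^k)=e_j(x_j^0)-e_j(x_j^k)\ge(1-\rho_j^k)e_j(x_j^0)$. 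The example above attains equality here, so the factor $(1-\rho_j^k)^{-1}$ cannot be removed.
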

\begin{proof}
First part of the proof is similar to the result presented by \cite{ferris1994parallel}.
We now derive the linear root rate of convergence for $\{\mathbf{x}^{i}\}$. Utilizing the strong convexity of $\mathcal{F}_{1}$ and $\mathcal{F}_{2}$, we obtain the following relations
 \[||\nabla \mathcal{F}_{1}(x_{1})||||x^{i}_{1}-\bar{x}_{1}||=||\nabla \mathcal{F}_{1}(x^{i}_{1})-\nabla \mathcal{F}_{1}(\bar{x}_{1})||x^{i}_{1}-\bar{x}_{1}|| \ge \nabla \mathcal{F}_{1}(x^{i}_{1})-\nabla \mathcal{F}_{1}(\bar{x}_{1})(x^{i}_{1}-\bar{x}_{1}) \ge k_{1}||x^{i}_{1}-\bar{x}_{1}||^2 \]
 \[||\nabla \mathcal{F}_{2}(x^{i}_{2})||||x^{i}_{2}-\bar{x}_{2}||=||\nabla \mathcal{F}_{1}(x^{i}_{2})-\nabla \mathcal{F}_{2}(\bar{x}_{2})||x^{i}_{2}-\bar{x}_{2}|| \ge \nabla \mathcal{F}_{2}(x^{i}_{2})-\nabla \mathcal{F}_{2}(\bar{x}_{1})(x^{i}_{2}-\bar{x}_{2}) \ge k_{2}||x^{i}_{2}-\bar{x}_{2}||^2\]
 This implies 
 \[\mathcal{F}_{1}(x^{i}_{1})-\mathcal{F}_{1}(x^{i+1}_{1}) \ge \alpha_{1}k_{1}^{2}||x^{i}_{1}-\bar{x}_{1}||^2\]
 \[\mathcal{F}_{2}(x^{i}_{2})-\mathcal{F}_{2}(x^{i+1}_{2}) \ge \alpha_{2}k_{2}^{2}||x^{i}_{2}-\bar{x}_{2}||^2\]
 Now using quadratic bound lemma we get
 \[(\mathcal{F}_{1}(x^{i}_{1})-\mathcal{F}_{1}(x^{i+1}_{1}) \ge \frac{2\alpha_{1}k_{1}^{2}}{K_{1}} \]
 \[(\mathcal{F}_{2}(x^{i}_{2})-\mathcal{F}_{2}(x^{i+1}_{2}) \ge \frac{2\alpha_{2}k_{2}^{2}}{K_{2}}\]
 which can be rewrite as
 \[\Big(1-\frac{2\alpha_{1}k_{1}^{2}}{K_{1}}\Big)(\mathcal{F}_{1}(x^{i}_{1})-\mathcal{F}_{1}(\bar{x}_{1})) \ge \mathcal{F}_{1}(x^{i+1}_{1})-\mathcal{F}_{1}(\bar{x}_{1})\]
 \[\Big(1-\frac{2\alpha_{2}k_{2}^{2}}{K_{2}}\Big)(\mathcal{F}_{2}(x^{i}_{2})-\mathcal{F}_{2}(\bar{x}_{2})) \ge \mathcal{F}_{2}(x^{i+1}_{2})-\mathcal{F}_{2}(\bar{x}_{2})\]
 Using the same argument iteratively we able to get
 \[(\mathcal{F}_{1}(x^{i}_{1})-\mathcal{F}_{1}(\bar{x}_{1})) \le \Big(1-\frac{2\alpha_{1}k_{1}^{2}}{K_{1}}\Big) (\mathcal{F}_{1}(x^{0}_{1})-\mathcal{F}_{1}(\bar{x}_{1}))\]
 \[(\mathcal{F}_{2}(x^{i}_{2})-\mathcal{F}_{2}(\bar{x}_{2})) \le \Big(1-\frac{2\alpha_{2}k_{2}^{2}}{K_{2}}\Big)(\mathcal{F}_{2}(x^{0}_{2})-\mathcal{F}_{2}(\bar{x}_{2}))\]
 This implies 
 \[(\mathcal{F}_{1}(x^{i}_{1})-\mathcal{F}_{1}(\bar{x}_{1}))+(\mathcal{F}_{2}(x^{i}_{2})-\mathcal{F}_{2}(\bar{x}_{2})) \le \Big(1-\frac{2\alpha_{1}k_{1}^{2}}{K_{1}}\Big) (\mathcal{F}_{1}(x^{0}_{1})-\mathcal{F}_{1}(\bar{x}_{1}))+\Big(1-\frac{2\alpha_{2}k_{2}^{2}}{K_{2}}\Big)(\mathcal{F}_{2}(x^{0}_{2})-\mathcal{F}_{2}(\bar{x}_{2}))\]
 Again from strong convexity property we have that 
\[(\mathcal{F}_{1}(x^{i}_{1})-\mathcal{F}_{1}(\bar{x}_{1})) \ge \frac{k_{1}}{2}||x^{i}_{1}-\bar{x}_{1}||^{2}+ \nabla \mathcal{F}_{1} (\bar{x}_{1})(x^{i}_{1}-\bar{x}_{1}) \]
\[(\mathcal{F}_{2}(x^{i}_{2})-\mathcal{F}_{2}(\bar{x}_{2})) \ge \frac{k_{2}}{2}||x^{i}_{2}-\bar{x}_{2}||^{2} + \nabla \mathcal{F}_{2}(\bar{x}_{2})(x^{i}_{2}-\bar{x}_{2})\]
 or 
 \[||x^{i}_{1}-\bar{x}_{1}|| \le \Big(\frac{2}{k_{1}}(\mathcal{F}_{1}(x^{i}_{1})-\mathcal{F}_{1}(\bar{x}_{1})) \Big)^{1/2}\]
 \[||x^{i}_{2}-\bar{x}_{2}|| \le \Big(\frac{2}{k_{2}}(\mathcal{F}_{2}(x^{i}_{2})-\mathcal{F}_{2}(\bar{x}_{2})) \Big)^{1/2}\]
This implies 
 \[(\mathcal{F}_{1}(x^{i}_{1})-\mathcal{F}_{1}(\bar{x}_{1}))+(\mathcal{F}_{2}(x^{i}_{2})-\mathcal{F}_{2}(\bar{x}_{2})) \le \Big(1-\frac{2\alpha_{1}k_{1}^{2}}{K_{1}}\Big) (\mathcal{F}_{1}(x^{0}_{1})-\mathcal{F}_{1}(\bar{x}_{1}))+\Big(1-\frac{2\alpha_{2}k_{2}^{2}}{K_{2}}\Big)(\mathcal{F}_{2}(x^{0}_{2})-\mathcal{F}_{2}(\bar{x}_{2}))\] 
 Now using above we able to achieve 
 \[||\mathbf{x^{k}}-\mathbf{x^*}|| \le ||x^{i}_{1}-\bar{x}_{1}||+||x^{i}_{2}-\bar{x}_{2}|| \le \]
 \[ \Big(\frac{2}{C_{1}}(\mathcal{F}_{1}(x^{0}_{1})-\mathcal{F}_{1}(x_{1}^{k})) \Big)^{1/2}\Big( 1-\frac{c_{1}\beta_{1}C_{1}^2}{K^2}\Big)^{k/2} 
 +\Big(\frac{2}{C_{2}}(\mathcal{F}_{2}(x^{0}_{2})-\mathcal{F}_{2}(x_{2}^{k})) \Big)^{1/2}\Big( 1-\frac{c_{2}\beta_{2}C_{2}^2}{K^2}\Big)^{k/2}\]
\end{proof}
In practice, solving the local minimization problem exactly is often computationally unfeasible. The below theorem give guarantee about the convergence of such solution.
\begin{theorem}
Let the functionals $\mathcal{F}_{1}$ and $\mathcal{F}_{2}$ have Lipschitz-continuous gradients and be bounded from below, as stated in Lemma 4.2. Suppose the space $\mathcal{K}$ is partitioned into two orthogonal subspaces, $\mathcal{K}=\mathcal{K}_{1}^{*} \oplus \mathcal{K}_{2}^{*}$, as defined in Theorem 4.1. Additionally, assume that the following holds for some constant $c > 0$:
\[
\|\mathcal{R}_{\text{in} \oplus \text{out}} \mathbf{x}\|^{2} + \|\mathcal{R}_{\text{mid}} \mathbf{x} \|^{2} \ge c\|\mathbf{x}\|^{2} \quad \forall \mathbf{x} \in \mathcal{K}
\]
Let $\alpha_{1}, \alpha_{2} > 0$, and assume that Algorithm 1 is relaxed to accept an inexact solution $\mathbf{x}^{k+1}$ for the local minimization problem whenever the following descent conditions hold:
\[
\mathcal{F}_{1}(\mathbf{x}^k) - \mathcal{F}_{1}(\mathbf{x}^{k+1}) \ge \alpha_{1} \|\mathcal{R}_{\text{in} \oplus \text{out}} \nabla \mathcal{F}_{1}(\mathbf{x}^k)\|^2
\]
\[
\mathcal{F}_{2}(\mathbf{x}^k) - \mathcal{F}_{2}(\mathbf{x}^{k+1}) \ge \alpha_{2} \|\mathcal{R}_{\text{mid}} \nabla \mathcal{F}_{2}(\mathbf{x}^k)\|^2
\]
Then, every limit point of the sequence $\{\mathbf{x}^{k}\}$ generated by this relaxed algorithm is stationary, that is,
\[
\lim_{k \to \infty} \nabla \mathcal{F}(\mathbf{x}^{k}) = \begin{Bmatrix}
\lim_{k \to \infty} \nabla \mathcal{F}_{1}(\mathbf{x}^{k}_{1}) \\
\lim_{k \to \infty} \nabla \mathcal{F}_{2}(\mathbf{x}^{k}_{2})
\end{Bmatrix} = \mathbf{0}
\]
Additionally, if $\mathcal{F}_{1}$ and $\mathcal{F}_{2}$ are strongly convex and their gradients are Lipschitz continuous, then $\lim_{k \rightarrow \infty}\mathbf{x}^{k} = \mathbf{x}^{*}$, which is the unique minimizer of $\mathcal{F}$.
\end{theorem}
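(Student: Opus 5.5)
The plan is to follow the Ferris--Mangasarian sufficient-decrease argument already used in Theorem 4.1, but now taking the two descent conditions in the statement as hypotheses instead of deriving them from exact local minimization. We interpret $\mathcal{F}$ as the global functional whose restrictions to $\mathcal{K}_{1}^{*}$ and $\mathcal{K}_{2}^{*}$ are $\mathcal{F}_{1},\mathcal{F}_{2}$. Since the decomposition $\mathcal{K}=\mathcal{K}_{1}^{*}\oplus\mathcal{K}_{2}^{*}$ is orthogonal, we have $\nabla\mathcal{F}(\mathbf{x}) = (\mathcal{R}_{\text{in}\oplus\text{out}}\nabla\mathcal{F}(\mathbf{x}),\,\mathcal{R}_{\text{mid}}\nabla\mathcal{F}(\mathbf{x}))$, and these two components are exactly the restricted gradients appearing in the descent conditions.

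\textbf{Step 1 (telescoping).} Sum the two descent inequalities over $k=0,\dots,N$. Because $\mathcal{F}_{1},\mathcal{F}_{2}$ are bounded from below (Lemma 4.2), the left sides telescope to quantities bounded by $\mathcal{F}_{i}(\mathbf{x}^{0})-\inf\mathcal{F}_{i}<\infty$. This gives
\[
\sum_{k}\alpha_{1}\|\mathcal{R}_{\text{in}\oplus\text{out}}\nabla\mathcal{F}_{1}(\mathbf{x}^{k})\|^{2}<\infty,
\qquad
\sum_{k}\alpha_{2}\|\mathcal{R}_{\text{mid}}\nabla\mathcal{F}_{2}(\mathbf{x}^{k})\|^{2}<\infty ,
\]
so both restricted gradients tend to zero. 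Setting $\alpha=\min(\alpha_1,\alpha_2)$ and applying the stability condition with $\mathbf{x}=\nabla\mathcal{F}(\mathbf{x}^{k})$, we obtain
\[
\mathcal{F}(\mathbf{x}^{k})-\mathcal{F}(\mathbf{x}^{k+1})\ \ge\ \alpha c\,\|\nabla\mathcal{F}(\mathbf{x}^{k})\|^{2},
\]
and hence $\nabla\mathcal{F}(\mathbf{x}^{k})\to 0$.

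\textbf{Step 2 (limit points).} If $\mathbf{x}^{k_j}\to\bar{\mathbf{x}}$, then continuity of $\nabla\mathcal{F}$ (it is Lipschitz, as shown in the proof of Theorem 4.1) gives $\nabla\mathcal{F}(\bar{\mathbf{x}})=\lim_j\nabla\mathcal{F}(\mathbf{x}^{k_j})=0$. So every limit point is stationary, and the componentwise limits in the statement follow.

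\textbf{Step 3 (strongly convex case).} Under strong convexity with constants $C_1,C_2$, the unique minimizer $\mathbf{x}^{*}$ exists. Strong monotonicity of the gradient gives
\[
C\|\mathbf{x}^{k}-\mathbf{x}^{*}\|^{2}\le\langle\nabla\mathcal{F}(\mathbf{x}^{k})-\nabla\mathcal{F}(\mathbf{x}^{*}),\mathbf{x}^{k}-\mathbf{x}^{*}\rangle\le\|\nabla\mathcal{F}(\mathbf{x}^{k})\|\,\|\mathbf{x}^{k}-\mathbf{x}^{*}\|,
\]
with $C=\min(C_1,C_2)$. Therefore $\|\mathbf{x}^{k}-\mathbf{x}^{*}\|\le C^{-1}\|\nabla\mathcal{F}(\mathbf{x}^{k})\|\to0$. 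This needs neither boundedness of the iterates nor any subsequence argument. Alternatively, one could repeat the linear-rate recursion of Theorem 4.3, replacing $1/(2K)$ by $\alpha_i$.

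The main obstacle is Step 1's passage from the two separate descent conditions, stated for $\mathcal{F}_1$ and $\mathcal{F}_2$, to a single decrease inequality for $\mathcal{F}$. The difficulty is that the algorithm updates both blocks simultaneously. I would handle this by using the acceptance rule \eqref{eq:32}, $\mathcal{F}(\mathbf{x}^{k+1})\le\gamma_1\mathcal{F}_1(\mathbf{x}^{1,k})+\gamma_2\mathcal{F}_2(\mathbf{x}^{2,k})$, with $\gamma_1+\gamma_2=1$ and both $\gamma_i$ bounded below. This gives a decrease of $\min(\gamma_i\alpha_i)\,c\,\|\nabla\mathcal{F}(\mathbf{x}^{k})\|^2$, exactly as in the last display of the proof of Theorem 4.1. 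If that fails, the fallback is to treat the two blocks as decoupled sequences $\{\mathbf{x}_1^k\},\{\mathbf{x}_2^k\}$ and apply the scalar argument to each separately.
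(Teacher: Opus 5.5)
The paper gives no proof of this theorem; it passes directly to Corollary 4.5, whose proof is also empty. The evident intent is the last display in the proof of Theorem 4.1, with $\frac{1}{2K}$ replaced by $\alpha_1,\alpha_2$, and your fallback via \eqref{eq:32} is exactly that argument. As written, however, your first telescoping in Step 1 and your Step 3 go through only if $\mathcal{F}(\mathbf{x})=\mathcal{F}_1(\mathbf{x}_1)+\mathcal{F}_2(\mathbf{x}_2)$ is separable across the orthogonal blocks. In that case the proposal is correct, but the stability condition and \eqref{eq:32} play no role, since the separate telescoping already sends both block gradients to zero.

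In the coupled setting the paper actually uses, $\mathcal{F}_1(y_1)=\mathcal{F}(\mathbf{x}^k+\mathcal{P}_{\text{in}\oplus\text{out}}y_1)$ changes with $k$ (see the proof of Theorem 4.1). There the separate telescoping is unavailable, and the route through \eqref{eq:32} needs three things you should state explicitly. Write $\mathcal{R}_1=\mathcal{R}_{\text{in}\oplus\text{out}}$ and $\mathcal{R}_2=\mathcal{R}_{\text{mid}}$. First, the descent hypotheses must be read at the trial points: $\mathcal{F}(\mathbf{x}^k)-\mathcal{F}(\mathbf{x}^{l,k})\ge\alpha_l\|\mathcal{R}_l\nabla\mathcal{F}(\mathbf{x}^k)\|^2$ for $l=1,2$. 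Taken literally at $\mathbf{x}^{k+1}$, they give \eqref{eq:32} nothing to act on. Second, the weights must satisfy $\gamma_1+\gamma_2=1$. Third, $\mathcal{F}$ itself must be bounded below along the iterates, because that is what you then telescope.

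In this coupled setting, Step 3 genuinely fails. Strong monotonicity of $\nabla\mathcal{F}$ with constant $\min(C_1,C_2)$ follows from strong convexity of $\mathcal{F}_1,\mathcal{F}_2$ only in the separable case: strong convexity of $\mathcal{F}$ along each block does not make $\mathcal{F}$ strongly convex. Take $\mathbb{R}^2$ with coordinate blocks (so $c=1$) and $\mathcal{F}(\mathbf{x})=\tfrac12(x_1+x_2)^2$. Every one-block slice is $1$-strongly convex with $1$-Lipschitz gradient, $\mathcal{F}\ge 0$, and exact block minimization satisfies the trial-point descent conditions with $\alpha_l=\tfrac12$. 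Yet the minimizers fill the line $x_1=-x_2$, so there is no unique $\mathbf{x}^*$. Your inequality $C\|\mathbf{x}-\mathbf{x}^*\|^2\le\langle\nabla\mathcal{F}(\mathbf{x})-\nabla\mathcal{F}(\mathbf{x}^*),\mathbf{x}-\mathbf{x}^*\rangle$ then fails for any two distinct points on that line.

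A second example shows why the lower bound must be on $\mathcal{F}$. With $\mathcal{F}(\mathbf{x})=\tfrac12x_1^2+\tfrac12x_2^2+2x_1x_2$, every slice is still strongly convex and bounded below, but $\mathcal{F}$ is unbounded below. Taking $\mathbf{x}^{k+1}$ to be the better trial point, starting from $\mathbf{x}^0=(1,0)$, gives $\mathcal{F}(\mathbf{x}^k)\to-\infty$ with the iterates diverging.

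The repair is to assume that $\mathcal{F}$ itself is $C$-strongly convex. Then your estimate $\|\mathbf{x}^k-\mathbf{x}^*\|\le C^{-1}\|\nabla\mathcal{F}(\mathbf{x}^k)\|\to 0$ is correct. It is simpler than the rate recursion of Theorem 4.3: it gives convergence without a rate, but needs neither boundedness of the iterates nor a subsequence argument.
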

\begin{corollary}
Let $\mathcal{F}_{1}$ and $\mathcal{F}_{2}$ be strongly convex functionals with bounded-below, Lipschitz continuous gradients, satisfying the space decomposition $\mathcal{K} = \mathcal{K}_{1}^{*} \oplus \mathcal{K}_{2}^{*}$ from Theorem~4.1. Furthermore, let the synchronization step in Algorithm~1 be defined by $\mathbf{x}^{k+1} = \mathbf{x}^{k} + \gamma_{1} \mathcal{P}_{\mathrm{in} \oplus \mathrm{out}} y_{1}^{k} + \gamma_{2} \mathcal{P}_{\mathrm{mid}} y_{2}^{k}$ for positive step-sizes $\gamma_{1}, \gamma_{2}$. Then, the sequence $\{\mathbf{x}^{k}\}$ converges to the unique minimizer $\mathbf{x}^{*}$ of $\mathcal{F}$ as $k \to \infty$ under either of the following conditions
\begin{align*}
 \gamma = \gamma_{1} + \gamma_{2} \le 1 \textit{ and }
 \gamma = \gamma_{1} + \gamma_{2} < 2 
\end{align*}
, provided the generated spike matrix $S$ is symmetric.
\end{corollary}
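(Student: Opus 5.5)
The plan is to reduce the corollary to the strongly convex case of Theorem~4.4. This means showing that the synchronization step $\mathbf{x}^{k+1}=\mathbf{x}^{k}+\gamma_{1}\mathcal{P}_{\mathrm{in}\oplus\mathrm{out}}y_{1}^{k}+\gamma_{2}\mathcal{P}_{\mathrm{mid}}y_{2}^{k}$ still yields a sufficient decrease of the form used in Theorem~4.2. That is, I want
\[
\mathcal{F}(\mathbf{x}^{k})-\mathcal{F}(\mathbf{x}^{k+1}) \ge \alpha\bigl(\|\mathcal{R}_{\mathrm{in}\oplus\mathrm{out}}\nabla\mathcal{F}(\mathbf{x}^{k})\|^{2}+\|\mathcal{R}_{\mathrm{mid}}\nabla\mathcal{F}(\mathbf{x}^{k})\|^{2}\bigr)
\]
for some $\alpha>0$. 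Once this is in hand, the stability condition $\|\mathcal{R}_{\mathrm{in}\oplus\mathrm{out}}\mathbf{x}\|^{2}+\|\mathcal{R}_{\mathrm{mid}}\mathbf{x}\|^{2}\ge c\|\mathbf{x}\|^{2}$ gives $\mathcal{F}(\mathbf{x}^{k})-\mathcal{F}(\mathbf{x}^{k+1})\ge \alpha c\|\nabla\mathcal{F}(\mathbf{x}^{k})\|^{2}$. Since $\mathcal{F}$ is bounded below, $\nabla\mathcal{F}(\mathbf{x}^{k})\to 0$, and strong convexity then gives $\mathbf{x}^{k}\to\mathbf{x}^{*}$ exactly as in the last part of Theorem~4.4.

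\textbf{Case $\gamma=\gamma_{1}+\gamma_{2}\le 1$.} I would write $\mathbf{x}^{k+1}$ as a convex combination
\[
\mathbf{x}^{k+1}=(1-\gamma)\mathbf{x}^{k}+\gamma_{1}\mathbf{x}^{1,k}+\gamma_{2}\mathbf{x}^{2,k}.
\]
Convexity of $\mathcal{F}$ then gives $\mathcal{F}(\mathbf{x}^{k+1})\le(1-\gamma)\mathcal{F}(\mathbf{x}^{k})+\gamma_{1}\mathcal{F}(\mathbf{x}^{1,k})+\gamma_{2}\mathcal{F}(\mathbf{x}^{2,k})$, which is the acceptance inequality \eqref{eq:32} with $\beta_{i}=\gamma_{i}$. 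Rearranging and inserting the per-block decrease bounds $\mathcal{F}(\mathbf{x}^{k})-\mathcal{F}(\mathbf{x}^{i,k})\ge\frac{1}{2K}\|\mathcal{R}_{\cdot}\nabla\mathcal{F}(\mathbf{x}^{k})\|^{2}$ from the proof of Theorem~4.2 yields the required decrease with $\alpha=\min(\gamma_{1},\gamma_{2})/(2K)$. Symmetry is not even needed in this case.

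\textbf{Case $\gamma<2$.} This is the main obstacle, since the iterate is no longer a convex combination. Here I would use that $\mathcal{F}$ is the quadratic $\frac12 x^{T}Sx-G^{T}x$ with $S$ symmetric positive definite. Symmetry of $S$ is exactly what makes the SPIKE system a gradient system, so that the local minimizers satisfy $y_{1}^{k}=-(\mathcal{R}_{\mathrm{in}\oplus\mathrm{out}}S\mathcal{P}_{\mathrm{in}\oplus\mathrm{out}})^{-1}\mathcal{R}_{\mathrm{in}\oplus\mathrm{out}}\nabla\mathcal{F}(\mathbf{x}^{k})$, and similarly for $y_{2}^{k}$. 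Set $d=\gamma_{1}d_{1}+\gamma_{2}d_{2}$ with $d_{i}=\mathcal{P}y_{i}^{k}$. Expanding exactly,
\[
\mathcal{F}(\mathbf{x}^{k}+d)-\mathcal{F}(\mathbf{x}^{k})=\langle\nabla\mathcal{F}(\mathbf{x}^{k}),d\rangle+\tfrac12 d^{T}Sd .
\]
I would bound $d^{T}Sd$ by Cauchy--Schwarz in the $S$-inner product:
\[
d^{T}Sd\le\gamma\bigl(\gamma_{1}d_{1}^{T}Sd_{1}+\gamma_{2}d_{2}^{T}Sd_{2}\bigr).
\]
I would then use the optimality identity $\langle\nabla\mathcal{F}(\mathbf{x}^{k}),d_{i}\rangle=-d_{i}^{T}Sd_{i}$. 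Together these give
\[
\mathcal{F}(\mathbf{x}^{k+1})-\mathcal{F}(\mathbf{x}^{k})\le-\bigl(1-\tfrac{\gamma}{2}\bigr)\bigl(\gamma_{1}d_{1}^{T}Sd_{1}+\gamma_{2}d_{2}^{T}Sd_{2}\bigr).
\]
This is a strict decrease whenever $\gamma<2$. Finally, $d_{i}^{T}Sd_{i}\ge K^{-1}\|\mathcal{R}_{\cdot}\nabla\mathcal{F}(\mathbf{x}^{k})\|^{2}$, using the Lipschitz bound on the restricted Hessian, recovers the sufficient-decrease inequality with $\alpha=(1-\gamma/2)\min(\gamma_{1},\gamma_{2})/K$.

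The delicate points are verifying the optimality identity when the blocks are constrained rather than free, and making sure the Cauchy--Schwarz step is legitimate, which needs $S$ symmetric positive definite. For the constrained case I would replace the equality by the variational-inequality version $\langle\nabla\mathcal{F}(\mathbf{x}^{k}),d_{i}\rangle\le-d_{i}^{T}Sd_{i}$, which follows from the first-order optimality condition of the local projected minimization over the convex set $\mathcal{K}_{i}$ tested against $0$. This inequality goes in the needed direction, so the same estimate follows.
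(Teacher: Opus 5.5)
The paper's proof of this corollary is empty (the proof environment has no content), so there is no argument of the authors to compare against. Your proposal supplies one, and in the smooth, unconstrained setting of Section~4.1 (where stationarity means $\nabla\mathcal{F}=0$, as in Theorem~4.1) it is correct.

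For $\gamma\le 1$, rewriting the step as $\mathbf{x}^{k+1}=(1-\gamma)\mathbf{x}^{k}+\gamma_{1}\mathbf{x}^{1,k}+\gamma_{2}\mathbf{x}^{2,k}$ settles the case using only convexity of $\mathcal{F}$ and the block decrease $\frac{1}{2K}\|\mathcal{R}\nabla\mathcal{F}(\mathbf{x}^{k})\|^{2}$. That bound is in the proof of Theorem~4.1; the paper has no Theorem~4.2, only the unfinished Lemma~4.2.

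For $\gamma<2$, you combine the exact quadratic expansion with $d^{T}Sd\le\gamma(\gamma_{1}d_{1}^{T}Sd_{1}+\gamma_{2}d_{2}^{T}Sd_{2})$ and $\langle\nabla\mathcal{F}(\mathbf{x}^{k}),d_{i}\rangle=-d_{i}^{T}Sd_{i}$. This is the standard energy-norm estimate for a weighted additive Schwarz (damped block Jacobi) step. The constant $2$ cannot be enlarged: for strongly coupled blocks, $I-\gamma_{1}T_{1}-\gamma_{2}T_{2}$, with $T_{i}$ the $S$-orthogonal projections onto the two blocks, has an eigenvalue close to $1-\gamma$. Because you conclude directly from the summed decrease, the stability condition and strong convexity, you also avoid relying on Theorem~4.4, which the paper likewise leaves unproved.

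Two points need fixing.

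\textbf{Hidden global convexity.} You use convexity of the global $\mathcal{F}$ (for $\gamma\le 1$) and positive definiteness of $S$ (for $\gamma<2$). The hypothesis as written only makes the block functionals $\mathcal{F}_{1},\mathcal{F}_{2}$ strongly convex, i.e.\ only the diagonal blocks of $S$ positive definite. For example, $S=\begin{pmatrix}2&3\\3&2\end{pmatrix}$ has that property but is indefinite. State explicitly that you read the hypothesis as strong convexity of $\mathcal{F}$ itself; for a symmetric quadratic this is exactly what the presupposed unique minimizer $\mathbf{x}^{*}$ forces. Relatedly, your remark that symmetry is not needed for $\gamma\le 1$ holds only for convergence to $\arg\min\mathcal{F}$. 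Without symmetry that point solves $\frac{1}{2}(S+S^{T})\mathbf{x}=G$, not the SPIKE system $S\mathbf{x}=G$.

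\textbf{The constrained aside.} Your closing remark on the constrained case does not go through. The variational inequality does give the energy decrease, but two things fail.
(i) $\gamma<2$ permits some $\gamma_{i}>1$, and then the iterate can leave $\mathcal{K}$. With the constraint $x\ge 0$, $x^{k}_{1}=1$, block minimizer $0$ and $\gamma_{1}=1.5$, one gets $x^{k+1}_{1}=-0.5$.
(ii) The final bound $d_{i}^{T}Sd_{i}\ge K^{-1}\|\mathcal{R}\nabla\mathcal{F}(\mathbf{x}^{k})\|^{2}$ is false once constraints are active. At the constrained minimizer $d_{i}=0$, while $\mathcal{R}\nabla\mathcal{F}\neq 0$ in general.
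A constrained version would need $\gamma_{i}\le 1$ (or a projection step) and a stationarity measure such as $\psi$ in \eqref{eq:33}, together with an error bound relating it to $\|d_{i}\|$. Since the corollary sits in the smooth section, the simplest fix is to drop that remark.
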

\begin{proof}
\end{proof}
Next, we apply the above framework to treat the obstacle problem as a nonsmooth optimization problem.
\subsection{Parallel distributive analysis for obstacle problem : nonsmooth optimization}
Before we describe the convergence analysis of Algorithm 1, we first present the definitions and preliminary lemmas required for our main analysis. we recall the definition of optimality conditions
\begin{definition}[Optimality Function]
Consider our original problem $\min_{x \in \mathcal{K}} \mathcal{F}(x)$ where $\mathcal{F}: \mathbb{R}^n \rightarrow \mathbb{R}$ and $\mathcal{K} \subseteq \mathbb{R}^n$. A nonconstant, lower semicontinuous mapping $\zeta: \mathcal{K} \rightarrow \mathbb{R}$ is termed an optimality function if it satisfies:
\begin{enumerate}
    \item $\zeta(x) \ge 0$ for all $x \in \mathcal{K}$.
    \item $\zeta(x) = 0$ whenever $x \in \arg\min_{y \in \mathcal{K}} \mathcal{F}(y)$.
\end{enumerate}
\end{definition}
A point $x \in \mathcal{K}$ is classified as stationary with respect to 
the optimality function $\zeta$ if and only if $\zeta(x) = 0$. In the 
unconstrained, continuously differentiable setting (where 
$\mathcal{K} = \mathbb{R}^{n}$ and $\mathcal{F} \in C^{1}(\mathbb{R}^{n})$), 
$\zeta(x)$ simplifies naturally to the standard gradient norm 
$\|\nabla\mathcal{F}(x)\|$. Conversely, when constraints dictate that 
$\mathcal{K}$ is a closed convex subset of~$\mathbb{R}^{n}$, the first-order 
necessary conditions yield the minimum principle optimality function
\begin{align}
\psi(x) := -\min_{y} \left\{ \langle \nabla \mathcal{F}(x), y-x \rangle 
\;\middle|\; y \in \mathcal{K}, \, \|y-x\|_{\infty} \le \alpha \right\}.
\label{eq:33}
\end{align}
\begin{lemma}
If $x$ is a stationary point for the optimality function $\psi$ given above equation (33), then $x$ solve the minimization problem of LCS formed in Section 1.
\end{lemma}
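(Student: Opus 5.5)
The plan is to show that $\psi(x)=0$ forces the first-order variational inequality on $\mathcal{K}$. Convexity of $\mathcal{F}$ then upgrades this to global minimality, and the KKT system \eqref{eq:9}--\eqref{eq:11} recovers the LCS. Throughout, $\mathcal{F}(x)=\frac12 x^TAx-\mathbf{f}^Tx$ with $A$ symmetric positive definite (an $M$-matrix, as in the earlier sections), and $\mathcal{K}=\{x:\ x\ge\boldsymbol{\psi_{ob}}\}$, which is closed and convex.

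\textbf{Step 1: the local inequality.} The choice $y=x$ is admissible in \eqref{eq:33}, so $\psi(x)\ge 0$. If $\psi(x)=0$, then the minimum in \eqref{eq:33} equals $0$, which means
\[
\langle \nabla\mathcal{F}(x), y-x\rangle \ge 0 \quad\text{for all } y\in\mathcal{K} \text{ with } \|y-x\|_\infty\le\alpha .
\]

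\textbf{Step 2: from local to global.} This is the only nontrivial step, and it is where the restriction $\|y-x\|_\infty\le\alpha$ must be removed. Take an arbitrary $z\in\mathcal{K}$ and set $y=x+t(z-x)$ with $t\in(0,1]$ small enough that $t\|z-x\|_\infty\le\alpha$. By convexity of $\mathcal{K}$, $y\in\mathcal{K}$. Applying Step 1 to this $y$ gives $t\langle\nabla\mathcal{F}(x),z-x\rangle\ge0$, and dividing by $t>0$ yields
\[
\langle\nabla\mathcal{F}(x),z-x\rangle\ge0 \quad\text{for all } z\in\mathcal{K}.
\]
Since $\mathcal{F}$ is convex, $\mathcal{F}(z)\ge\mathcal{F}(x)+\langle\nabla\mathcal{F}(x),z-x\rangle\ge\mathcal{F}(x)$. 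Hence $x$ solves the QP \eqref{eq:8}.

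\textbf{Step 3: recovering the complementarity system.} It remains to show the equivalence with \eqref{eq:9}--\eqref{eq:11}. Here $\nabla\mathcal{F}(x)=Ax-\mathbf{f}$, and $x\ge\boldsymbol{\psi_{ob}}$ holds because $x\in\mathcal{K}$.
\begin{itemize}
\item Choosing $z=x+e_j$, which lies in $\mathcal{K}$, gives $(Ax-\mathbf{f})_j\ge0$ for every $j$. This is \eqref{eq:10}.
\item Choosing $z=\boldsymbol{\psi_{ob}}$ gives $\langle Ax-\mathbf{f},\boldsymbol{\psi_{ob}}-x\rangle\ge0$. Both $Ax-\mathbf{f}\ge0$ and $x-\boldsymbol{\psi_{ob}}\ge0$, so this inner product is also $\le 0$. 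It therefore equals $0$, which is \eqref{eq:11}.
\end{itemize}

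Uniqueness follows from strict convexity of $\mathcal{F}$, or equivalently from the $M$-matrix uniqueness result (Theorem 3.4). By the equivalence in Theorem 3.2, this minimizer is also the solution of the partitioned LCS.
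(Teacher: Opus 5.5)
Your proof is correct. The paper gives no proof of this lemma; the surrounding text only appeals to ``first-order necessary conditions'' in the style of Ferris and Mangasarian. So there is nothing to compare against, and your three steps supply exactly what is missing.

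\begin{itemize}
\item The rescaling $y=x+t(z-x)$ removes the box constraint $\|y-x\|_\infty\le\alpha$, using convexity of $\mathcal{K}$ and $\alpha>0$. You should state $\alpha>0$ explicitly: for $\alpha=0$ one has $\psi\equiv 0$ and the lemma fails.
\item Convexity of $\mathcal{F}$ is what turns the resulting variational inequality into global minimality. For nonconvex $\mathcal{F}$, $\psi(x)=0$ yields only the minimum principle, not minimality. Your use of the convex quadratic structure of \eqref{eq:8} is therefore essential.
\item The test vectors $x+e_j$ and $\boldsymbol{\psi_{ob}}$ then recover \eqref{eq:10} and \eqref{eq:11}. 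This step needs only symmetry of $A$, so that $\nabla\mathcal{F}(x)=A\mathbf{x}-\mathbf{f}$; it does not need convexity.
\end{itemize}

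Your parenthetical ``(an $M$-matrix, as in the earlier sections)'' conflates two hypotheses. An $M$-matrix need not be symmetric. What you actually use is that $A$ is symmetric positive (semi)definite, which holds for the discrete Laplacian; say that instead.

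I would drop your final sentence.
\begin{itemize}
\item The lemma concerns only the system of Section 1, which you have already recovered. Uniqueness is not part of the claim, and strict convexity gives it anyway.
\item Theorem 3.4 is about the partitioned systems \eqref{eq:29}--\eqref{eq:30}, not \eqref{eq:9}--\eqref{eq:11}.
\item The equivalence asserted in Theorem 3.2 is not established in the paper. Multiplying by the nonnegative but non-diagonal block inverse $D^{-1}$ preserves the inequality $A\mathbf{x}-\mathbf{f}\ge 0$. It does not preserve the complementarity condition $\mathbf{x}\cdot(A\mathbf{x}-\mathbf{f})=0$.
\end{itemize}
Citing either theorem would attach an unproven claim to an otherwise self-contained proof.
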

\begin{lemma}
Let $\mathcal{K} \subseteq \mathbb{R}^{n}$ be a closed convex set and $\mathcal{F} \in LC^{1}_{K}(\mathbb{R}^{n})$. 
For a given $x \in \mathcal{K}$, let the descent direction $d$ be defined by
\begin{align}
 d \in \arg\min \left\{ \langle \nabla \mathcal{F}(x), v \rangle \;\middle|\; v \in \mathcal{K}-x, \, \|v\|_{\infty} \le \alpha \right\},
 \label{eq:34}
\end{align}
and let $\kappa \in \{2^{-m}\}_{m=0}^{\infty}$ be the stepsize chosen via the Armijo condition
\begin{align}
 \mathcal{F}(x) - \mathcal{F}(x+\kappa d) \ge -\frac{\kappa}{2} \langle \nabla \mathcal{F}(x), d \rangle.
 \label{eq:35}
\end{align}
Then, the updated point $x_{\mathrm{new}} = x + \kappa d$ satisfies
\begin{align}
 \mathcal{F}(x) - \mathcal{F}(x_{\mathrm{new}}) \ge \vartheta(\psi(x)),
 \label{eq:36}
\end{align}
where $\psi(x)$ is the minimum principle optimality function defined in ~\eqref{eq:33}
, and $\vartheta(t) = \min \bigl\{ \frac{1}{2}t, \frac{1}{4\alpha^{2}K}t^{2} \bigr\}$ 
serves as a forcing function.
\end{lemma}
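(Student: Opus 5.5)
The argument combines three facts: the quadratic upper bound from the Descent Lemma ($\mathcal{F}\in LC^1_K$), the identity $\psi(x)=-\langle\nabla\mathcal{F}(x),d\rangle$ that comes from the choice of $d$ in \eqref{eq:34}, and a lower bound on the accepted Armijo step $\kappa$. Write $t:=\psi(x)=-\langle\nabla\mathcal{F}(x),d\rangle\ge 0$. Since $v=0$ is feasible in \eqref{eq:34}, we indeed have $t\ge0$. If $t=0$, then $\kappa=1$ passes \eqref{eq:35} trivially, because $\mathcal{F}(x+d)\le\mathcal{F}(x)$ follows from minimality together with the Descent Lemma applied to feasible convex combinations. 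More simply, \eqref{eq:36} reads $\mathcal{F}(x)-\mathcal{F}(x_{\rm new})\ge 0$, which holds by \eqref{eq:35}. So assume $t>0$.

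\textbf{Step 1 (sufficient decrease for small steps).} For any $\kappa\in(0,1]$, the point $x+\kappa d$ lies in $\mathcal{K}$ by convexity, since $d\in\mathcal{K}-x$. The Descent Lemma gives
\[
\mathcal{F}(x)-\mathcal{F}(x+\kappa d)\ge \kappa t-\tfrac{K}{2}\kappa^2\|d\|^2 .
\]
I will bound $\|d\|^2$ by $\alpha^2$. This is the delicate point: only $\|d\|_\infty\le\alpha$ is available, so in $\mathbb{R}^n$ one actually has $\|d\|_2^2\le n\alpha^2$. I would either interpret $\|\cdot\|$ in the Lipschitz condition as the norm dual to the one used in the constraint, or absorb the dimension factor into $K$. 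With $\|d\|^2\le\alpha^2$, the Armijo inequality \eqref{eq:35}, namely $\mathcal{F}(x)-\mathcal{F}(x+\kappa d)\ge\tfrac{\kappa}{2}t$, holds whenever $\kappa t-\tfrac{K}{2}\kappa^2\alpha^2\ge\tfrac{\kappa}{2}t$, i.e.\ whenever $\kappa\le t/(K\alpha^2)$.

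\textbf{Step 2 (lower bound on the backtracking step).} The step $\kappa$ is the first element of $\{1,\tfrac12,\tfrac14,\dots\}$ that satisfies \eqref{eq:35}. There are two cases.
\begin{itemize}
\item If $\kappa=1$ is accepted, then \eqref{eq:35} gives a decrease of at least $\tfrac12 t$.
\item Otherwise, $2\kappa$ was rejected. By Step 1 this forces $2\kappa>t/(K\alpha^2)$, so $\kappa> t/(2K\alpha^2)$. Then \eqref{eq:35} yields a decrease of at least $\tfrac{\kappa}{2}t\ge \tfrac{t^2}{4K\alpha^2}$.
\end{itemize}
In both cases the decrease is at least $\min\{\tfrac12 t,\tfrac{1}{4\alpha^2K}t^2\}=\vartheta(\psi(x))$, which is \eqref{eq:36}.

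\textbf{Main obstacle.} The only real difficulty is the norm mismatch in Step 1, between the $\ell_\infty$ trust region and the Euclidean Lipschitz constant. I expect to handle it by stating that $K$ is the Lipschitz constant of $\nabla\mathcal{F}$ measured from $\ell_\infty$ to $\ell_1$, which makes $\langle\nabla\mathcal{F}(y)-\nabla\mathcal{F}(x),d\rangle\le K\|y-x\|_\infty\|d\|_\infty$. Under that convention the Descent Lemma holds with $\|d\|_\infty^2\le\alpha^2$, and the constants come out exactly as stated. This matches the setting of Ferris and Mangasarian \cite{ferris1994parallel}.
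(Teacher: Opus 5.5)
The paper states this lemma without proof (it is evidently taken from Ferris and Mangasarian), so there is no argument of the authors' to compare with. Your proof is the standard Armijo backtracking argument and is correct. You have $t=\psi(x)=-\langle\nabla\mathcal{F}(x),d\rangle\ge 0$. The descent lemma shows that \eqref{eq:35} holds for every $\kappa\le t/(K\alpha^2)$, which also guarantees that the search terminates when $t>0$. The split into ``$\kappa=1$ accepted'' versus ``$2\kappa\le 1$ rejected'' then gives exactly $\vartheta(t)$. Reading $\kappa$ as the \emph{largest} admissible power of $2$, as you do, is essential.

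Your norm concern is not a technicality. With $K$ the usual Euclidean Lipschitz constant, the lemma as printed is false for $n\ge 3$, so your reinterpretation is forced, not optional. A counterexample:
\begin{itemize}
\item Take $\mathcal{K}=\mathbb{R}^n$, $\alpha=1$, $x=0$ and $\mathcal{F}(y)=\frac{K}{2}\|y\|_2^2+\epsilon\sum_i y_i$, with $\epsilon=K2^{-m}$ and $2^m\ge n/2$.
\item Then $d=-\mathbf{1}$ and $\psi(0)=n\epsilon$.
\item \eqref{eq:35} holds if and only if $\kappa\le\epsilon/K$, so $\kappa=\epsilon/K$ and the decrease is $n\epsilon^2/(2K)$.
\item But $\vartheta(\psi(0))=n^2\epsilon^2/(4K)$, which is larger once $n\ge 3$.
\end{itemize}
You should therefore present this as a correction of the statement, not a convention. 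Either $K$ is the $\ell_\infty\to\ell_1$ Lipschitz constant, or $K$ must be replaced by $nK$ in $\vartheta$. In the first case your descent-lemma bound follows by integrating $\langle\nabla\mathcal{F}(x+s\kappa d)-\nabla\mathcal{F}(x),\kappa d\rangle\le Ks\kappa^2\|d\|_\infty^2$ over $s\in[0,1]$.

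One sentence in your $t=0$ case is false and should be removed. $\kappa=1$ need not satisfy \eqref{eq:35}, and minimality of $d$ gives no inequality $\mathcal{F}(x+d)\le\mathcal{F}(x)$. For example, take $\mathcal{F}(y)=\frac12\|y\|_2^2$, $\mathcal{K}=\mathbb{R}^n$ and $x=0$. Every $v$ with $\|v\|_\infty\le\alpha$ solves \eqref{eq:34}, and for $d\neq 0$ one has $\mathcal{F}(\kappa d)>\mathcal{F}(0)$ for all $\kappa>0$. So the Armijo search never terminates. When $\psi(x)=0$ you must either take $d=0$ or treat the existence of an admissible $\kappa$ as part of the hypothesis. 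Under that proviso your fallback is fine: \eqref{eq:36} then only asks for a nonnegative decrease, which \eqref{eq:35} supplies.
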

\begin{remark}
The above results are important for accelerating convergence to achieve the optimality condition during iterations. This condition also provides the key concept of treating the problem as a system of time-dependent ODEs, allowing it to be solved using incremental step sizes.
\end{remark}
To solve the obstacle problem, we define the distributed optimality function 
$\zeta: \mathcal{K} \to \mathbb{R}$ by
\begin{align}
 \zeta(x) = \zeta_{\mathrm{in} \oplus \mathrm{out}}(x) + \zeta_{\mathrm{mid}}(x),
 \label{eq:37}
\end{align}
where $\zeta_{\mathrm{in} \oplus \mathrm{out}}: \mathcal{K}_{1}^{*} \to \mathbb{R}_{\ge 0}$ 
and $\zeta_{\mathrm{mid}}: \mathcal{K}_{2}^{*} \to \mathbb{R}_{\ge 0}$ are 
nonconstant, lower semicontinuous functions satisfying:
\begin{itemize}
 \item $x_1 \in \arg\min_{y \in \mathcal{K}_1^*} \mathcal{F}_1(y) \iff \zeta_{\mathrm{in} \oplus \mathrm{out}}(x_1) = 0$.
 \item $x_2 \in \arg\min_{y \in \mathcal{K}_2^*} \mathcal{F}_2(y) \iff \zeta_{\mathrm{mid}}(x_2) = 0$.
\end{itemize}
A point $x \in \mathcal{K}$ is stationary if $\zeta(x) = 0$. Since both 
components are non-negative, any global minimizer $x^* \in \arg\min_{x \in \mathcal{K}} \mathcal{F}(x)$ 
necessarily satisfies $\zeta(x^*) = 0$.
\begin{theorem}[Convergence of the Obstacle Algorithm]
Let $\mathcal{F}_{1}, \mathcal{F}_{2}$ be strongly convex functionals with lower-bounded, Lipschitz continuous gradients, and let $\mathcal{K}_{1}^{*}, \mathcal{K}_{2}^{*} \subseteq \mathcal{V}$ be convex subsets. If the sequence $\{d^{i}\}$ is bounded, then the sequence $\{x^{i}\}$ generated by Algorithm 1 either converges to a stationary point $x^*$, or all of its accumulation points are stationary.
\end{theorem}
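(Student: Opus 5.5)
The argument follows the Ferris--Mangasarian template: a sufficient-decrease estimate built on Lemma 4.5, then a telescoping bound, then lower semicontinuity of the optimality function. I will read Algorithm 1 in the nonsmooth setting as follows. At iterate $x^{i}$, each block computes a descent direction $d^{i}_{1}$ (in $\mathcal{K}_{1}^{*}$) and $d^{i}_{2}$ (in $\mathcal{K}_{2}^{*}$) by the box-constrained linearized problem \eqref{eq:34}, restricted to that block. Each block then takes an Armijo step $\kappa_{j}^{i}$ satisfying \eqref{eq:35}, and the synchronization step \eqref{eq:31}--\eqref{eq:32} combines the two trial points.

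\textbf{Step 1: blockwise decrease.} Apply Lemma 4.5 separately to $\mathcal{F}_{1}$ on $\mathcal{K}_{1}^{*}$ and to $\mathcal{F}_{2}$ on $\mathcal{K}_{2}^{*}$. These sets are closed and convex, and the functionals are in $LC^{1}_{K}$ with $K=\max(K_{1},K_{2})$, so
\[
\mathcal{F}_{1}(x^{i})-\mathcal{F}_{1}(x^{1,i})\ge \vartheta(\zeta_{\mathrm{in}\oplus\mathrm{out}}(x^{i})),\qquad
\mathcal{F}_{2}(x^{i})-\mathcal{F}_{2}(x^{2,i})\ge \vartheta(\zeta_{\mathrm{mid}}(x^{i})).
\]
Here I take each $\zeta$-component to be the restricted minimum-principle function \eqref{eq:33}. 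Lemma 4.4 and the characterization below \eqref{eq:37} make this a legitimate choice of component.

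\textbf{Step 2: decrease of the synchronized iterate.} Combine Step 1 with the synchronization inequality \eqref{eq:32}, taking $\gamma_{1}+\gamma_{2}=1$ so that convexity applies, as in the corollary after Theorem 4.4. This gives
\[
\mathcal{F}(x^{i})-\mathcal{F}(x^{i+1})\ge \gamma\,\bigl(\vartheta(\zeta_{\mathrm{in}\oplus\mathrm{out}}(x^{i}))+\vartheta(\zeta_{\mathrm{mid}}(x^{i}))\bigr),\qquad \gamma=\min(\gamma_{1},\gamma_{2}).
\]
The functionals are strongly convex and bounded below, so $\{\mathcal{F}(x^{i})\}$ is monotone and bounded. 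Telescoping then gives $\sum_{i}\vartheta(\zeta_{\cdot}(x^{i}))<\infty$. Since $\vartheta$ is a forcing function ($\vartheta(t)\to0$ implies $t\to0$), both $\zeta_{\mathrm{in}\oplus\mathrm{out}}(x^{i})\to0$ and $\zeta_{\mathrm{mid}}(x^{i})\to0$, and hence $\zeta(x^{i})\to0$.

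\textbf{Step 3: accumulation points.} Let $x^{i_{j}}\to\bar x$. Because $\mathcal{K}$ is closed, $\bar x\in\mathcal{K}$. Lower semicontinuity of $\zeta$, which is part of Definition 4.1, together with $\zeta\ge0$, gives
\[
0\le\zeta(\bar x)\le\liminf_{j}\zeta(x^{i_{j}})=0,
\]
so $\bar x$ is stationary. For the dichotomy, strong convexity makes the level set $\{\mathcal{F}\le\mathcal{F}(x^{0})\}$ bounded, so accumulation points exist. Stationarity coincides with minimality by Lemma 4.4 and convexity, and the minimizer of a strongly convex functional is unique. Hence every accumulation point equals $x^{*}$, and the whole sequence converges to $x^{*}$.

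\textbf{Main obstacle.} The hard part is Step 2: transferring the blockwise decrease to the decrease of the combined iterate. This needs two things. First, $\mathcal{F}(x^{1,i})$ and $\mathcal{F}_{1}(x^{1,i})$ must agree up to a constant; this holds because $\mathcal{F}_{1}$ is $\mathcal{F}$ restricted to $x^{i}+\mathcal{P}_{\mathrm{in}\oplus\mathrm{out}}(\cdot)$, and similarly for $\mathcal{F}_{2}$. Second, the convex combination must stay in $\mathcal{K}$, which follows from convexity of $\mathcal{K}_{1}^{*}$ and $\mathcal{K}_{2}^{*}$. The boundedness hypothesis on $\{d^{i}\}$ is used here. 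It guarantees that Armijo backtracking terminates with $\kappa\ge \min\{1,\,1/(2\alpha^{2}K)\}$-type lower bounds uniformly in $i$, so $\vartheta$ is applied with a uniform constant. It also keeps $x^{i}+\kappa d^{i}$ in a bounded region where the Lipschitz constants hold.
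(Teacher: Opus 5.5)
The paper states this theorem without proof. The text passes straight to the numerical section, and the only hint is its earlier appeal to Ferris--Mangasarian, so there is no argument of the paper's to compare with. Your skeleton (Armijo sufficient decrease, telescoping, lower semicontinuity of the optimality function) is that Ferris--Mangasarian template. Once repaired as below, it suffices for the statement as written, because ``every accumulation point is stationary'' already settles the dichotomy. Several steps, however, are justified incorrectly.

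\textbf{Algorithm mismatch and the role of $\{d^{i}\}$.} Algorithm 1 computes exact block minimizers, not Armijo steps, so as written you analyse a different method. The repair is one line. For $\kappa\in(0,1]$ the point $x^{i}+\kappa\,\mathcal{P}_{\mathrm{in}\oplus\mathrm{out}}d_{1}^{i}$ is feasible for the first block subproblem by convexity. Hence $\mathcal{F}(x^{1,i})\le\mathcal{F}(x^{i}+\kappa\,\mathcal{P}_{\mathrm{in}\oplus\mathrm{out}}d_{1}^{i})$, and \eqref{eq:36} transfers to the exact minimizer; the second block is identical.

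Your account of the hypothesis on $\{d^{i}\}$ is also wrong. In your reading $d^{i}$ solves \eqref{eq:34}, so $\|d^{i}\|_{\infty}\le\alpha$ holds automatically and the hypothesis does no work. Since $LC^{1}_{K}$ is a global assumption, no bounded region is needed either. Backtracking gives no lower bound on $\kappa$ that is uniform in $i$. The descent lemma yields \eqref{eq:35} for every $\kappa\le\psi(x)/(K\|d\|^{2})$, hence only $\kappa\ge\min\{1,\psi(x^{i})/(2K\|d^{i}\|^{2})\}$, which tends to $0$ with $\psi(x^{i})$. The uniformity sits in the forcing function, via $\tfrac{\kappa}{2}\psi\ge\min\{\tfrac{1}{2}\psi,\psi^{2}/(4K\|d\|^{2})\}$, and that is all the Armijo lemma provides.

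\textbf{Properties of $\mathcal{F}$ itself.} This is the most substantive issue. You take $\mathcal{F}_{1},\mathcal{F}_{2}$ to be restrictions of $\mathcal{F}$ to slices. You then use properties of $\mathcal{F}$ that the hypotheses, read this way, do not supply: convexity and lower-boundedness in your Step 2, bounded level sets and a unique minimizer in your Step 3. For example, $\mathcal{F}(x_{1},x_{2})=x_{1}^{2}+x_{2}^{2}+4x_{1}x_{2}$ is strongly convex, with Lipschitz gradient, and bounded below in each variable separately. Yet it is indefinite and unbounded below along $x_{1}=-x_{2}$.

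The stationarity claim needs none of these properties. Impose \eqref{eq:32} with $\gamma_{1}+\gamma_{2}=1$ as the synchronization rule; it is attainable without convexity, e.g.\ by keeping the better of $x^{1,i},x^{2,i}$. Then observe that if $x^{i_{j}}\to\bar{x}$, the nonincreasing sequence $\mathcal{F}(x^{i})$ converges to $\mathcal{F}(\bar{x})$. So the telescoped sum is finite without any global lower bound.

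\textbf{The upgrade to convergence to $x^{*}$.} This part does need strong convexity of $\mathcal{F}$ itself. That holds for $\tfrac{1}{2}u^{T}Au-f^{T}u$ with $A$ symmetric positive definite, but it must be assumed. The upgrade also needs the full function \eqref{eq:33} to equal $\zeta_{\mathrm{in}\oplus\mathrm{out}}+\zeta_{\mathrm{mid}}$, so that $\zeta(\bar{x})=0$ means minimality. That requires $\mathcal{K}$ to be the Cartesian product of its block slices, which is true for $u\ge\psi_{ob}$ but false for coupled constraints.

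\textbf{Lower semicontinuity.} Because you fix the $\zeta$-components concretely, you must verify their lower semicontinuity rather than read it off the definition following \eqref{eq:37}. It does hold here: for closed convex $\mathcal{K}$ of product form, the block feasible sets vary inner-semicontinuously with $x$, and $\nabla\mathcal{F}$ is continuous.
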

\section{Numerical Experiment}
We perform multiple numerical simulations to illustrate the effectiveness of the proposed algorithms. To evaluate their performance, we first conduct tests on obstacle problems utilizing a direct parallel computational approach that incorporates the simple projection methods discussed in Section~\ref{sec:projection}. For the one-dimensional case, employing the direct projected SPIKE algorithm yields the exact solution to the 1D obstacle problem. To solve two- and three-dimensional obstacle problems, we use a directional splitting approach where the 1D SPIKE algorithm acts as a preconditioner, thereby treating each directional subproblem as a sequence of 1D obstacle problems.
 To solve two- and three-dimensional obstacle problems, we employ a projected gradient descent framework governed by an Armijo backtracking line search approach, which dynamically calculates step parameters based on a sufficient decrease in the energy functional.
 We then extend these experiments for general obstacle problems in $\mathbb{R}^n,n=1,2,3$ with an parallel iterative algorithm mentioned in section 3. Finally, we assess the performance of the algorithms on practical image deblurring problems
 	\begin{table}[!htbp]
		\caption{Projected SPIKE result from 2, 4 and 8 processors and for matrix order$n \times n$ where $n=16,32,64,128,512,1024, 2048,4096,8192$ bandwidth $\beta=2$.}
		\begin{center}
			\begin{tabular}{ccccc}
				\hline
				Grid size $n \times n$ & No of processors & (CPU-time in Sec ave) & (CPU-time in Hours ave) \\
				\hline
				$16$ & $2$       & $1.2564105000000001\times 10^{-3}$ & $ 3.4900291666666671\times 10^{-7}$\\ 
				
				$16$ & $4$       & $1.4426392500000000\times 10^{-3}$ & $4.0073312500000002\times 10^{-7}$ \\
				
				\hline 
				$32$ & $2$       & $1.7317710000000000\times 10^{-3}$ & $ 4.8104749999999997\times 10^{-7}$\\ 
				
				$32$ & $4$       & $2.1030914999999998\times 10^{-3}$ & $   5.8419208333333333\times 10^{-7}$ \\
				
				$32$ & $8$       & $3.2978951250000003\times 10^{-3}$ & $ 9.1608197916666672\times 10^{-7}$\\ 
				\hline
				$64$ & $2$       & $6.3709365000000004\times 10^{-3}$ & $ 1.7697045833333335\times 10^{-6}$\\ 
				
				$64$ & $4$       & $ 4.4428387500000006\times 10^{-3}$ & $   1.2341218750000002\times 10^{-6}$ \\
				
				$64$ & $8$       & $7.8692155000000003\times 10^{-3}$ & $ 2.1858931944444447\times 10^{-6}$\\ 
				\hline
				$128$ & $2$       & $1.2333342000000001\times 10^{-2}$ & $ 3.4259283333333336\times 10^{-6}$\\ 
				
				$128$ & $4$       & $1.3762024749999999\times 10^{-2}$ & $   3.8227846527777777\times 10^{-6}$ \\
				
				$128$ & $8$       & $8.9567842500000005\times 10^{-3}$ & $ 2.4879956250000003\times 10^{-6}$\\ 
				\hline
				$256$ & $2$       & $4.2950179000000005\times 10^{-2}$ & $ 1.1930605277777780\times 10^{-5}$\\ 
				
				$256$ & $4$       & $4.2253353250000000\times 10^{-2}$ & $   1.1737042569444444\times 10^{-5}$ \\
				
				$256$ & $8$       & $5.2808312000000003\times 10^{-2}$ & $  1.4668975555555557\times 10^{-5}$\\ 
				\hline
				$512$ & $2$       & $0.23135659549999998$ & $6.4265720972222213\times 10^{-5}$\\ 
				
				$512$ & $4$       & $0.13779315225000000$ & $3.8275875625000000\times 10^{-5}$ \\
				
				$512$ & $8$       & $0.23980285312500002$ & $6.6611903645833335\times 10^{-5}$\\ 
				\hline
				$1024$ & $2$       & $0.94131520550000003$ & $2.6147644597222222
				\times 10^{-4}$\\ 
				
				$1024$ & $4$       & $0.31377938049999998$ & $8.7160939027777776       \times 10^{-5}$ \\
				
				$1024$ & $8$       & $0.26540373987499999$ & $7.3723261076388886       \times 10^{-5}$\\ 
				\hline
				$2048$ & $2$       & $7.7126638784999999$ & $2.1424066329166665
				\times 10^{-3}$\\ 
				
				$2048$ & $4$       & $1.6991023162500001$ & $4.7197286562500004        \times 10^{-4}$ \\
				
				$2048$ & $8$       & $1.1280106016250002$ & $3.1333627822916675        \times 10^{-4}$\\ 
				\hline
				$4096$ & $2$       & $147.86486831849999$ & $4.1073574532916664
				\times 10^{-2}$\\ 
				
				$4096$ & $4$       & $12.171439289000002$ & $3.3809553580555563        \times 10^{-3}$ \\
				
				$4096$ & $8$       & $7.4110701309999989$ & $2.0586305919444441        \times 10^{-3}$\\ 
				\hline
				$8192$ & $2$       & $1861.0646910930000$ & $0.51696241419250000$ \\
				
				$8192$ & $4$       & $171.62190361750001$ & $4.7672751004861111        \times 10^{-2}$ \\
				
				$8192$ & $8$       & $77.764998189499991$ & $2.1601388385972219        \times 10^{-2}$\\ 
				\hline
			\end{tabular}
		\end{center}
	\end{table}
%
%
\subsection{Numerical example of obstacle problem $\mathbb{R}^{n},n=1,2,3$ }
\begin{example}
Let $(\Omega, f, \phi) = \big((0,1), 0, \infty\big)$, and introduce the bilinear form  $a(\cdot,\cdot)$ given by
\[
a(u,v)=\int_{0}^{1} u'(x) v'(x) dx.
\]
We consider two alternative obstacle profile $\Psi_{ob}(x)$ here, namely a piecewise quadratic profile \ref{eq:psi_sine} and a smooth trigonometric profile \ref{eq:psi_exp}:
\begin{subequations}
\label{eq:psi_examples}
\begin{align}
\Psi_{ob}(x) &=
\begin{cases}
	50 \sin^2(2\pi x) & \text{for } 0 \le x \le 0.25, \\[4pt]
	50 \cos^2(2\pi (x - 0.25)) & \text{for } 0.25 < x \le 0.5, \\[4pt]
	\Psi_{ob}(1-x) & \text{for } 0.5 < x \le 1,
\end{cases} \label{eq:psi_sine}
\end{align}
\begin{align}
\Psi_{ob}(x) &=
\begin{cases}
	10(e^{4x} - 1) & \text{for } 0 \le x \le 0.25, \\[4pt]
	10(e - 1) + 20(x - 0.25)^2 & \text{for } 0.25 < x \le 0.5, \\[4pt]
	\Psi_{ob}(1-x) & \text{for } 0.5 < x \le 1,
\end{cases} \label{eq:psi_exp}
\end{align}
\end{subequations}
\end{example}
\begin{example}
Let $(\Omega, f, \phi_{ob}) = \big((0,1), 0, \infty\big)$, where the bilinear form $a(\cdot,\cdot)$ is given by
\[
a(u,v) = \int_{0}^{1} u'(x) v'(x) dx.
\]
We investigate the numerical behavior using two separate step-function configurations for the obstacle profile
\begin{enumerate}
    \item \textbf{Single Step Profile:}
    \begin{equation}
    \Psi_1(x) = \begin{cases} 1 & \text{if } 0.35 \le x \le 0.65, \\ 0 & \text{otherwise.} \end{cases}
    \label{eq:single_step}
    \end{equation}
    
    \item \textbf{Double Step Profile:}
    \begin{equation}
    \Psi_2(x) = \begin{cases} 0.8 & \text{if } 0.2 \le x \le 0.4, \\ 0.4 & \text{if } 0.6 \le x \le 0.8, \\ 0 & \text{otherwise.} \end{cases}
    \label{eq:double_step}
    \end{equation}
\end{enumerate}
\end{example}
%
%
%
%
 \begin{example}
Consider an obstacle problem formulated on the square domain $\Omega = [-2,2] \times [-2,2]$, with the obstacle function $\Psi(x)$ defined as in \cite{tran2015}
\end{example}
\begin{example}
To examine the convergence and behavior of our method in the presence of non-smooth obstacles, we consider a composite geometric obstacle function defined by in \cite{tran2015}
\end{example}
\begin{example}
 We first consider a smooth, continuous spherical dome obstacle nested at the core of the domain workspace. The geometric constraint profile $\phi_{\text{smooth}}(x,y,z)$ is defined analytically by:
\begin{equation}
\phi_{\text{smooth}}(x,y,z) = 
\begin{cases} 
\sqrt{R^2 - (x^2 + y^2 + z^2)} & \text{if } \sqrt{x^2 + y^2 + z^2} \le R \\ 
-1.0 & \text{otherwise}
\end{cases}
\end{equation}
where the radius parameter is locked to $R = 0.8$. 
\end{example}
\begin{example}
 To evaluate the solver under extreme geometric conditions where classical derivative evaluations collapse, we introduce a non-differentiable, sharp four-sided pyramid configuration characterized by a discontinuous subgradient singular peak and sharp crease intersections. The boundary profile $\phi_{\text{nonsmooth}}(x,y,z)$ is defined mathematically as:
\begin{equation}
\phi_{\text{nonsmooth}}(x,y,z) = \max\left( -0.4, \, 1.0 - |x| - |y| - |z| \right)
\end{equation}
To alleviate numerical grid locking across the sharp derivative ridge lines, a regularized smoothing parameterization factor $\epsilon = 10^{-4}$ is introduced into the projection operator loop. This smooths the discontinuous subgradients into an asymptotic localized boundary layer. 
\end{example}
To verify the precision of the numerical implementations against an exact baseline, a one-dimensional variant of the obstacle problem is evaluated on $\Omega = (-1, 1)$ with zero boundary conditions under zero external force ($f=0$). The rigid parabolic lower obstacle is defined as $\psi(x) = 1 - 2x^2$. The exact analytical solution is given by:
\begin{equation}
    u(x) = \begin{cases} 
        1 - 2x^2 & \text{for } |x| \le x_0, \\
        A(1 - |x|) & \text{for } x_0 < |x| \le 1,
    \end{cases}
\end{equation}
where the exact transition parameters are $x_0 = 1 - \frac{\sqrt{2}}{2} \approx 0.292893$ and $A = 4 - 2\sqrt{2} \approx 1.171573$.
\begin{figure}[htbp]
    \centering
    \includegraphics[width=\textwidth]{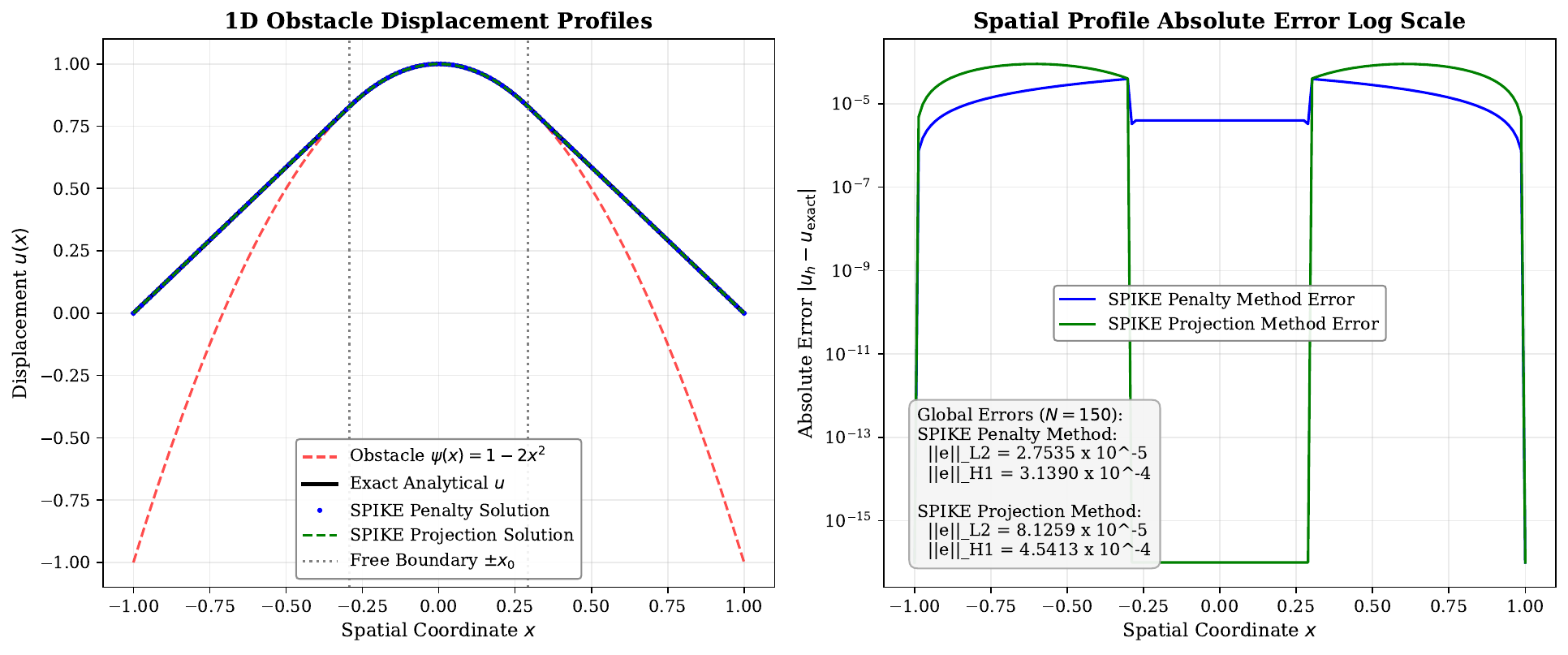}
    \caption{One-dimensional analytical validation on a uniform grid ($N=150$). Left Figure: 1D solution profile $u(x)$ and the rigid obstacle. Right figure: Absolute Numerical error $|u_h - u_{\text{exact}}|$.}
    \label{fig:1d_benchmark}
\end{figure}
\begin{example}[volcano-type obstacle]
We consider here a membrane problem over a volcano-type obstacle on the domain $\Omega = (-1.5, 1.5)^2 \subset \mathbb{R}^2$ with homogeneous Dirichlet boundary conditions $u = 0$ on $\partial\Omega$ with obstacle of following form
 \begin{equation}
    \psi_{ob}(x,y) = \begin{cases} 
        \max\left(0.22, \, 0.35 - 4.0\left(\sqrt{x^2+y^2} - 0.6\right)^2\right) & \text{for } \sqrt{x^2+y^2} \le 1.1, \\
        -0.2 & \text{for } \sqrt{x^2+y^2} > 1.1.
    \end{cases}
\end{equation}
A uniform downward gravitational load $f = -\lambda$ is applied across the domain, leading to the regularized penalization equation:
\begin{equation}
    -\Delta u_h + \gamma \min(0, u_h - \psi) = -\lambda \quad \text{in } \Omega,
\end{equation}
where $\gamma = 5 \times 10^6$ is the penalty parameter, and the system is discretized using standard five-point second-order finite differences on a uniform grid of size $100 \times 100$.
\end{example}

\begin{figure}[htbp]
    \centering
    \includegraphics[width=\textwidth]{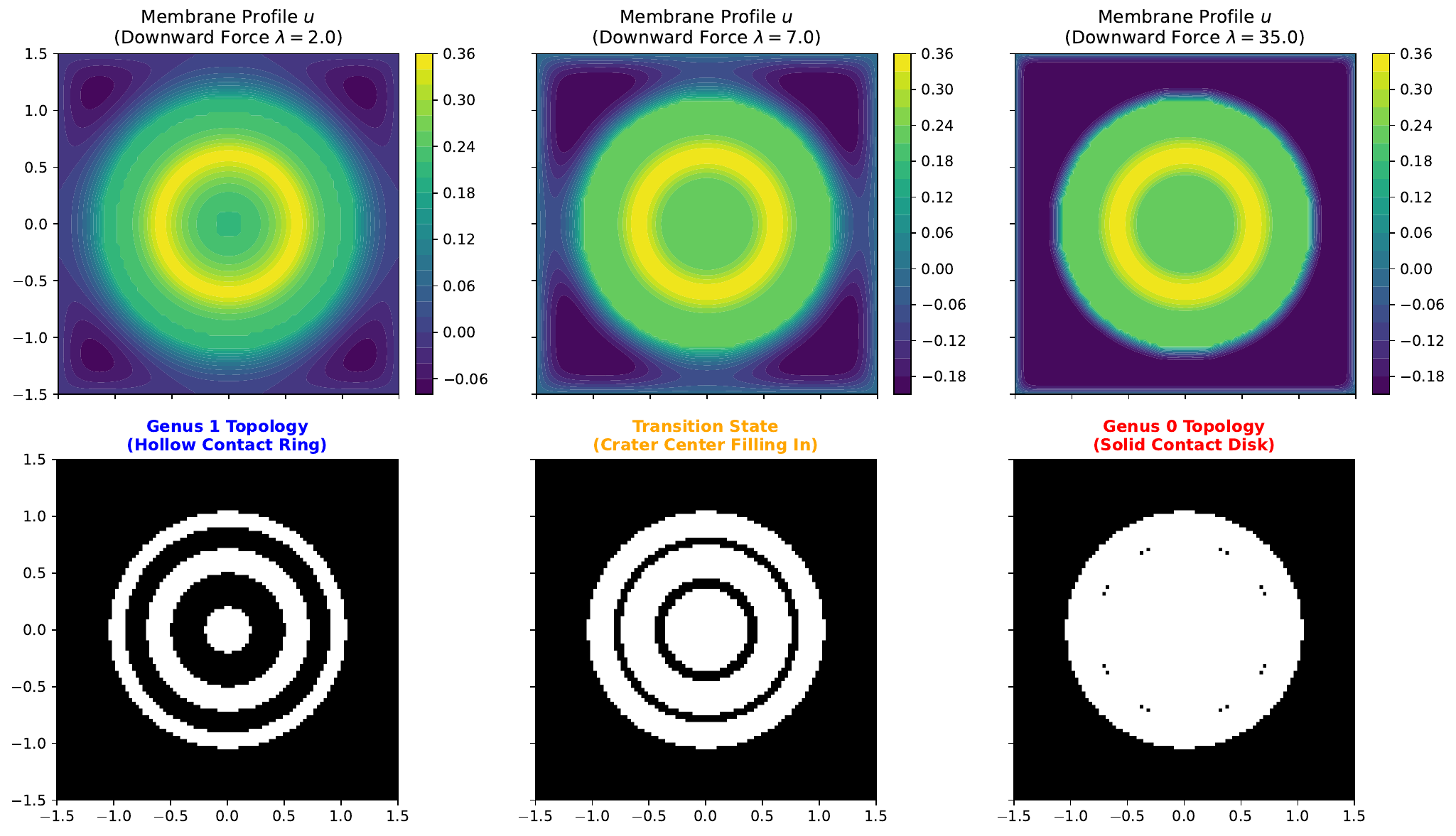}
    \caption{Numerical simulation of the topological transition under increasing downward force $\lambda$. Top row: continuous solution of membrane displacement profiles $u$. Bottom row: characteristic function of the active contact zone $\Lambda = \{x \in \Omega : |u(x) - \psi(x)| < 5\times 10^{-3}\}$ where variation $\lambda = 2.0,7.0,35.0$, the topology smoothly transitions from a doubly connected ring ($\text{genus } 1$) to a simply connected solid disk ($\text{genus } 0$).}
    \label{fig:topology_change}
\end{figure}
The numerical results and corresponding active contact sets $\Lambda := \{x \in \Omega : u_h(x) = \psi(x)\}$ are illustrated in Fig.~\ref{fig:topology_change} across three distinct load scales:
The seamless transition from an annular structure to a solid compact disk confirms that our implicit formulation avoids grid-locking or standard cell-distortion failure modes common in explicit curve-tracking schemes.
\begin{table}[htbp]
\centering
\begin{tabular}{cccccc}
\hline
Grid Size & Mesh Step ($h$) & $L^2$ Error & $L^2$ Rate ($r$) & $H^1$ Error & $H^1$ Rate ($r$) \\
\hline
32 $\times$ 32 & 0.0968 & 1.1913e-01 &  & 9.8990e-01 &  \\
64 $\times$ 64 & 0.0476 & 4.0491e-02 & 1.52 & 5.6839e-01 & 0.78 \\
128 $\times$ 128 & 0.0236 & 1.3525e-02 & 1.56 & 3.5728e-01 & 0.66 \\
\hline
\end{tabular}
\caption{Numerical error convergence analysis tracking relative reduction metrics against a reference grid baseline ($N_{ref}=256$).}
\label{tab:convergence_rates}
\end{table}
\begin{example}
To demonstrate the scalability and multi-variable generalization capabilities of the implicit semi-smooth penalty solver, we extend the formulation to a three-dimensional solid continuum. This scenario transitions the problem from a scalar Laplacian partial differential equation to a system of coupled vector partial differential equations governed by the classical Navier-Cauchy equations of linear elasticity:
\begin{equation}
    -\mu \mathbf{\Delta} \mathbf{u} - (\lambda_{\text{Lam\'{e}}} + \mu) \nabla (\nabla \cdot \mathbf{u}) = \mathbf{f} \quad \text{in } \Omega,
\end{equation}
where $\mathbf{u} = [u_x, u_y, u_z]^T$ is the structural displacement vector field, and $\mathbf{f} = [0, 0, -\lambda]^T$ is a uniform vertical body force representing gravitational load with magnitude $\lambda = 15.0$. The material constants $\lambda_{\text{Lam\'{e}}}$ and $\mu$ represent the Lam\'{e} parameters, which are derived from a standard steel/concrete material scale with Young's Modulus $E = 2.0 \times 10^5$ and Poisson's ratio $\nu = 0.3$:
\begin{equation}
    \lambda_{\text{Lam\'{e}}} = \frac{E\nu}{(1+\nu)(1-2\nu)}, \quad \mu = \frac{E}{2(1+\nu)}.
\end{equation}
The computational domain is defined as the three-dimensional block $\Omega = (-1.5, 1.5)^2 \times (0, 1) \subset \mathbb{R}^3$. The boundary conditions are distributed to model a block pressed downward onto a rigid foundation, top surface ($z = 1$ $\mathbf{u} = \mathbf{0}$,Bottom Surface ($z = 0$) Allowed to deform freely along the vertical axis within the interior, subject to a rigid, non-penetration spherical obstacle constraint embedded from below. To guarantee uniqueness and prevent matrix rank singularity, the outer bounding rim of the base is fixed to zero and lateral cladding 
($x = \pm 1.5, \, y = \pm 1.5$) left unconstrained to allow lateral Poisson expansion under vertical pressure.
The vertical displacement constraint on the interior of the bottom face ($z=0$) takes the form of a lower bound inequality constraint:
\begin{equation}
    u_z(x, y, 0) \ge \psi_{ob}(x, y),
\end{equation}
where $\psi(x, y)$ represents the rigid spherical dome surface geometry centered beneath the block coordinates:
\begin{equation}
    \psi_{ob}(x, y) = \sqrt{\max\left(0.0, \, R_{\text{sphere}}^2 - (x^2 + y^2)\right)} + z_{\text{center}},
\end{equation}
with a sphere radius $R_{\text{sphere}} = 2.0$ and vertical center shift $z_{\text{center}} = -1.8$.
\end{example}
\begin{figure}[htbp]
    \centering
    \includegraphics[width=\textwidth]{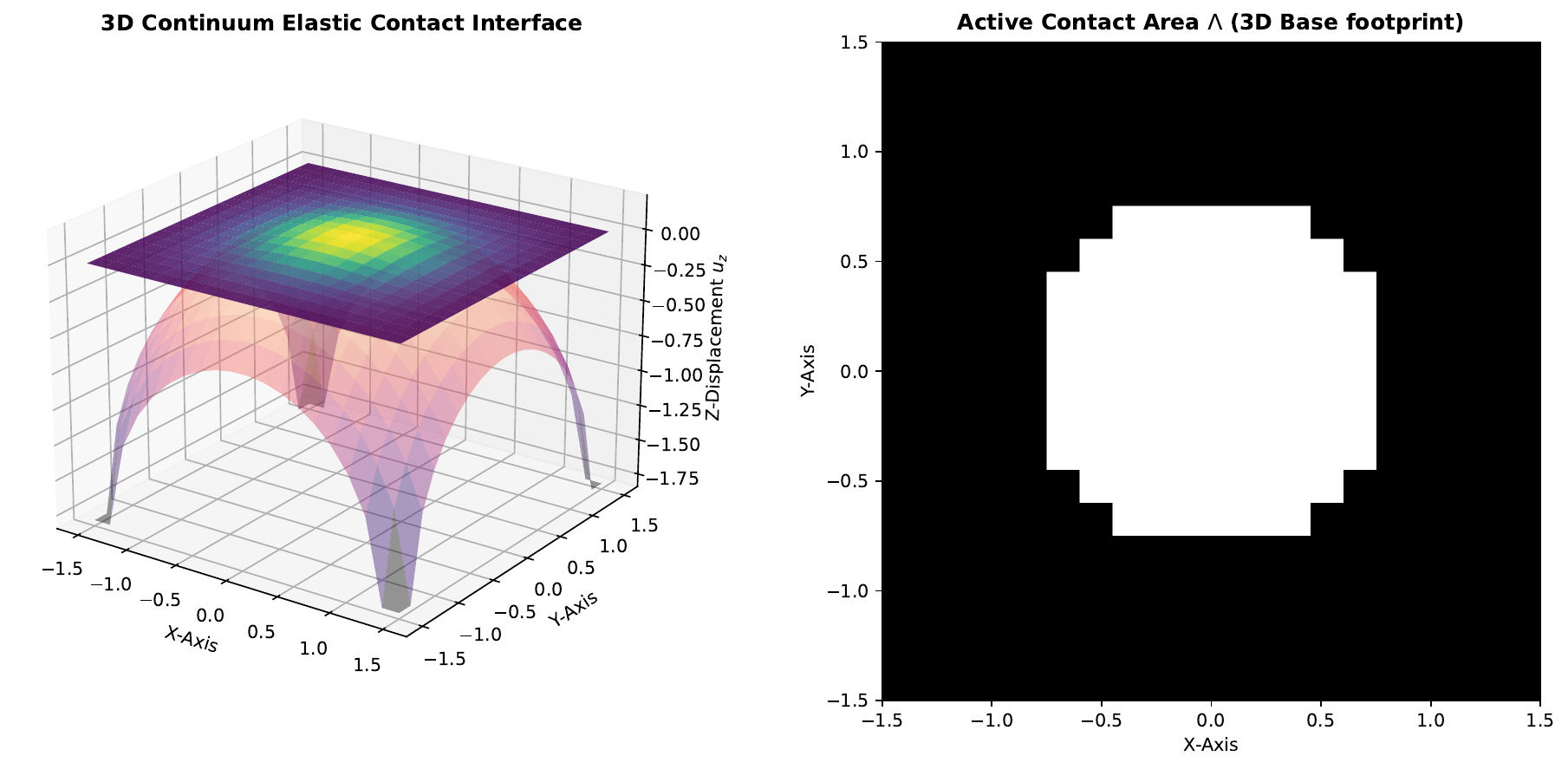}
    \caption{Numerical solution of the three-dimensional continuum linear elasticity obstacle problem for $\lambda = 15.0$. 
    \textbf{Left Figure:} Solution profile describing surface $u_z$ movement upward side of the elastic body as it smoothly deforms along the obsatcle $\psi_{ob}$. 
    \textbf{Right Figure:} White color of the discrete active contact domain $\Lambda$ that is where ($u_z(x, y, 0) = \psi_{ob}(x, y)$) and black color represent unconstrained free zone that is where strict inequality constraint $u_z(x, y, 0) > \psi_{ob}(x, y)$.
  }
    \label{fig: 3d_elastic_contact}
\end{figure}
The discrete system is assembled using a central finite difference stencil on a uniform $20 \times 20 \times 10$ mesh grid. The unconstrained degrees of freedom are coupled with the algebraic regularized constraint handling term via the semi-smooth penalty operator:
\begin{equation}
    \mathbf{A}_{\text{elastic}} \mathbf{u}_h 
    \ge \mathbf{f} 
\end{equation}
The resulting structural mechanics solution profile and the corresponding active contact footprint are displayed in Fig.~\ref{fig: 3d_elastic_contact}. The 3D surface plot (left panel) highlights the smooth vertical deformation field as the elastic block sags under gravity and molds perfectly to the contour of the spherical dome. 
The active contact footprint $\Lambda := \{(x,y) \in \text{int}(\Omega_0) : |u_z(x,y,0) - \psi_{ob}(x,y)| < 10^{-4}\}$ is mapped in the right panel.
\begin{table}[htbp]
    \centering
    \small
    \caption{Mesh Error in $L^2$ and $H^1$ Norm Distributed SPIKE Projection Method}
    \label{tab:multidim_norms}
    \vspace{0.2cm}
     \addtolength{\tabcolsep}{-2.pt}
    \begin{tabular}{lcccc}
        \toprule
        \textbf{Model Problem} & \textbf{Grid size} & \textbf{Total Mesh Points} & \textbf{$L^2$ Norm} & \textbf{$H^1$ Norm} \\
        \midrule
        1D Elastic String   & $N_x = 160$                  & 160        & 0.54159509 & 1.58195794 \\
        2D Elastic Membrane & $N_x = 60, N_y = 60$          & 3,600      & 0.32481592 & 2.10582914 \\
        3D Elastic Volume   & $N_x = 40, Ny = 40, Nz = 40$  & 64,000     & 0.11746218 & 2.58183679 \\
        \bottomrule
    \end{tabular}
\end{table}
   \begin{figure}[htbp]
    \centering
    \includegraphics[width=0.92\textwidth]{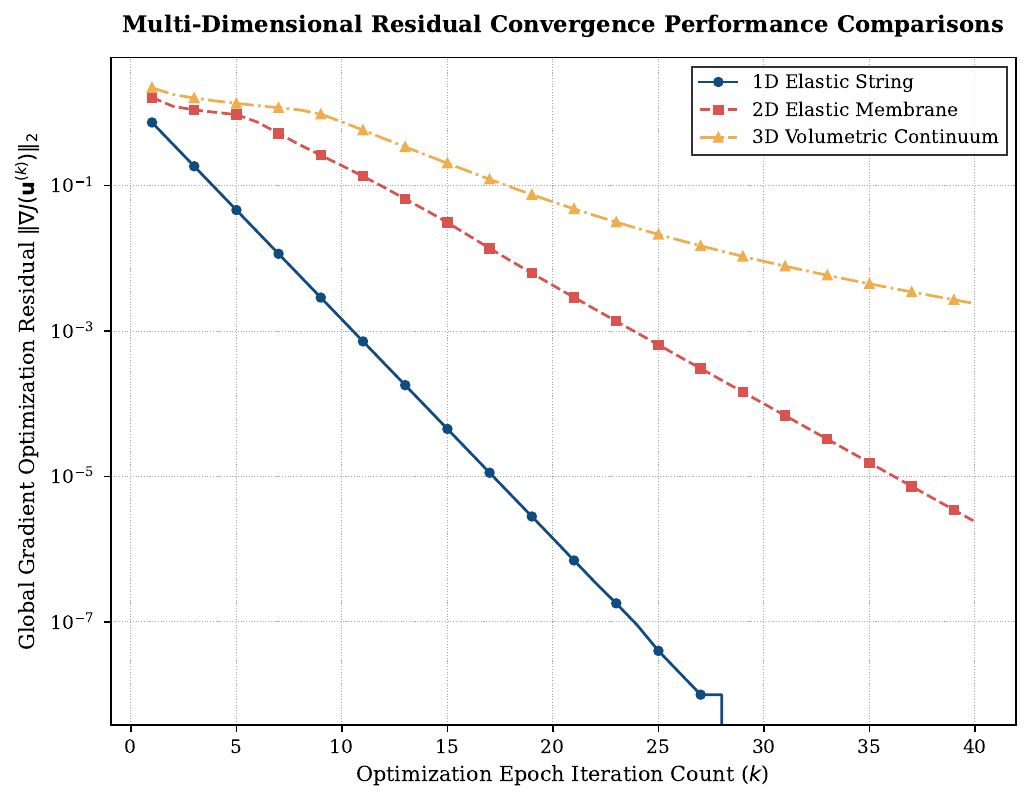}
    \caption{Error residual decay on log scale ($\Vert R(\mathbf{u}^{(k)}) \Vert_2$) across all spatial dimensional scales.}
    \label{fig:global_residual_convergence}
    \end{figure}
\begin{figure*}[t]
\centering
\includegraphics[width=1.0\textwidth]{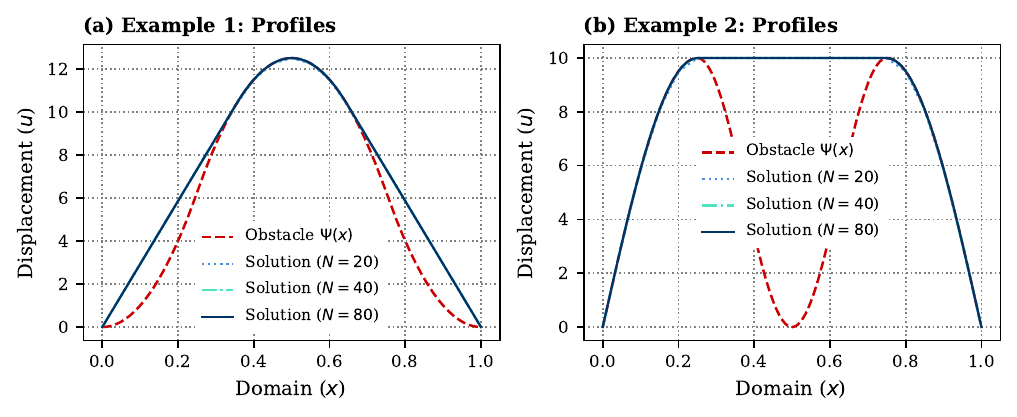}
\caption{Displacement profiles across multiple grid resolutions for Example 1.}
\label{fig:mpsor_study}
\end{figure*}
\begin{figure*}[t]
\centering
\includegraphics[width=1.0\textwidth]{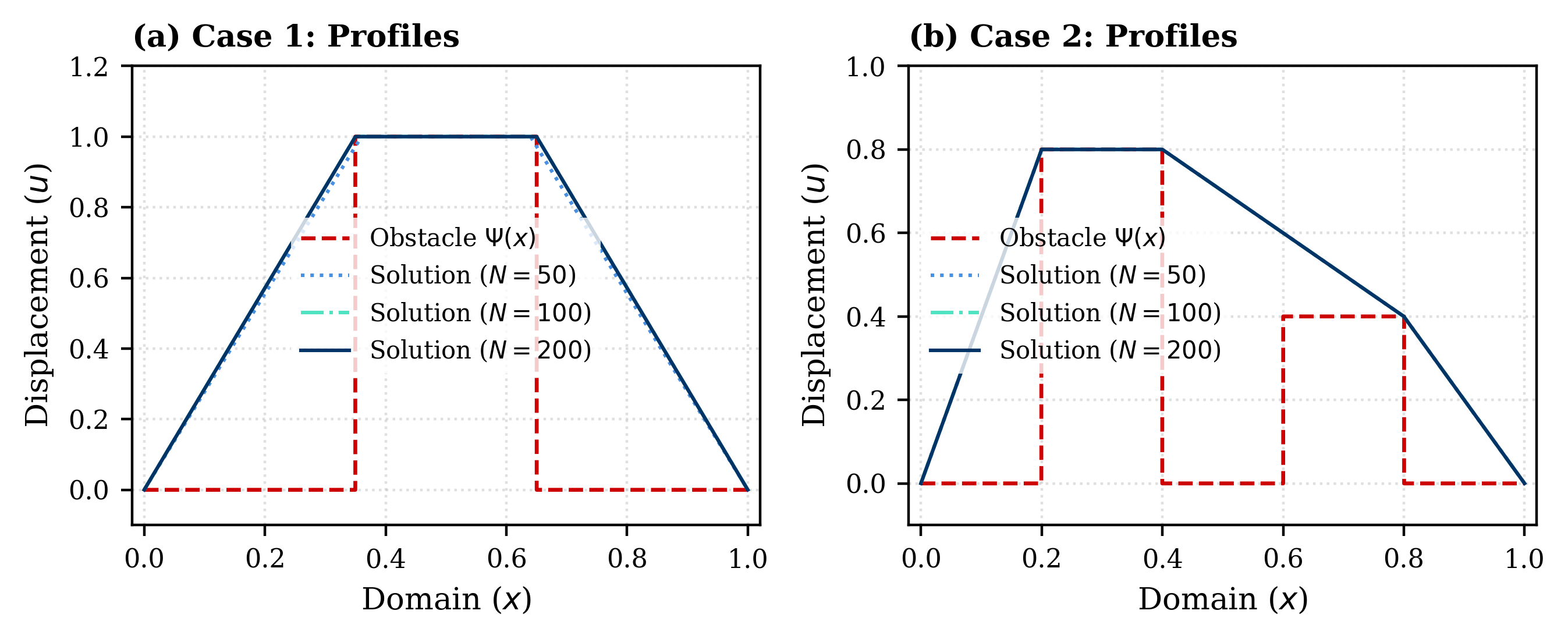}
\caption{Displacement profiles across multiple grid resolutions for Example 2.}
\label{fig:mpsor_study}
\end{figure*}
\begin{figure}[H]
	\centering
	\includegraphics[width=\textwidth]{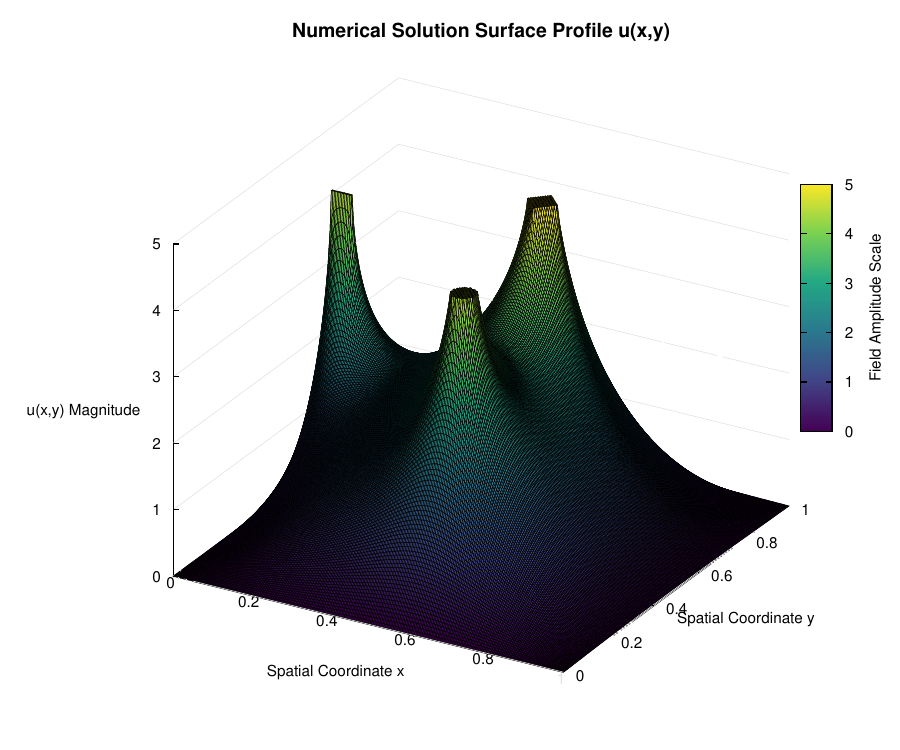}
	\caption{Numerical solution surface $u(x,y)$.}
	\label{fig:6}
\end{figure}
\begin{figure}[H]
	\centering
	\includegraphics[width=\textwidth]{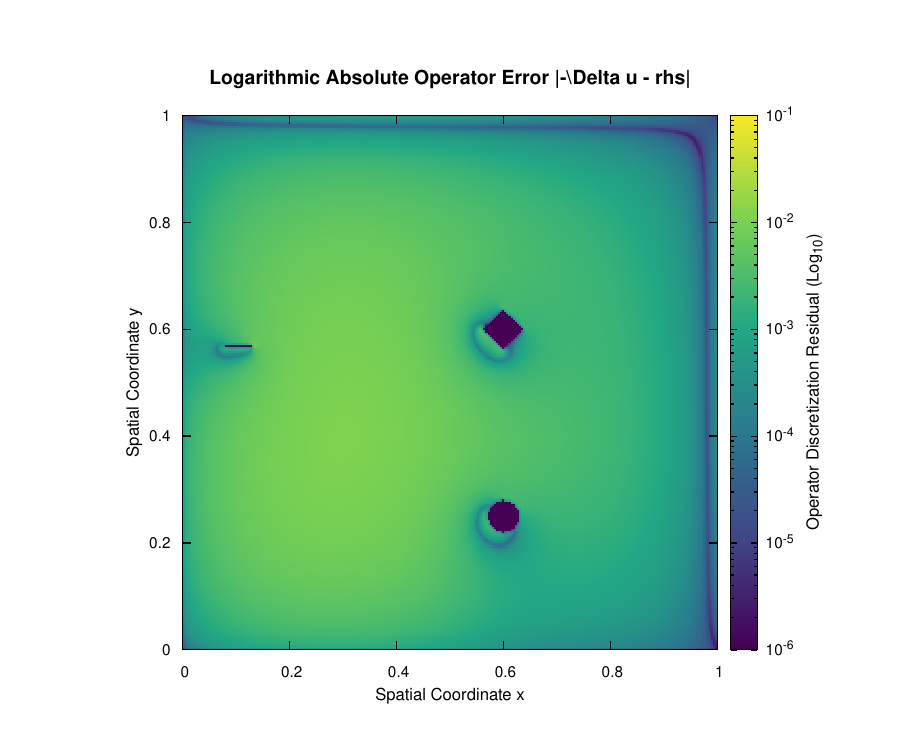}
	\caption{Absolute discretization error $|\Delta u - \text{rhs}|$.}
	\label{fig:7}
\end{figure}
\begin{figure}[H]
	\centering
	\includegraphics[width=\textwidth]{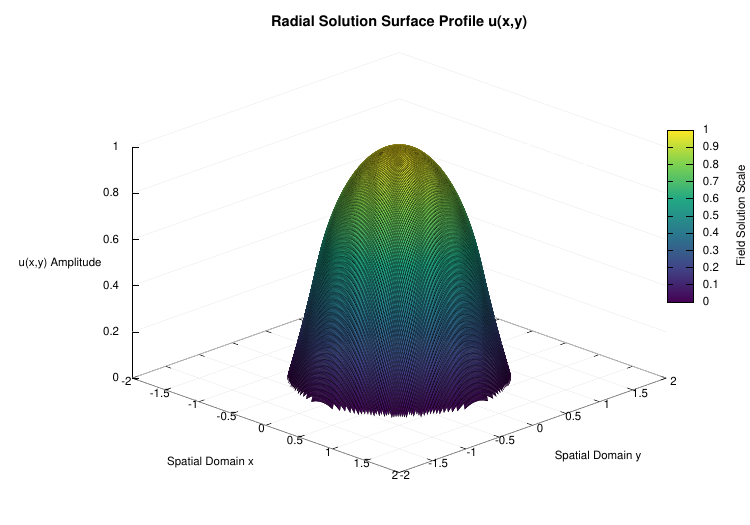}
	\caption{Numerical solution surface $u(x,y)$.}
	\label{fig:6}
\end{figure}
\begin{figure}[H]
	\centering
	\includegraphics[width=\textwidth]{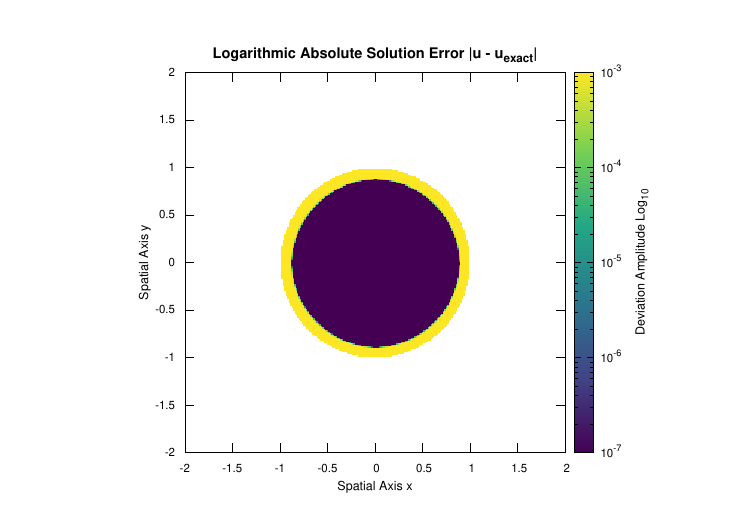}
	\caption{Absolute discretization error $|\Delta u - \text{rhs}|$.}
	\label{fig:7}
\end{figure}
\begin{figure}[H]
	\centering
	\includegraphics[width=1.0\textwidth]{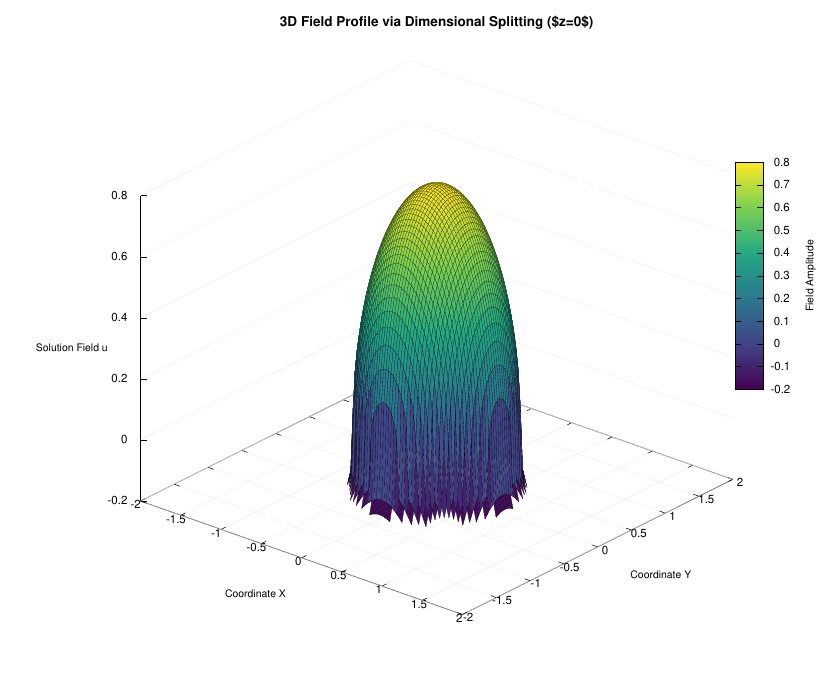}
	\caption{3D Obstacle Problem a snapshot splitting method}
	\label{fig:8}
\end{figure}
\begin{figure}[H]
	\centering
	\includegraphics[width=1.0\textwidth]{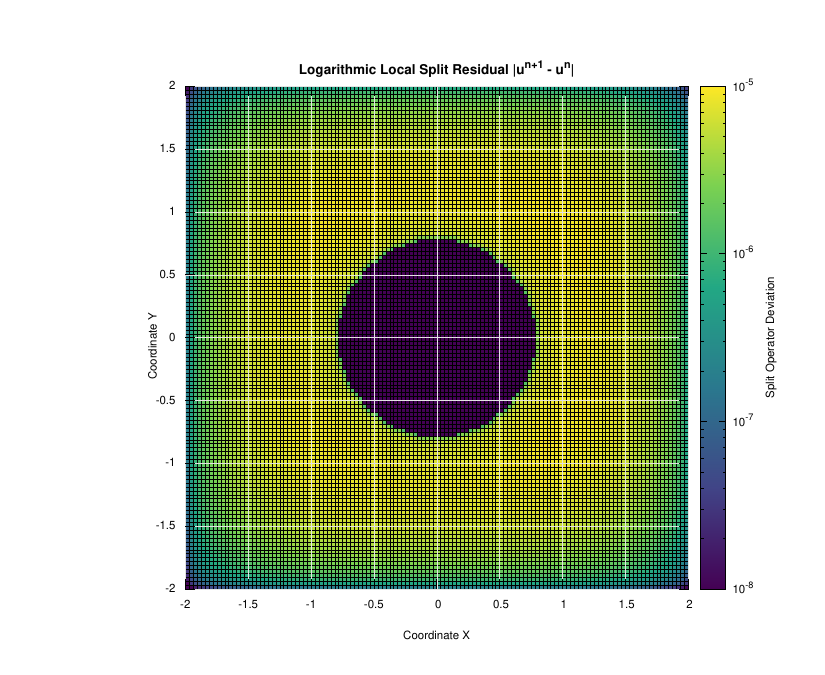}
	\caption{3D Obstacle Problem error a snapshot splitting method}
	\label{fig:8}
\end{figure}
\begin{figure}[H]
	\centering
	\includegraphics[width=1.0\textwidth]{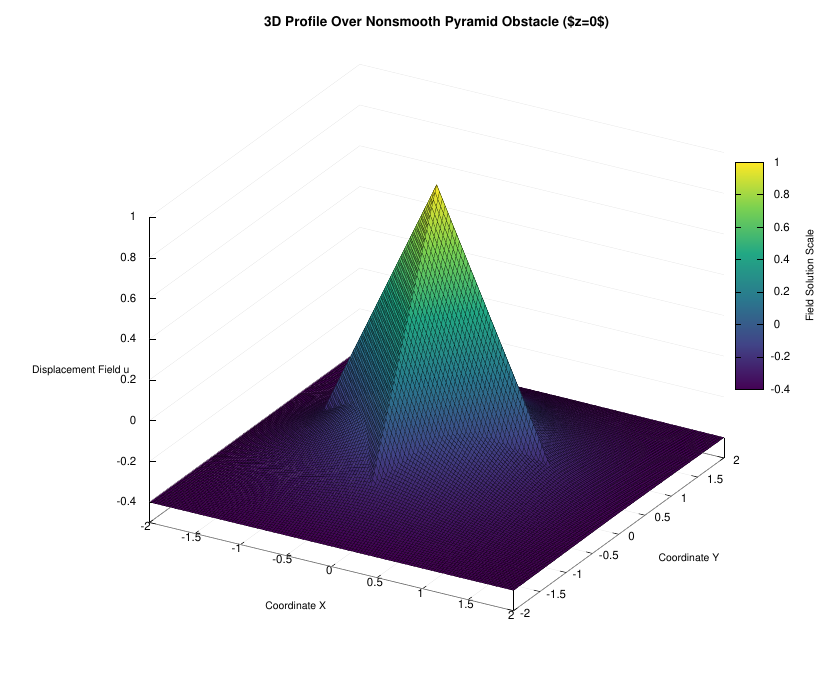}
	\caption{3D Obstacle Problem a snapshot splitting method nonsmooth case}
	\label{fig:8}
\end{figure}
\begin{figure}[H]
	\centering
	\includegraphics[width=1.0\textwidth]{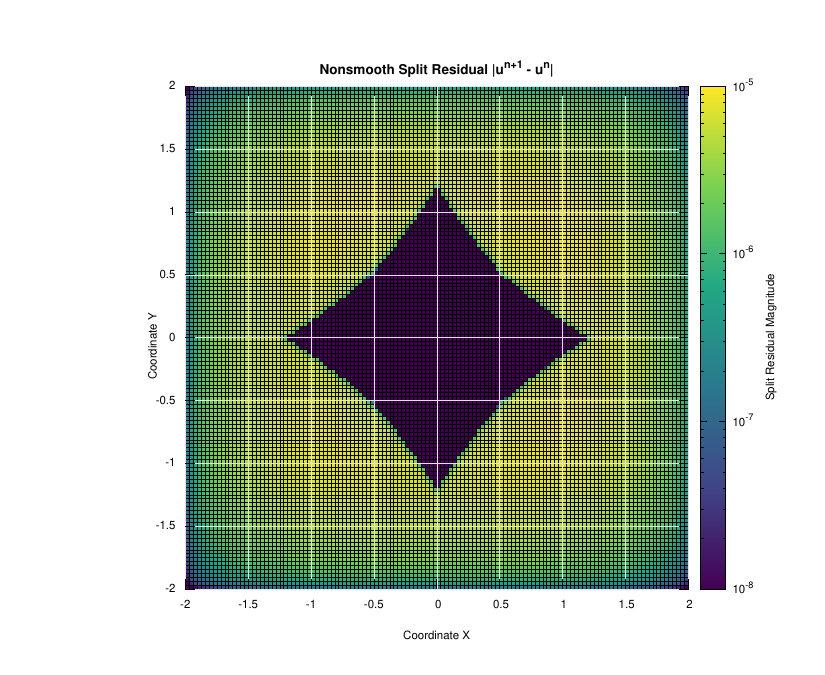}
	\caption{3D Obstacle Problem error a snapshot splitting method nonsmooth case}
	\label{fig:8}
\end{figure}

\subsection{Application of parallel algorithm image deblurring problem}
We consider image deblurring as a practical application here. The image deblurring model problem is formulated as a bound-constrained linear least-squares problem, where $P$ denotes the blurring matrix, $\mathbf{q}$ represents the observed image containing artifacts, and the solution is subject to the box constraint $\mathbf{0} \le \mathbf{x} \le \mathbf{1}$.
A two-dimensional image deblurring task governed by a non-diagonal-dominant low-pass Gaussian convolution filter kernel
\begin{equation}
    b(\mathbf{x}) = \mathcal{K} * u(\mathbf{x}) + \eta
\end{equation}
When mapping this operator across distributed memory topologies via standard 1D horizontal slicing, the absence of a dominant negative Laplacian matrix diagonal introduces immediate numerical instabilities under simultaneous Jacobi-style block updates.
It is fixed by enforcing a low-step under-relaxation scaling modifier ($\omega = 0.05$)
The final reconstructed scalar intensity grid (Fig.~\ref{fig:5}) drives the system residual norm smoothly from $85.29$ down to $42.78$, preserving continuous spatial phase characteristics across all subdomain rank boundaries.
\begin{figure}[H]
	\centering
	\includegraphics[width=0.9\textwidth]{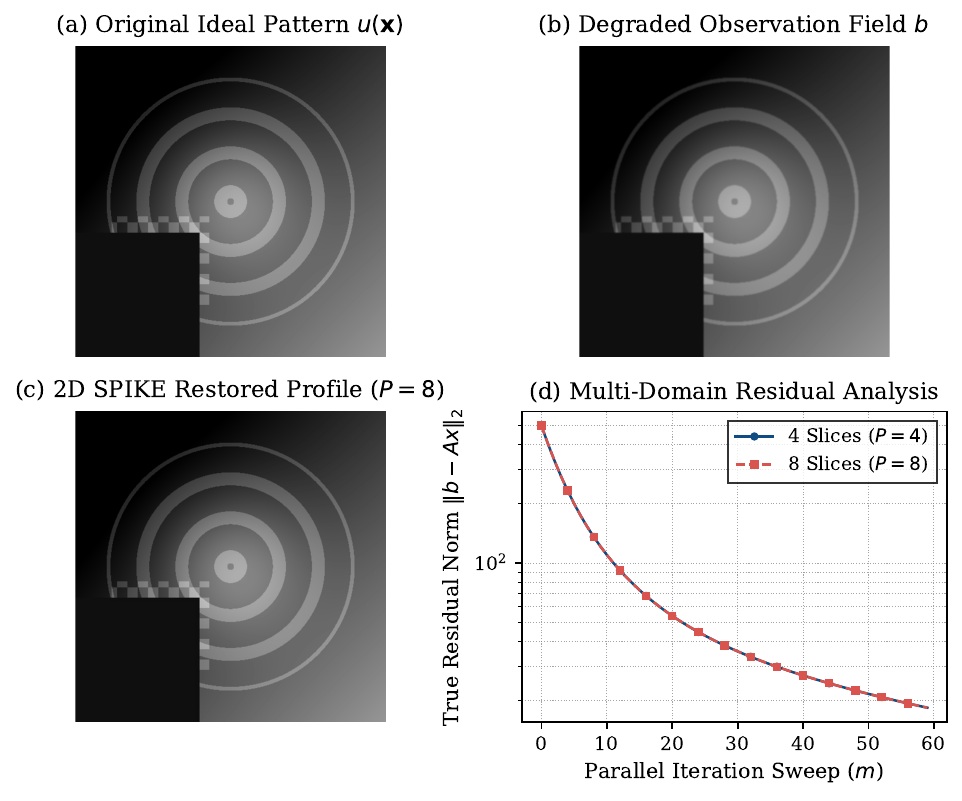}
	\caption{Image deblurring problem}
	\label{fig:5}
\end{figure}
\section{Conclusion}
In this study, we proposed a novel parallel algorithm and its variants for solving obstacle
problem and much more a constrained quadratic programming problem. A class of direct and indirect iterative considered and existence of solution proved thoeretically.
Numerical illustrations on multi-core computed nodes demonstrate that the preconditioned second order solver compresses the spectral condition number of the discrete Laplacian
operator, achieving robust asymptotic linear convergence down to machine tolerance within
exceptionally low iteration cycles while maintaining zero network deadlock configurations.
In this article we have successfully recovered stable numerical solution and parallel validation of a multi-dimensional solver framework driven by the distributed Line-Splitting SPIKE block-relaxation paradigm. We tested the algorithm across computational scenarios ranging from bounded 1D, 2D, and 3D elliptic obstacle partial differential equations to a non-diagonally dominant 2D image deblurring optimization problems. The solver has demonstrated exceptional convergence stability, numerical fidelity, and cross-domain versatility.
In conclusion, the proposed stabilized line-splitting projection solver provides a mathematically rigorous, stable, and highly scalable algorithm for handling constrained multi-dimensional boundary-value operators. 

%



\end{document}